\def\paragraph{
	\@startsection{paragraph}{4}
	\z@{.5\linespacing\@plus.7\linespacing}{-.5em}%
	{\normalfont\itshape}}
\newcommand*{\lfaktor}[2]{
  \raisebox{-0.5\height}{\ensuremath{#1}}
  \mkern-2mu\backslash\mkern-3mu
  \raisebox{0.5\height}{\ensuremath{#2}}
}
\newcommand*{\sdfaktor}[3]{
  \raisebox{-0.5\height}{\ensuremath{#1}}
  \mkern-2mu\backslash\mkern-3mu
  \raisebox{0.5\height}{\ensuremath{#2}}
  \mkern-3mu\slash\mkern-2mu
  \raisebox{-0.5\height}{\ensuremath{#3}}
}
\renewcommand{\restriction}{\mathord{\upharpoonright}}
\newcommand{\dif}{\,\mathrm{d}}
\DeclareMathOperator*{\Id}{Id}
\DeclareMathOperator*{\Ad}{Ad}
\DeclareMathOperator*{\vol}{vol}
\DeclareMathOperator*{\Gal}{Gal}
\DeclareMathOperator*{\diag}{diag}
\DeclareMathOperator*{\rank}{rank}
\DeclareMathOperator*{\esssup}{ess\,sup}
\DeclareMathOperator*{\essinf}{ess\,inf}
\newtheorem{thm}{Theorem}
\numberwithin{thm}{section} 
\newtheorem*{thm*}{Theorem}
\newtheorem{lem}[thm]{Lemma}
\newtheorem{prop}[thm]{Proposition}
\newtheorem{cor}[thm]{Corollary}
\theoremstyle{definition}
\newtheorem{defi}[thm]{Definition}
\newtheorem{exmpl}[thm]{Example}
\theoremstyle{remark}
\newtheorem{remark}[thm]{Remark}
\title{Large Deviations and Effective Equidistribution}
\author[I. Khayutin]{Ilya Khayutin}
\address{The Einstein Institute of Mathematics\\
	Edmond J. Safra Campus, The Hebrew University of Jerusalem, Givat Ram,
	Jerusalem, 9190401, Israel}
\begin{document}

\setcounter{tocdepth}{1}

\begin{abstract}
We study large deviations for measurable averaging operators on state spaces of dynamical systems.
Our main motivation is the Hecke operators on the modular curve $Y_0(p^n)$ and their generalization to higher rank $S$-arithmetic quotients. 

We prove a relatively sharp large deviations result in terms of the norm of the averaging operator restricted to the orthogonal complement of the constant functions in $L^2$. In the self-adjoint case this norm is expressible by the spectral gap.

Developing ideas of Linnik and Ellenberg, Michel and Venkatesh, we use this large deviation result to prove an effective equidistribution theorem on a state space.
The novelty of our results is that they apply to measures with sub-optimal bounds on the mass of Bowen balls.

We present two new applications to our effective equidistribution result. The first one is effective rigidity for the measure of maximal entropy on $S$-arithmetic quotients with respect to a semisimple action in a non-archimedean place. Measures having large enough metric entropy must also be close on the state space to the Haar measure.
This is a partial extension of a recent result of R\"{u}hr to a significantly more general setting. 

The second one is non-escape of mass for sequences of measures having large entropy with respect to a semisimple element in a non-archimedean place. This generalizes similar known results for real flows. Our methods differ from the methods used by R\"{u}hr and in the previously known non-escape of mass results.
\end{abstract}
\maketitle

\bgroup
\hypersetup{linkcolor = blue}
\tableofcontents
\egroup

\section{Introduction}
\subsection{State Spaces and Averaging Operators}
In this paper we deal with state spaces for measurable dynamical system. A dynamical systems for us is  a measure space $(Y,\mathcal{Y},m)$ equipped with a not necessarily invertible measure-preserving transformation $S\colon Y\to Y$. A state space for the system $Y$ is a measurable space $(X,\mathcal{X})$ with a measurable map $\pi\colon (Y,\mathcal{Y})\to(X,\mathcal{X})$. The space $X$ has a natural associated measure $\pi_*m$ -- the pushforward of $m$; yet the action of $S$ does not necessarily descend to $X$.

One example to keep in mind is Markov shifts on  finite alphabets and the associated directed graphs.

We are mainly interested  in a family of dynamical systems arising in the $S$-arithmetic setting. Let $\mathbf{G}$ be a reductive linear algebraic group defined over $\mathbb{Q}$. Let $S$ be a finite set of places of $\mathbb{Q}$ including $\infty$ such that $G_S\coloneqq \prod_{v\in S} \mathbf{G}(\mathbb{Q}_v)$ is non-compact. Fix $\Gamma<G_S$ a congruence lattice and set $Y\coloneqq\lfaktor{\Gamma}{G_S}$. 

Let $p\in S$ be a \emph{finite} place and $a\in G_p\coloneqq\mathbf{G}(\mathbb{Q}_p)$ a semisimple element not belonging to a compact group. 
The space $Y$ is equipped with a unique Borel probability measure invariant under the right action of $G_S$ -- this is the Haar measure $m$. The dynamical system we study is $Y$ equipped with the right action by $a$. A compact subgroup $K<G_S$ defines a state space $X\coloneqq\faktor{Y}{K}$ for this dynamical system.

Lets present an important example which we address in this paper.
The space $X\coloneqq\lfaktor{\mathbf{PGL}_2(\mathbb{Z})}{\mathbf{PGL}_2(\mathbb{R})}$ can be seen as a quotient by a compact group of a larger $S$-arithmetic locally homogeneous space, specifically
\begin{align*}
Y &\coloneqq \lfaktor{\mathbf{PGL}_2(\mathbb{Z})}{\mathbf{PGL}_2(\mathbb{R})\times \mathbf{PGL}_2(\mathbb{Q}_p)}\\
X &\cong \faktor{Y}{\mathbf{PGL}_2(\mathbb{Z}_p)} 
\end{align*}
If we consider $Y$ as a dynamical system with the right action by $a=\begin{pmatrix} p&0\\0&1\end{pmatrix}$, then the action does not descent to the space $X$. Nevertheless, a ghost of this action is still visible in $X$ --  the $p$-Hecke operator. Our main focus is higher rank generalization of this example. 

For a general dynamical system and a state space we show how the Koopman operator $U_S$ on $L^2(Y,m)$ induces an averaging operator $T$ on $L^2(Y,m)$ defined by
\begin{equation*}
T\,f\coloneqq\mathbb{E}\left(U_S\,\mathbb{E}\left(f \mid \mathcal{X}\right) \mid \mathcal{X}\right)
\end{equation*}
A main feature of $T$ is that it can be restricted to an operator from $L^2(X,m)$ to itself. 

\subsection{Large Deviations}
We study state spaces for which the averaging operator $T$ behaves in a suitable sense like the transition operator of a Markov chain. We provide the formal definition of the required property, which we call property (M), in Definition \ref{def:M}. Notice that the example of the modular surface  introduced above does not possess property (M), yet this is amended by dividing by an Iwahori subgroup of $\mathbf{PGL}_2(\mathbb{Q}_p)$ instead of a maximal compact one. One can then deduce all the result we prove on the basic modular surface from the analogues results on this finer state space.   

We develop a large deviation bound for empirical averages of functions on the state space in terms of $\|T\|_{L^2_0(X,m)}$, where $L^2_0(X,m)$ is the orthogonal complement to the constant functions in $L^2(X)$. If the averaging operator $T$ is self-adjoint, which is not always the case, then $1-\|T\|_{L^2_0(X,m)}$ is the spectral gap of $T$. Here is a slightly simplified version of our large deviation result.
\begin{thm*}[Theorem \ref{thm:LD-specgap}]
Let $Y$ be a dynamical system with a state space $X$ having property (M). Denote $\lambda\coloneqq\|T\|_{L^2_0(X,m)}\leq1$.

Let $\varphi\colon Y\to[0,1]$ be a $\mathcal{X}$-measurable function and $\eta\geq m(\varphi)$. For any $\theta\in\left\{\lambda^k\mid k\in\mathbb{N}\right\}$ such that  $\theta\leq\frac{\eta-m(\varphi)}{1-m(\varphi)}$ 
\begin{align*}
m&\left(y\mid \frac{1}{n}\sum_{i=0}^{n-1} \varphi(S^n y)\geq \eta\right)\\
&\ll_{\theta,\lambda,\eta,m(\varphi)}
\exp\left[-n(-\log\lambda)\frac{D(\eta \,\|\, \theta+(1-\theta)m(\varphi))}{-\log\theta}\right]
\end{align*}
Where for any $0\leq a,b \leq 1$ the function $D(a\,\|\,b)$ is the binary Kullback-Leibler divergence in natural base defined by
\begin{equation*}
D(a\,\|\,b)\coloneqq -a\log\frac{b}{a}-(1-a)\log\frac{1-b}{1-a}
\end{equation*}
\end{thm*}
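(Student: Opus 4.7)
The plan is to implement a Chernoff--Cram\'er estimate for the moment generating function of the empirical average, carried out blockwise so that $\theta=\lambda^k$ enters as the effective spectral parameter. Starting from Markov's inequality with exponential weight, for any $s>0$
\begin{equation*}
m\Bigl(y : \tfrac{1}{n}\sum_{i=0}^{n-1}\varphi(S^i y)\geq\eta\Bigr)\leq e^{-sn\eta}\,Z_n(s),\qquad Z_n(s)\coloneqq\int\prod_{i=0}^{n-1}e^{s\varphi(S^i y)}\,dm.
\end{equation*}
Setting $\phi\coloneqq e^{s\varphi}$, an $\mathcal{X}$-measurable function, and iteratively applying the identity $E(U_S f\mid\mathcal{X})=T\,E(f\mid\mathcal{X})$ built into property (M), the moment generating functional telescopes into
\begin{equation*}
Z_n(s)=\langle 1,(M_\phi T)^{n-1}\phi\rangle_{L^2(X,m)},
\end{equation*}
where $M_\phi$ denotes multiplication by $\phi$. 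The task is thus reduced to a spectral estimate on the positive operator $M_\phi T$.

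Next, group the time steps into $n/k$ blocks of length $k$, with $\theta=\lambda^k$. The key spectral input is the decomposition $T^k=E_0+(T^k-E_0)$, with $E_0$ the rank-one projection onto constants and $\|T^k-E_0\|_{L^2_0}\leq\theta$. I plan to prove the blockwise estimate
\begin{equation*}
\rho(M_\phi T^k)\leq\theta\,\|\phi\|_\infty+(1-\theta)\,m(\phi),
\end{equation*}
which heuristically says that $T^k$ behaves in the worst case as a convex combination of the identity (weight $\theta$) and the ergodic mean projection (weight $1-\theta$). For $\phi=e^{s\varphi}$ with $\varphi\in[0,1]$, the convexity inequality $e^{s\varphi}\leq 1+\varphi(e^s-1)$ yields $m(\phi)\leq 1+m(\varphi)(e^s-1)$, so the right hand side simplifies to $1+p(e^s-1)$ with $p\coloneqq\theta+(1-\theta)m(\varphi)$. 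Iterating across the $n/k$ blocks gives $Z_n(s)\ll_{\theta,\lambda,\eta,m(\varphi)}(1+p(e^s-1))^{n/k}$.

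Finally, optimize in $s$. The Chernoff bound becomes
\begin{equation*}
m\Bigl(\tfrac{1}{n}\sum\varphi(S^i y)\geq\eta\Bigr)\ll\exp\Bigl[-\tfrac{n}{k}\bigl(sk\eta-\log(1+p(e^s-1))\bigr)\Bigr],
\end{equation*}
and the standard Bernoulli choice $s^\ast=\log\tfrac{\eta(1-p)}{(1-\eta)p}$---valid precisely when $p\leq\eta$, which rearranges to the hypothesis $\theta\leq(\eta-m(\varphi))/(1-m(\varphi))$---turns the bracketed expression into $k\,D(\eta\|p)$. Combining with $k=-\log\theta/-\log\lambda$ recovers the exponent $n(-\log\lambda)\,D(\eta\|p)/(-\log\theta)$ as stated.

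The main technical obstacle is the blockwise spectral radius bound $\rho(M_\phi T^k)\leq\theta\|\phi\|_\infty+(1-\theta)m(\phi)$. In the self-adjoint case it reduces to Weyl-type perturbation applied to the similar self-adjoint operator $M_{\phi^{1/2}}T^kM_{\phi^{1/2}}$, treating $E_0$ as a rank-one part of $T^k$. In the general non-self-adjoint setting of the theorem, positivity of $M_\phi T^k$ together with a careful exploitation of property (M) will be needed to upgrade the $L^2_0$-operator norm bound on $T^k-E_0$ to the spectral-radius form required for the Chernoff argument to yield a pointwise Bernoulli-type moment generating function.
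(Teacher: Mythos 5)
There is a genuine gap, and it sits exactly where your argument departs from a per-time-step analysis: the claim that grouping consecutive time steps into $n/k$ blocks reduces $Z_n(s)=\langle 1,(M_\phi T)^{n-1}\phi\rangle$ to iterates of $M_\phi T^k$, yielding $Z_n(s)\ll(1+p(e^s-1))^{n/k}$. The multiplication operators $M_\phi$ at the $k-1$ interior times of each block do not commute past $T$ and do not disappear; discarding them costs a factor up to $e^{s(k-1)}$ per block, which destroys the rate, and in fact the asserted bound on $Z_n(s)$ is false in general: whenever the system admits long runs with $\varphi$ close to $1$ (e.g.\ $\lambda$ near $1$), $Z_n(s)$ grows like $e^{sn}$ up to exponentially small corrections as $s\to\infty$, while the right-hand side grows only like $e^{sn/k}$. (A symptom of this appears already in your optimization: with $s^\ast=\log\frac{\eta(1-p)}{(1-\eta)p}$ the bracket $s^\ast k\eta-\log(1+p(e^{s^\ast}-1))$ equals $kD(\eta\|p)+(k-1)\log\frac{1-p}{1-\eta}$, not $kD(\eta\|p)$, so the computation would deliver a bound strictly stronger than the theorem -- too strong to be true.) The paper's mechanism for making $\theta=\lambda^k$ enter is different: it does \emph{not} bound the full MGF over all $n$ times. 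Instead, if $\frac1n\sum_{i=0}^{n-1}\varphi(S^iy)\geq\eta$, then by pigeonhole at least one of the $k$ arithmetic progressions $\{i\equiv j\bmod k\}$ has empirical average $\geq\eta$; one applies the single-scale Chernoff bound to the $S^k$-dynamics along that progression (legitimate because property (M) passes to $S^k$ and gives $T_{S^k}=T^k$ with $\|T^k\|_{L^2_0}\leq\lambda^k=\theta$), and then takes a union bound over the $k$ residue classes. This produces the exponent $n(\frac1k-\frac1n)D(\eta\|\theta+(1-\theta)m(\varphi))$ and the harmless prefactor absorbed into $\ll_{\theta,\lambda,\eta,m(\varphi)}$.

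The second gap is the "main technical obstacle" you leave open: the spectral estimate $\rho(M_\phi T^k)\leq\theta\|\phi\|_\infty+(1-\theta)m(\phi)$ is only sketched in the self-adjoint case, whereas the theorem is stated without self-adjointness. The paper needs no spectral radius and no self-adjointness: since the exponential moment is literally a bilinear form, it suffices to bound the \emph{operator norm} of the symmetrized operator $M_{e^{r\varphi/2}}TM_{e^{r\varphi/2}}$, which is done by splitting $e^{r\varphi/2}f$ and $e^{r\varphi/2}g$ into their components along the constants $V$ and along $V^\perp=L^2_0(X)$, applying Cauchy--Schwarz and the inequality of means on each piece, and using $e^{r\varphi}\leq 1+\varphi(e^r-1)$ to control the $V$-components by $1+m(\varphi)(e^r-1)$; this yields $\|M_{e^{r\varphi/2}}TM_{e^{r\varphi/2}}\|\leq 1+[\lambda+(1-\lambda)m(\varphi)](e^r-1)$, the exact analogue of your claimed inequality but in a form that closes the argument for non-normal $T$ (a spectral-radius bound would in any case not control a fixed power $\|\cdot\|^{n-1}$ without further input). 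If you replace your block reduction by the residue-class pigeonhole and prove the norm bound by this bilinear-form decomposition, your outline becomes the paper's proof.
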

This bound applies in particular to non-reversible Markov chains in discrete time. Large deviation bounds for Markov chains with decay rate depending on spectral data of the transition operator is a well-studied subject. It has been initiated by Gillman \cite{Gillman} and developed, inter alios, by \cite{AFWZ, KahaleLD, Lezaud, LeonPerron, Wagner, CLLM, PointsOnSphere}. 

This form for the rate function of the large deviation estimate is new already in the setting of Markov chains. In particular, our large deviation theorem has the pleasant feature that for $\lambda=0$, which holds for i.i.d.\ variables taking values in a finite alphabet, we recover the Chernoff-Hoeffding bound \cite{Hoeffding} in full strength.

\subsection{Effective Rigidity of The Measure of Maximal Entropy}\label{sec:intro-max}
Suppose we have an $S$-arithmetic quotient $\lfaktor{\Gamma}{G_S}$ equipped with the right action by a semisimple element in a non-archimedean place $a\in G_p$. As long as the group is generated by the horospherical subgroups corresponding to $a$ -- the Haar measure is known to be the unique measure of maximal entropy \cite[\S 9]{MargulisTomanov}, \cite[\S 7]{Pisa}.

We have the following form effective rigidity for the measure of maximal entropy.
\begin{thm*}[Theorem \ref{thm:rigidity-max-alg}]
Let $\mathbf{G}$ be a connected simply connected absolutely almost simple linear algebraic group defined over $\mathbb{Q}$. Let $S$ be a finite set of places for $\mathbb{Q}$ including $\infty$, such that $G_S\coloneqq \prod_{v\in S} \mathbf{G}(\mathbb{Q}_v)$ is not compact. 

Let $p\in S$ be a finite place such that $G_p\coloneqq\mathbf{G}(\mathbb{Q}_p)$ is not compact. Fix $a\in G_p\cap\mathbf{G}(\mathbb{Q})$ $\mathbb{Q}_p$-regular and semisimple and not contained in a compact subgroup. 

Denote by $A<G_p$ the maximal split subtorus of a maximal torus including the element $a$. Let $\Delta$ be the affine Bruhat-Tits building associated to $G_p$.
Denote by $H<G_p$ the pointwise stabilizer of the apartment corresponding to $A$ in $\Delta$. 

Let $\Gamma<G_S$ be a congruence lattice and denote by $m$ the Haar probability measure on $\lfaktor{\Gamma}{G_S}$.

Let $\varphi\colon\sdfaktor{\Gamma}{G_S}{H}\to[0,1]$ be a continuous function that is smooth in the $p$-adic coordinate, i.e.\ when considered as a function on $\lfaktor{\Gamma}{G_S}$ it is invariant under some compact open subgroup $K_0<G_p$ containing $H$. 

Then for any $\varepsilon>0$ there is an explicitly computable constant $C>0$ depending only on $\varepsilon$, $G_p$, $K_0$ and $a\in G_p$ such that for any $a$-invariant measure $\mu$ we have
\begin{equation*}
|\mu(\varphi)-m(\varphi)|\leq C \left(h_m(a)-h_\mu(a)\right)^{1/2-\varepsilon}
\end{equation*}
\end{thm*}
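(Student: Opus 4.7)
The plan is to apply the paper's effective equidistribution theorem (the main application of Theorem~\ref{thm:LD-specgap}) to the pushforward of $\mu$ onto the state space $X \coloneqq \sdfaktor{\Gamma}{G_S}{H}$. Since $H$ is essentially the Iwahori-type pointwise stabilizer of the apartment, $X$ carries property~(M), and $a$ induces a (Hecke-like) averaging operator $T$ on $L^2(X,m)$. The first ingredient I need is an explicit bound $\lambda \coloneqq \|T\|_{L^2_0(X,m)} < 1$. This reduces to a quantitative spectral gap for the unitary $G_p$-representation on $L^2(\lfaktor{\Gamma}{G_S})^{K_0}$, which for simply connected absolutely almost simple $\mathbf{G}$ is known via property~$(\tau)$ and Jacquet--Langlands--Ramanujan type bounds, with the dependence claimed in the theorem.

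The second ingredient is to translate the entropy deficit $\delta \coloneqq h_m(a) - h_\mu(a)$ into a Bowen-ball mass bound for $\mu$. The Brin--Katok formula, applied along the $a$-ergodic decomposition of $\mu$ and combined with a Markov-type tail estimate, produces for each $\tau>0$ a set $Y_\tau\subseteq X$ with $\mu(Y_\tau)\geq 1-\tau$ and a scale $n_0=n_0(\tau)$ beyond which every Bowen ball $B_n(y)$ of scale $n$ relative to $K_0$, centered at $y\in Y_\tau$, satisfies $\mu(B_n(y))\leq e^{-n(h_m(a)-\delta-\tau)}$. Because $m$ is the measure of maximal entropy, $m(B_n(y))\asymp e^{-n h_m(a)}$ at the same scale on most of $X$, so $\mu$ is suboptimal on Bowen balls only by the factor $e^{n(\delta+\tau)}$. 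This places us precisely in the regime of \emph{measures with sub-optimal bounds on the mass of Bowen balls} for which the paper's equidistribution theorem is designed.

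Now fix $\eta > m(\varphi)$ and set $E_{\eta,n} \coloneqq \bigl\{y\in X : \tfrac{1}{n}\sum_{i=0}^{n-1}\varphi(a^i y)\geq \eta\bigr\}$. Theorem~\ref{thm:LD-specgap} yields $m(E_{\eta,n})\ll e^{-n R(\eta)}$, and for $\eta$ near $m(\varphi)$ one has $R(\eta)\gtrsim (-\log\lambda)(\eta-m(\varphi))^2$ via Pinsker's inequality $D(\eta\|m(\varphi))\geq 2(\eta-m(\varphi))^2$ together with an optimal choice of $\theta$. Covering $E_{\eta,n}$ by Bowen balls and comparing masses gives
\begin{equation*}
\mu(E_{\eta,n})\;\leq\;\frac{m(E_{\eta,n})}{\min_y m(B_n(y))}\cdot\max_{y\in Y_\tau}\mu(B_n(y))+\tau\;\leq\;e^{-n(R(\eta)-\delta-\tau)}+\tau.
\end{equation*}
On the other hand, Birkhoff's theorem and the ergodic decomposition of $\mu$ give $\liminf_n\mu(E_{\eta,n})\geq (\mu(\varphi)-\eta)/(1-\eta)>0$ whenever $\eta<\mu(\varphi)$. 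Taking $\eta\nearrow\mu(\varphi)$ and optimizing the trade-off between $\tau$ and $n$ forces $R(\eta)\leq \delta$ up to a $\delta^{\varepsilon}$ loss, so the quadratic lower bound on $R$ yields $\mu(\varphi)-m(\varphi)\ll\delta^{1/2-\varepsilon}$. Applying the same argument to $1-\varphi$ handles the reverse inequality.

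The main obstacle is effectivizing the Brin--Katok input: the classical formula is only a pointwise a.e.\ limit, whereas the argument requires a quantitative tail bound on $\mu\{y:-\tfrac{1}{n}\log\mu(B_n(y))<h_\mu(a)-\tau\}$ at finite $n$, with explicit dependence on $\tau$ and $\delta$. The trade-off between $\tau$ and $n$ in this tail bound is precisely what produces the $\varepsilon$-loss in the final exponent; the $1/2$ itself is forced by the quadratic behaviour of the KL divergence at its minimum. A secondary issue is verifying that the Bowen balls and the smoothness hypothesis of $\varphi$ under $K_0$ interact cleanly when passing between $Y$ and $X$, and that the spectral-gap bound on $\lambda$ is uniform in the finitely many Hecke data that enter through $K_0$ and $a$.
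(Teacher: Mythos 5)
Your overall strategy (large deviations on a state space, Bowen-ball packing against the $a$-homogeneous Haar measure, Pinsker to convert the KL bound into $|\mu(\varphi)-m(\varphi)|\lesssim D^{1/2-\varepsilon}$) is the same as the paper's, but your very first step has a genuine gap: you take the state space to be $X=\sdfaktor{\Gamma}{G_S}{H}$ and assert it has property (M) because ``$H$ is essentially the Iwahori-type pointwise stabilizer of the apartment.'' It is not: $H$ is the pointwise stabilizer of the \emph{whole} apartment, a compact but non-open subgroup, much smaller than any Iwahori; the paper's property (M) construction applies only to the arrow subgroups $K_\to=I^+\cap I^-$ attached to two chambers of the apartment (Proposition \ref{prop:arrow-M}, Proposition \ref{prop:M-algebraic}), and the quotient by $H$ is strictly finer than every such state space. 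Moreover $N_{G_p}(H)$ is not open, so $Y\to\faktor{Y}{H}$ does not even satisfy the standing hypotheses of \S\ref{sec:standing}, and the $L^2_0$-norm gap input from \cite{GMO} is invoked for compact \emph{open} subgroups. This is exactly why the theorem assumes $\varphi$ is invariant under a compact open $K_0\supseteq H$: the paper's proof takes the arrow subgroups $K_n$ associated to the vertices $a^{\pm n}.v_0$, whose rhomboids exhaust the apartment so that $\bigcap_n K_n=H$, uses compactness to find $n$ with $K_n\subseteq K_0$, and then runs the whole machinery on the state space $\faktor{Y}{K_n}$ with the action of the \emph{power} $a^{2n}$ (property (M) holds for $a^{2n}$, not for $a$, relative to $K_n$), finally rescaling via $h_\cdot(a^{2n})=2n\,h_\cdot(a)$; this is where the constant's dependence on $K_0$ and $a$ enters. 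Your proposal never uses the $K_0$-invariance for this reduction and never passes to a power of $a$, so Theorem \ref{thm:LD-specgap} and Theorem \ref{thm:effective} cannot be applied as you state them.

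A secondary point: the ``main obstacle'' you flag -- effectivizing Brin--Katok into a finite-scale tail bound, which you claim produces the $\varepsilon$-loss -- is not needed and is not the source of the loss. In the paper the qualitative Brin--Katok statement (Lemma \ref{lem:brin-katok-ergodic}) suffices, because Proposition \ref{prop:limit-tight} is applied with the constant sequence $\mu_i=\mu$ and the limit $n\to\infty$ is taken \emph{before} any optimization, killing all finite-scale error terms; non-ergodic $\mu$ is then handled by integrating the Pinsker inequality over the ergodic decomposition and using concavity of $\sqrt{\cdot}$. The exponent $1/2-\varepsilon$ arises instead from optimizing the discrete parameter $\theta\in\lambda^{\mathbb{N}}$ in the bound $\sqrt{\tfrac{D}{-2\log\lambda}}\sqrt{-\log\theta}+\theta$ of Theorem \ref{thm:rigidity-max-entr} (take $\theta\approx\sqrt{D}$, so the $\sqrt{-\log\theta}$ factor costs $\sqrt{\log(1/D)}$, absorbed into $D^{-\varepsilon}$), as in Corollary \ref{cor:max-entrp-rig-simple}. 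So you have identified an unnecessary hard step while omitting the necessary one (the arrow-subgroup/state-space reduction using $K_0$ and the passage to $a^{2n}$); your Bowen-ball covering inequality would also need the paper's disjointification and the $\varphi^{B_0}$/semicontinuity bookkeeping to be made rigorous, but those are repairable details.
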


This should be compared with the recent theorem of R\"{u}hr \cite[Theorem 1.1]{Ruhr}. The result of R\"{u}hr applies to the special case when\footnote{Necessarily $\lfaktor{\Gamma}{G_S}$ is a compact quotient.} $G_S=\mathbf{G}(\mathbb{Q}_p)$, $\mathbf{G}$ is $\mathbb{Q}_p$-split and $a$ is a semisimple element that does not belong to a compact subgroup. Wherever it applies \cite[Theorem 1.1]{Ruhr} is somewhat stronger then Theorem \ref{thm:rigidity-max-alg} and we discuss the major differences in the following.

Analogues results to R\"{u}hr's and ours are known also in other settings \cite{Polo, KadyrovP}.

We note that Theorem \ref{thm:rigidity-max-alg} is derived from stronger results of more technical nature\footnote{If the measure $\mu$ is ergodic we have a relatively concise form of the strong version of our results in Corollary \ref{cor:entropy-ergodic}.}. In particular, the methods which lead to the proof of Theorem \ref{thm:rigidity-max-alg} allow us to prove the non-escape of mass results described in the next section. Both Theorem \ref{thm:rigidity-max-alg} and a possible extension of \cite[Theorem 1.1]{Ruhr} to general $S$-arithmetic quotients would imply qualitatively weaker results for non-escape of mass.

Theorem \ref{thm:rigidity-max-alg} and \cite[Theorem 1.1]{Ruhr} both rely eventually on exponential decay of matrix coefficients.  Bounds on matrix coefficients enter our argument because it implies a bound on the $L^2_0$-norm of the averaging operator \cite{SarnakKyoto, Chiu, BurgerSarnak, CU, COU, GMO}.
R\"{u}hr uses decay of matrix coefficients to show a uniform effective equidistribution under the action of the stable horospherical subgroup corresponding to $a$; equidistribution under the horospherical action is related to  mixing of the $a$-action as has been already observed by Margulis \cite{Margulis}. The pertinent bounds on matrix coefficients are the result of the combined work of \cite{BurgerSarnak,CU,ClozelTau,GelbartJacquet,JacquetLanglands,Rogawski, OhBounds, Oh}.

The primary strength of our result is that it covers significantly more cases. Most importantly, we are able to treat $S$-arithmetic quotients with an archimedean part.

The central shortcoming of our result is that it applies only to functions invariant under $H$. The theorem of R\"{u}hr applies to all smooth functions. The reason we are restricted to $H$ invariant functions is quite natural. To deduce Theorem \ref{thm:rigidity-max-alg} we need a state space $X=\faktor{Y}{K}$ such that $\varphi$ factors through $X$. The  state spaces come from subgroups $K<G_p$ that stabilize two chambers\footnote{Not necessarily distinct.} in the apartment corresponding to $A$ in $\Delta $, see \S\ref{sec:walks-buildings}. In particular, any state space that we analyze must be coarser then the quotient by the pointwise stabilizer of this apartment -- $H$.

Another difference is that our result applies to functions in $L^\infty$. If we take $\varphi$ bounded but such that we do not have necessarily $0\leq\varphi\leq 1$ then  $|\mu(\varphi)-m(\varphi)|$ will be proportionate to $\|\varphi\|_\infty$. The result of \cite{Ruhr} is for any smooth function in $L^2$ and more importantly the bound is proportionate to  $\|\varphi\|_2$. The reason our result depends on the $L^\infty$ norm lies in the basis of the development of the large deviations bound in \S\ref{sec:Kahale}. 

Last, we lose a factor of $\varepsilon$ in the exponent compared to \cite[Theorem 1.1]{Ruhr}. We note that we derive Theorem \ref{thm:rigidity-max-alg} from stronger results, specifically Proposition \ref{prop:limit-tight}. 

\subsection{Non-Escape of Mass}\label{sec:intro-non-escape}
The relation between measures of high entropy and non-escape of mass has been introduced by Einsiedler, Lindenstrauss, Michel and Venkatesh \cite{ELMVPGL2} and extended in \cite{EK}, \cite{EKP}, \cite{KadyrovH} and \cite{KKLM}. Closely related is the work of Cheung \cite{Cheung}.
These results described non-escape of mass for flows under a one-parameter subgroup in the real place\footnote{Actions in rank $1$ in a $p$-adic place have been briefly discussed in \cite[Remark 5.2]{ELMVPGL2}.}.

We present a theorem that is closely related to the previously studied cases. Our result establishes non-escape of mass for measures of high entropy invariant under the action of a regular semisimple element in the $p$-adic place. The tools we use in proving this theorem are considerably different from the methods applied in the proofs of the previously established results. Specifically, none of the above relied on decay of matrix coefficients.

Assume that $a\in G_p$ is $\mathbb{Q}_p$-regular and semisimple. Let $A<G_p$ be the maximal split subtorus of a maximal torus containing $a$ and denote by $A^+$ the closed Weyl chamber corresponding to $a$. The function $\eta\colon A\to\mathbb{R}$ has been defined by Oh \cite{Oh} and extended in \cite{COU} and \cite{GMO}. It is equal to the product of all the roots in a maximal strongly orthogonal system of positive roots, see Definition \ref{def:eta-function}. This function is closely related to the decay of matrix coefficients and the norm gap of Hecke operators. 

For the diagonal torus in $\mathbf{SL}_n$ and
\begin{equation*}
A^+=\left\{\diag(a_1,\ldots,a_n) \mid a_i\in\mathbb{N}, a_i>0 \textrm{ and }\forall 1\leq i\leq n-1\colon a_{i+1}|a_i  \right\}
\end{equation*}
the function $\eta$ for $a\in A^+$ is equal to
\begin{equation*}
\eta(a)=\prod_{i=1}^{\lfloor n/2 \rfloor } \frac{|a_i|_p}{|a_{n+1-i}|_p}
\end{equation*}
 
\begin{thm*}[Theorem \ref{thm:non-escape}]
Let $\mathbf{G}$ be a connected simply connected absolutely almost simple linear algebraic group defined over $\mathbb{Q}$. Let $S$ be a finite set of places for $\mathbb{Q}$ including $\infty$, such that $G_S\coloneqq \prod_{v\in S} \mathbf{G}(\mathbb{Q}_v)$ is not compact. 

Let $p\in S$ be a finite place such that $G_p\coloneqq\mathbf{G}(\mathbb{Q}_p)$ is not compact, set $r=2$ if $\rank_{\mathbb{Q}_p} \mathbf{G}=1$ and $r=1$ if the rank is higher. 

Fix $a\in G_p\cap\mathbf{G}(\mathbb{Q})$ a $\mathbb{Q}_p$-regular and semisimple element not belonging to a compact subgroup. Let $A<G_p$ be the maximal split subtorus of a maximal torus containing $a$, set $A^+$ to be the closed Weyl chamber corresponding to $a$. Define $\eta$ with respect to these choices.  

Let $\Gamma<G_S$ be a congruence lattice and denote by $m$ the Haar probability measure on $\lfaktor{\Gamma}{G_S}$.

Suppose we are given a sequence of $a$-invariant probability measures $\mu_i$ on $Y\coloneqq\lfaktor{\Gamma}{G}$. Set $h=\liminf_{i\to\infty} h_{\mu_i}(a)$.
If $\mu_i$ converges to a measure $\mu$ in the weak-$*$ topology then
\begin{equation*}
\mu(Y)\geq 1-2r\frac{h_m(a)-h}{-\log\eta(a)}
\end{equation*}
\end{thm*}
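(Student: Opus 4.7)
The plan is to transfer the problem to a state space $X=Y/K_0$ with property (M), bound the averaging operator $T$ on $L^2_0(X,m)$ by matrix coefficient estimates, and then combine the large deviation bound of Theorem~\ref{thm:LD-specgap} applied to the Haar measure $m$ with an entropy pigeonhole to convert cusp mass under the $\mu_i$ into an entropy deficit $h_m(a)-h_{\mu_i}(a)$.

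I would first choose a compact open $K_0<G_p$ stabilising a pair of chambers in the apartment of $A$, so that $X=Y/K_0$ has property (M) and the associated averaging operator $T$ encodes one $a$-step of the resulting Markov chain on $X$. The matrix coefficient bounds of Oh and Gorodnik--Mozes--Oh then yield $\|T\|_{L^2_0(X,m)}\le\lambda\coloneqq\eta(a)^{1/(2r)}$, the factor $r$ accounting for the Burger--Sarnak style tensor trick required in rank one, so that $-\log\lambda=\tfrac{1}{2r}(-\log\eta(a))$. For a compact $C\subset Y$ with $\mu(\partial C)=0$, take a continuous $K_0$-invariant test function $\varphi\colon Y\to[0,1]$ with $1_{Y\setminus C'}\le\varphi\le 1_{Y\setminus C''}$ for nested compacts $C''\subset C\subset C'$; the function descends to $X$ and $m(\varphi)$ can be made arbitrarily small as $C\uparrow Y$.

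The heart of the argument is an entropy pigeonhole, carried out first for an ergodic $a$-invariant $\mu$ with $\eta\coloneqq\mu(\varphi)$. For any $\delta>0$, Birkhoff gives $\mu(A_n)\to 1$ where $A_n=\{y:\tfrac{1}{n}\sum_{k=0}^{n-1}\varphi(a^ky)\ge\eta-\delta\}$. Fix a finite $\mathcal{X}$-measurable partition $\mathcal{P}$ refining the level sets of $\varphi$ such that the dynamical refinement $\mathcal{P}_n=\bigvee_{k=0}^{n-1}a^{-k}\mathcal{P}$ has atoms of $m$-mass bounded below by $\exp[-n(h_m(a)+\epsilon)]$. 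Since $A_n$ is $\mathcal{P}_n$-measurable, Theorem~\ref{thm:LD-specgap} applied to $m$ gives $m(A_n)\le\exp[-nI_\theta(\eta-\delta,m(\varphi))]$ where $I_\theta=(-\log\lambda)\cdot D(\cdot\|\cdot)/(-\log\theta)$ and $\theta\in\{\lambda^k:k\in\mathbb{N}\}$ is admissible. Consequently the number of atoms covering $A_n$ is $\le\exp[n(h_m(a)+\epsilon-I_\theta)]$, and the usual conditional-entropy inequality together with $\mu(A_n)\to 1$ forces $h_\mu(a)\le h_m(a)-I_\theta(\eta,m(\varphi))+\epsilon$. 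A direct expansion of the binary Kullback--Leibler divergence at $\theta$ of order $\lambda^{c\log(1/m(\varphi))}$ shows $\sup_\theta I_\theta(\eta,m(\varphi))\to\eta\cdot(-\log\lambda)$ as $m(\varphi)\to 0$, hence $\mu(\varphi)\le 2r(h_m(a)-h_\mu(a))/(-\log\eta(a))+o(1)_C$. Linearity of both $\mu\mapsto\mu(\varphi)$ and $\mu\mapsto h_\mu(a)$ in the ergodic decomposition extends the inequality to arbitrary $a$-invariant $\mu$.

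To pass to the weak-$*$ limit, set $\beta\coloneqq 1-\mu(Y)$ and pick compacts $K_\epsilon\uparrow Y$ with $\mu(\partial K_\epsilon)=0$ and $\mu(Y\setminus K_\epsilon)\le\epsilon$; continuous $\varphi_\epsilon\ge 1_{Y\setminus K_\epsilon}$ of the form above then satisfy $\liminf_i\mu_i(\varphi_\epsilon)\ge\beta-\epsilon$ while $\mu_i(\varphi_\epsilon)\le 2r(h_m(a)-h_{\mu_i}(a))/(-\log\eta(a))+o(1)_\epsilon$ for each $i$. Taking $\liminf$ along a subsequence realising $h=\liminf h_{\mu_i}(a)$ and then $\epsilon\to 0$ yields the claimed bound. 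I expect the principal technical obstacle to be the rate optimisation in Theorem~\ref{thm:LD-specgap}: the admissible parameter $\theta$ is restricted to the discrete set $\{\lambda^k\}$ and must be coupled to $-\log m(\varphi)$ with enough care to saturate $I_\theta$ precisely to $\eta\cdot(-\log\lambda)$, without losing a multiplicative constant that would spoil the sharp factor $2r$; a secondary subtlety is constructing the finite $\mathcal{X}$-measurable partition $\mathcal{P}$ on $X$ realising the topological entropy $h_m(a)$, which relies on property (M) and the chamber-stabilizer choice of $K_0$.
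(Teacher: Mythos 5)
Your outer architecture matches the paper's: the chamber-pair (arrow) stabilizer $K_0$ giving property (M), the Oh/Gorodnik--Mozes--Oh bound as the source of the spectral input $\eta(a)^{1/2r}$, Theorem \ref{thm:LD-specgap} applied to the Haar measure as the engine, compact exhaustion plus weak-$*$ limits, and an ergodic decomposition at the end. The genuine gap is in your central ``entropy pigeonhole''. Counting the atoms of $\mathcal{P}_n=\bigvee_{k=0}^{n-1}a^{-k}\mathcal{P}$ needed to cover $A_n$, where $\mu(A_n)\to1$, only bounds $h_\mu(a,\mathcal{P})$, and $h_\mu(a,\mathcal{P})\leq h_\mu(a)$ is the \emph{wrong} direction: an upper bound for one fixed finite partition says nothing about $h_\mu(a)=\sup_{\mathcal{Q}}h_\mu(a,\mathcal{Q})$ unless $\mathcal{P}$ is generating modulo zero-entropy factors. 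Your $\mathcal{P}$ is finite and $\mathcal{X}$-measurable on the noncompact space $Y$, so it is blind to the $K_0$-fibers, to the archimedean directions, and---decisively---to everything happening deep in the cusp, where one huge atom absorbs the dynamics. A measure hovering high in the cusp can carry substantial entropy while $A_n$ (with $\varphi\approx 1$ on the cusp) is covered by very few atoms of $\mathcal{P}_n$; your argument would then ``prove'' $h_\mu(a)\approx 0$, which only reflects $h_\mu(a,\mathcal{P})\approx 0$. Closing this hole means bounding the conditional entropy relative to the $\mathcal{P}$-factor, i.e.\ the entropy generated during cusp excursions---exactly the hard extra ingredient in the real-place non-escape results (EKP, KKLM), and an ingredient your sketch does not supply. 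A secondary problem in the same step: the hypothesis that \emph{every} atom of $\mathcal{P}_n$ has $m$-mass at least $\exp[-n(h_m(a)+\epsilon)]$ is not justified near the cusp, where the injectivity radius degenerates and the pushforward to $Y$ of a group Bowen ball can have much smaller Haar mass than Proposition \ref{prop:homogeneous} gives upstairs.

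The paper avoids this entirely by running the comparison in the opposite direction: Brin--Katok (Lemma \ref{lem:brin-katok-ergodic}, proved in the appendix) converts a large entropy $h_\mu(a)$ into an upper bound $\mu(yB^{(-t,t)})\leq e^{-2t(h_\mu(a)-\varepsilon)}$ on most of a compact set, and then Theorem \ref{thm:effective}/Proposition \ref{prop:limit-tight} packs many disjoint $B_0$-Bowen balls around $\mu$-typical points, lower-bounds the Haar mass of their union by homogeneity, and upper-bounds it by the large deviation estimate; no generating partition is needed, and the cusp never has to be resolved because only a fixed compact $\Omega$ enters. If you want to keep your route you must either replace partition atoms by genuine Bowen-ball covers (Katok-type formula, with the cusp contribution controlled separately) or add a bound on the conditional entropy given your factor; neither is routine. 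Two smaller points: the clean equality $\lambda=\eta(a)^{1/(2r)}$ is not what the matrix coefficient bounds give---there is a constant and an $\varepsilon$-loss, which the paper removes by applying the mass bound to $a^n$ and letting $n\to\infty$, then $\varepsilon\to 0$; and ``extend by linearity over the ergodic decomposition'' needs the care of Proposition \ref{prop:bigset-entropy}, since the ergodic-component inequality only applies when $\mu_y^{\mathcal{E}}(\varphi)$ exceeds the threshold in $\theta$ and $m(\varphi)$, so the components below threshold must be handled separately before integrating.
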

The factor $r=2$ in rank $1$ is related to the Ramanujan conjecture \cite{CU} and is conjectured to be redundant \cite[Conjecture 2.15]{GMO}. 

Although our results are for the $p$-adic place our expression for the bound on the mass is well defined in the real place as well. It is interesting to compare this bound with the established bounds in the real place and especially with \cite{KadyrovEscape}, \cite{KadyrovPohl} which establish that some of the known bounds for real flows are sharp. 

In the known archimedean cases with $\rank>1$ the expression $1-2\frac{h_m(a)-h}{-\log\eta(a)}$ is of the right order of magnitude but weaker then the known results.

Lets consider the the diagonal flow 
\begin{equation*}
a(t)=\diag(e^t,e^t,\ldots,e^{-nt})
\end{equation*}
on $\lfaktor{\mathbf{SL}_{n+1}(\mathbb{Z})}{\mathbf{SL}_{n+1}(\mathbb{R})}$. It is established in \cite{KKLM} that if sequence of $a(t)$ invariant measures $\mu_i$ converges to $\mu$ then $\mu(Y) \geq h/n-n$ where $h=\liminf_{i\to\infty} h_{\mu_i}(a(1))$. Moreover, Kadyrov \cite{KadyrovEscape} shows that this bound is actually sharp. The value of $1-2r\frac{h_m(a)-h}{-\log\eta(a)}$ in this case is $2\left(\frac{h}{n+1}-n\right)+1$ which is strictly smaller then the sharp bound of \cite{KKLM}.

\subsection{Effective Equidistribution}
The bridge between large deviation estimates and our results 
for $S$-arithmetic quotients is provided by an effective equidistribution result, Theorem \ref{thm:effective}. This theorem is closely related to the work of Linnik \cite{LinnikBook} and Ellenberg, Michel and Venkatesh \cite{PointsOnSphere}. 

The method of proof for Theorem \ref{thm:effective} uses the large deviation bound to show that if a measure $\mu$ on an $S$-arithmetic quotient $\lfaktor{\Gamma}{G_S}$ has on average small Bowen balls for the right action by $a$, then on the state space $\sdfaktor{\Gamma}{G_S}{K}$ the measure $\mu$ must be close in a suitable sense to the Haar measure. The novelty of our result compared to \cite{LinnikBook} and \cite{PointsOnSphere} is that we are able to provide non-trivial information even for a sub-optimal bound on the average measure of Bowen balls. 

In the works of Linnik and Ellenberg, Michel, and Venkatesh the Haar measure of a set of atypical points is bounded above by an exponentially decaying bound coming from a large deviation estimate. The same measure is bounded below by a sub-exponential bound derived from an asymptotically optimal estimate on the average size of Bowen balls.
The inequality between an exponential and a sub-exponential expression provides a contradiction no matter how good the rate function is in the large deviations estimate, as long as we have exponential decay.

When the available estimate on the average measure of Bowen balls is sub-optimal the lower bound is no longer sub-exponential. In this case we have lower an upper bounds both of exponential type. In this case, to derive a contradiction we need to compare the rate of decay of the two bounds. For this we require that the rate of the large deviation estimate be as good as possible.

The better exponential rate we have in the pertinent large deviations bound the better equidistribution result we have using the methods of Theorem \ref{thm:effective}.

\subsection{Acknowledgments}
This paper is part of the author's PhD thesis conducted at the Hebrew University of Jerusalem under the guidance of Prof. E. Lindenstrauss, to whom I am grateful for introducing me to homogeneous dynamics and number theory, and for many helpful discussions and insights. I would like to express my gratitudes to Akshay Venkatesh for encouraging and valuable conversations. It is my pleasure to thank Or Landesberg and Shai Evra for helpful discussions and much needed coffee breaks.

Last but not least, I would like to thank my wife, Olga Kalantarov, for her enduring support during the preparation of this paper.

The author has been supported by the European Research Council [AdG Grant 267259] throughout his PhD studies.

\section{Averaging Operators}
In this section we present a general framework for studying contraction properties for averaging operators coming from dynamical systems.

First we describe how a measurable dynamical system and a state space gives rise to an averaging operator.
\begin{defi}
Let $(Y,\mathcal{Y},m,S)$ be a dynamical system, viz.\ $(Y,\mathcal{Y},m)$ is a measure space and $S\colon Y\to Y$ is a measure preserving transformation. We define a \emph{state space} for the system $Y$ to be a measurable space $(X,\mathcal{X})$ with a measurable map $\pi:Y\to X$. The state space has a natural measure on it $\pi_*m$. 

We identify $\mathcal{X}$ with the $\sigma$-algebra $\pi^{-1}\mathcal{X}\subseteq\mathcal{Y}$. State spaces for $Y$ are in bijection with $m$-equivalence classes of sub-$\sigma$-algebras of $\mathcal{Y}$. Notice that $S$ doesn't necessarily act on $X$, equivalently $\mathcal{X}$ is not necessarily $S$-stable.
\end{defi}

\begin{defi}
Let $U_S$ be the Koopman operator associated with $S$ acting either on $L^1(Y)$ or on $L^2(Y)$. The averaging operator $T_S$ is defined on $L^1(Y)$ and on $L^2(Y)$ by $T_S\,f=\mathbb{E}\left(U_S\,\mathbb{E}\left(f\mid \mathcal{X}\right) \mid \mathcal{X}\right)$. If $P_X$ is the orthogonal projection onto $L^2(X)\leq L^2(Y)$ then $T_S$ on $L^2(Y)$ can be written as $T_S=P_X U_S P_X$. In functional analysis literature the operator $T_S$ on $L^2(Y)$ is called the compression of $U_S$ onto $L^2(X)$. In particular, $T_S$ is a well defined operator on $L^1(X)$ and $L^2(X)$.
\end{defi}

\begin{exmpl}
A finite state Markov chain with the shift map is a dynamical system. The associated directed graph is a state space for this dynamical system. The averaging operator in this case is the weighted normalised adjacency matrix. 
\end{exmpl}
\begin{exmpl}\label{exmpl2}
The space
\begin{equation*}
Y=\lfaktor{\mathbf{PGL}_2\left(\mathbb{Z}\left[\frac{1}{p}\right]\right)}  {\mathbf{PGL}_2\left(\mathbb{R}\right) \times \mathbf{PGL}_2\left(\mathbb{Q}_p\right)} 
\end{equation*}
is a dynamical system when equipped with the $p$-adic diagonal right action by $\begin{pmatrix} p&0 \\ 0&1 \end{pmatrix}$.
The space
\begin{equation*}
X=\lfaktor{\mathbf{PGL}_2(\mathbb{Z})} {\mathbf{PGL}_2(\mathbb{R})}
\end{equation*}
can be identified with $\faktor{Y}{K}$ where $K=\mathbf{PGL}_2(\mathbb{Z}_p)$. In particular, there is a continuous projection $Y\to X$ and we consider $X$ as a state space for the dynamical system $Y$.

The associated averaging operator is the well-known $p$-Hecke operator. 
\end{exmpl}
\begin{exmpl}
Lets consider a compact Riemannian manifold $M$ and let $g_t$ be the geodesic flow on $T_1\,M$. Fix $t\in\mathbb{R}$, the dynamical system we are interested in is the $t$-time map of the geodesic flow on $T_1\,M$ with the Liouville measure. The state space is $M$ and $T_{g_t}$ is closely related to the Laplacian which is up to a constant equal to $\lim_{t\to 0} \frac{1}{t^2}(T_{g_t}-Id)$.
\end{exmpl}

Those examples demonstrate that although the Koopman operator being a unitary operator is as far as an operator can be from having a spectral gap, its compression to an averaging operator on a state space can have a spectral gap. A necessary but not a sufficient condition for this is that the state space's $\sigma$-algebra has no $S$-stable sub-$\sigma$-algebra.

More specifically, we are going to discuss not the spectral gap of the operator $T_S$ but rather its $L^2_0$-norm. The space $L^2_0(Y,m)$ is the orthogonal complement to the constant function in $L^2(Y,m)$. If $T_S$ is self-adjoint then
the quantity 1-$\|T_S\|_{L^2_0(Y,m)}$ is the spectral gap of $T_S$.

\subsection{Property (M)}

When the dynamical system $Y$ is a 1-step Markov process with the associated directed graph as a state space, it holds for all $n\in\mathbb{N}$ that $T_{S^n}={(T_S)}^n$, i.e.\ the operators $\left\{T_{S^n}\right\}_{n\in\mathbb{N}}$ form a semigroup.

It is important to note that it is not always true that the averaging operators form a semigroup.
They do not form a semigroup\footnote{It is well known for $p$-Hecke operators on the modular surface that $T_{p^2}\neq (T_p)^2$} in Example \ref{exmpl2}, but they almost do so. We need to divide on the right not by a maximal compact subgroup of $\mathbf{PGL}_2(\mathbb{Q}_p)$ but by an Iwahori subgroup. The associated state space is $\lfaktor{\Gamma}{\mathbf{PGL}_2(\mathbb{R})}$ where $\Gamma$ is a congruence lattice defined by $\Gamma=\left\{\begin{pmatrix}
a & b\\ c & d
\end{pmatrix} \in \mathbf{PGL}_2(\mathbb{Z}) \mid c\equiv 0 \mod p \right\}$.

The primary property we use for the derivation of a large deviations bound is described in the following definition. This property is possessed by 1-step Markov processes.

\begin{defi}\label{def:M}
Let $(Y,\mathcal{Y},m,S)$ be a dynamical system. Let $(X,\mathcal{X})$ be a state space for $Y$.
We say that the system $Y$ has \emph{property (M)} with respect to the state space $X$ if there exists a closed $U_S$-stable $L^\infty(X)$-module\footnote{A closed $L^\infty(X)$-module in $L^2(Y)$ is a closed subspace of $L^2(Y)$ stable under multiplication by any member of $L^\infty(X)$.} $\mathcal{M}\subseteq L^2(Y)$ such that $L^2(X)\subseteq \mathcal{M}$ and
\begin{equation*}
(P_X U_S  P_X)\restriction_{\mathcal{M}}= (P_X U_S)\restriction_{\mathcal{M}}
\end{equation*}
Equivalently, the orthogonal complement of $L^2(X)$ in $\mathcal{M}$ is $U_S$-stable.
\end{defi}

\begin{remark}
Property (M) for $S$ implies the same for $S^k$ for any $k\in\mathbb{N}$.
\end{remark}
\begin{remark}
Assume the transformation $S$ is invertible, i.e.\ we have a dynamical system with a $\mathbb{Z}$-action.

In the interesting case that the state space is not invariant under the action of $S$, if property (M) holds  then on $\mathcal{M}$ the Koopman operator $U_S$ would be non-invertible, while on $L^2(Y)$ as a whole it is invertible.
If $U_S$ were invertible over $\mathcal{M}$, then the $U_S$-stability of the orthogonal complement of $L^2(X)$ in $\mathcal{M}$ would imply the
$U_S$-stability of $L^2(X)$ which is equivalent to $X$ being $S$-invariant.
\end{remark}

Dynamical systems with property (M) over a state space have the following important properties.
\begin{prop}\label{prop:M-basic-properties}
Let $(Y,\mathcal{Y},m,S)$ be a dynamical system and $(X,\mathcal{X})$ a state space for $Y$ with property (M).
\begin{enumerate}[leftmargin=0pt, itemindent=*]
\item
$T_{S^n}=(T_S)^n$ for all $n\geq1$
\item For $h\in L^\infty(X)$ define $M_h$ to be the operator over $L^2(Y)$ of multiplying by h. Then
$P_X (M_h U_S M_h)^n P_X= (M_h T_S M_h)^n$ for all $n\geq1$
\end{enumerate}
\end{prop}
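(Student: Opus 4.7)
The plan for both parts is one uniform induction that exploits the two features packaged into property (M): the $U_S$-stability of the module $\mathcal{M}$, and the identity $(P_X U_S P_X)\restriction_{\mathcal{M}} = (P_X U_S)\restriction_{\mathcal{M}}$. The key observation that drives the induction is that, since $L^2(X)\subseteq\mathcal{M}$ and $\mathcal{M}$ is $U_S$-stable, every iterate $U_S^k f$ with $f\in L^2(X)$ still sits inside $\mathcal{M}$, so property (M) is available to collapse any intermediate $P_X$ that appears.

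For part (1), the plan is to establish by induction on $n$ the identity $(T_S)^n f = P_X U_S^n f$ for every $f\in L^2(X)$. The base case is just $T_S f = P_X U_S P_X f = P_X U_S f$. For the step, the induction hypothesis gives $(T_S)^n f = P_X U_S^n f\in L^2(X)$, hence
\[
(T_S)^{n+1}f = T_S\bigl(P_X U_S^n f\bigr) = P_X U_S P_X\bigl(U_S^n f\bigr),
\]
where the last equality uses $P_X^2 = P_X$. Since $U_S^n f\in\mathcal{M}$, property (M) now replaces the inner $P_X$ to yield $P_X U_S^{n+1} f$. To promote this into an operator identity on $L^2(Y)$, observe that $T_S g = T_S P_X g$ for every $g\in L^2(Y)$, so $(T_S)^n g = (T_S)^n P_X g = P_X U_S^n P_X g = T_{S^n} g$.

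For part (2), the plan is to reduce to the same induction with $V\coloneqq M_h U_S M_h$ in place of $U_S$. Two preliminary facts are needed: first, $M_h$ commutes with $P_X$, because $h\in L^\infty(X)$ is $\mathcal{X}$-measurable and hence $\mathbb{E}(hg\mid\mathcal{X}) = h\,\mathbb{E}(g\mid\mathcal{X})$; second, $M_h$ preserves $\mathcal{M}$, since $\mathcal{M}$ is an $L^\infty(X)$-module. Together with the $U_S$-stability of $\mathcal{M}$, these imply that $\mathcal{M}$ is $V$-stable; they also give the operator identity $M_h T_S M_h = P_X V P_X$ and, for any $g\in\mathcal{M}$,
\[
P_X V P_X g = M_h P_X U_S P_X (M_h g) = M_h P_X U_S (M_h g) = P_X V g,
\]
where the middle step is property (M) applied to the $\mathcal{M}$-element $M_h g$. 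Thus the pair $(V,\mathcal{M})$ satisfies the exact analogue of property (M), and the induction of part (1) applied to $V$ delivers $(P_X V P_X)^n = P_X V^n P_X$, which is the claimed identity.

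The only real care required is bookkeeping: at each inductive step the element fed into property (M) must still lie in $\mathcal{M}$, and this is guaranteed uniformly by starting in $L^2(X)\subseteq\mathcal{M}$ and invoking $U_S$- (respectively $V$-) stability. I do not foresee a deeper obstacle.
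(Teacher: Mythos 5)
Your proposal is correct and is essentially the argument the paper gives: an induction on $n$ driven by the commutation $P_X M_h = M_h P_X$, the $L^\infty(X)$-module and $U_S$-stability of $\mathcal{M}$, and one application of $(P_X U_S P_X)\restriction_{\mathcal{M}}=(P_X U_S)\restriction_{\mathcal{M}}$ per step. Your only reorganization is to package the induction step as the observation that $V=M_h U_S M_h$ again satisfies property (M) relative to $\mathcal{M}$ and then reduce part (2) to part (1), whereas the paper runs the same computation directly inside $P_X(M_h U_S M_h)^n P_X$ and obtains part (1) as the case $h=1$; the content is the same.
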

\begin{proof}
The second claim is a generalization of the first so it is enough to prove it. 

Let $\mathcal{M}\leq L^2(Y)$ be as in the definition of property (M). For each $f\in L^2(Y)$ the vector $M_h (M_h U_S M_h)^{n-1} (P_X\,f) \in \mathcal{M}$. In addition, $P_X M_h =M_h P_X$. By induction on\footnote{The case $n$=1 is similar to the following.} $n$
\begin{align*}
P_X (M_h U_S M_h)^n P_X\, f
&=M_h P_X U_S \left(M_h \left(M_h U_S M_h\right)^{n-1} P_X\, f\right)\\
&=M_h P_X U_S P_X M_h \left(\left(M_h U_S M_h\right)^{n-1} P_X\, f\right)\\
&=M_h (P_X U_S P_X) P_X M_h \left(\left(M_h U_S M_h\right)^{n-1} P_X\, f\right)\\
&=M_h T_S P_X  M_h \left(\left(M_h U_S M_h\right)^{n-1} P_X\, f\right)\\
&=M_h T_S M_h  P_X \left(\left(M_h U_S M_h\right)^{n-1} P_X\, f\right)\\
&=(M_h T_S M_h)^n\,f
\end{align*}
\end{proof}

\section{Large Deviations}\label{sec:large-deviation}

\subsection{Standing Assumptions}
Let $(Y,\mathcal{Y},m,S)$ be a dynamical system with a state space $\left(X, \mathcal{X}\right)$ having property (M). We denote the associated averaging operator by $T\coloneqq T_S$.

Let $\varphi\colon Y \to \mathbb{R}_+$ be a bounded $\mathcal{X}$-measurable function. 
Set $A_n(y)\coloneqq\frac{1}{n}\sum_{i=0}^{n-1}{\varphi(S^n\,y)}$. 

\subsection{Exponential Moments and the Logarithmic Generating Function}
We would like to bound $m\left(y\in Y \mid A_n(y) \geq \eta \right)$. We follow the standard scheme of proving large deviation bounds, like Cram\'{e}r's theorem, using a bound on the logarithmic generating function, see \cite[\S\S 2.2-2.3]{DemboZeitouni}, \cite[Proof of Lemma 3.2]{KahaleLD}. Notice that we do not assume $T$ to be self-adjoint. 

The approach we take is to bound the exponential moment of the empiric average over the times $0,\ldots,n-1$ by a logarithmic generating function. 
An alternative approach to obtaining a large deviations bound is to use standard geometric moments of degree $n$ for arbitrary large $n$. This is the approach taken in \cite[\S 4]{AFWZ} and \cite[\S 12]{PointsOnSphere}, see also \cite[\S 3.3]{Expanders}.

The Markov inequality with the exponential function implies the following for any real $r>0$.
\begin{align}
m\left(y\in Y \mid A_n(y) \geq \eta\right)
&=m\left(y\in Y \mid \exp{(r n A_n(y))} \geq \exp{(r n \eta)} \right) \nonumber\\
&\leq \int \exp{(r n A_n)} \dif m \exp{(-r n \eta)} \nonumber\\
&=\exp\left\{-n\left(r\eta - \frac{1}{n}\log{\int \exp{(r n A_n)} \dif m} \right)\right\}
\label{eq:markov-exp}
\end{align}
We define the logarithmic generating function and its convex conjugate
\begin{align*} 
&\Lambda_n(r)=\log{\int \exp{(r A_n)} \dif m}\\
&\Lambda_n^*(\eta)=\sup_{r\in\mathbb{R}}{\left(r\eta-\frac{1}{n}\Lambda_n\left(rn\right)\right)}
\end{align*}
the asymptotic  exponential rate in the former bound is given by 
\begin{align*}
&\Lambda(r)=\limsup_{n\to\infty}{\frac{1}{n}\Lambda_n(n r)}\\
&\Lambda^*(\eta)=\sup_{r\in\mathbb{R}}{\left(r\eta-\Lambda\left(r\right)\right)}
\end{align*}

If we bound $\Lambda_n$ we can bound not only $\Lambda^*$ but actually bound effectively the large deviations
probability for a given $n$.
For this we use property (M) and the fact that $\varphi$ is $\mathcal{X}$-measurable.
\begin{align}
\int \exp{(r n A_n)} \dif m&=\int \prod_{i=0}^{n-1} {\left(U^i\, \exp{\left(r\varphi\right)}\right)} \dif m
\nonumber\\
&=\int \exp{(r\varphi)}\cdot \left({U\, M_{\exp{\left(r\varphi\right)}}}\right)^{n-1}1\dif m\nonumber\\
&=\int \exp{(r\varphi/2)}\cdot \left({M_{\exp{\left(r\varphi/2\right)}} \,U\, M_{\exp{\left(r\varphi/2\right)}}}\right)^{n-1}\exp{(r\varphi/2)}
\dif m\nonumber\\
&=\left<\exp{(r\varphi/2)}, 
\left({M_{\exp{\left(r\varphi/2\right)}} \,U\, M_{\exp{\left(r\varphi/2\right)}}}\right)^{n-1}\exp{(r\varphi/2)}\right>
\nonumber\\
&=\left<\exp{(r\varphi/2)}, 
\left({M_{\exp{\left(r\varphi/2\right)}} \,T\, M_{\exp{\left(r\varphi/2\right)}}}\right)^{n-1}\exp{(r\varphi/2)}\right>
\nonumber\\
&\leq \left\|\exp{\left(r\varphi/2\right)}\right\|_2^2 
\left\|{M_{\exp{\left(r\varphi/2\right)}} \,T\, M_{\exp{\left(r\varphi/2\right)}}}\right\|_{L^2(X)}^{n-1}
\label{eq:log-gen-norm}
\end{align}

We are left with bounding $\left\|{M_{\exp{\left(r\varphi/2\right)}} \,T\, M_{\exp{\left(r\varphi/2\right)}}}\right\|_{L^2(X)}$ which we
are going to do using an simple method, an elementary variant of which is essentially due to Kahale \cite[Lemma 3]{Kahale}\footnote{Kahale has computed a large deviations bound for the probability that a random walk on a undirected finite regular graph stays all the time in a subgraph. He has also shown that his bound is optimal in a specific sense.}.

Notice that instead of using the norm $\left\|{M_{\exp{\left(r\varphi/2\right)}} \,T\, M_{\exp{\left(r\varphi/2\right)}}}\right\|_{L^2(X)}$ we could have by Gelfand's formula bound the asymptotic large deviations rate
by the spectral radius
\begin{equation*}
\rho\left({M_{\exp{\left(r\varphi/2\right)}} \,T\, M_{\exp{\left(r\varphi/2\right)}}}\right)_{L^2(X)}
\end{equation*}
Actually if $T$ is compact then as 
a consequence of the Krein-Rutman theorem \cite{KreinRutmanRussian, KreinRutmanEnglish}\footnote{analogue of Perron-Frobenius for compact operators}
 the spectral radius is not only a bound but actually $\Lambda(r)=\rho\left({M_{\exp{\left(r\varphi/2\right)}} \,T\, M_{\exp{\left(r\varphi/2\right)}}}\right)_{L^2(X)}$, at least for some functions $\varphi$.
Furthermore the G\"{a}rtner-Ellis theorem of large deviations implies under some restrictions that the function $\Lambda^*(\eta)$ is not only an upper bound on the large deviations
probability but also a lower bound in a suitable sense.

If $T$ is not only compact but also self-adjoint then ${M_{\exp{\left(r\varphi/2\right)}} \,T\, M_{\exp{\left(r\varphi/2\right)}}}$ is also compact self-adjoint and its
operator norm is equal to the spectral radius.

\subsection{Generalization of Kahale's Method}\label{sec:Kahale}
A large deviation estimate without a self-adjointness assumption has been shown by Ellenberg, Michel and Venkatesh in \cite[Proposition 12.4]{PointsOnSphere}. Our motivation is mainly based upon Linnik's application of large deviation to equidistribution \cite{LinnikBook} and the development on his method by
Ellenberg, Michel and Venkatesh. 

The bound of \cite{PointsOnSphere} is for occupation times  of subsets in a random walk on a directed finite graph. Both our method, which is more general and provides somewhat stronger results, and the method of Ellenberg, Michel and Venkatesh can be traced back to Kahale \cite{Kahale}. The method of \cite{PointsOnSphere} builds upon \cite{AFWZ} while we use a classical approach using exponential moments.

From now on we assume that $0\leq \varphi\leq 1$  $m$-almost everywhere. 
Notice that for any $r\geq 0$
\begin{equation}\label{eq:exp(rphi)-bound}
0\leq e^{r\varphi}=1+\sum_{n=1}^{\infty}\varphi^n\frac{r^n}{n!}
\leq 1+\sum_{n=1}^{\infty}\varphi\frac{r^n}{n!}=1+\varphi(e^r-1)
\end{equation}
The inequalities can be replaced with equalities if $\varphi=\chi_A$ for some $A\in\mathcal{X}$.

For any $f\in L^2(X)$ because $0\leq\varphi\leq 1$ almost anywhere
\begin{align}
\|\exp{(r\varphi/2)} f\|_2&\leq \|\exp{(r/2)} f\|_2=e^{r/2}\|f\|_2 \label{eq:exp(rphi)f-bound}
\end{align}

Assume that $L^2(X)=V\oplus V^\perp$, where $V\subseteq L^2(X)$ is a closed subspace such that both $V$ and $V^\perp$ are
$T$-stable\footnote{As we don't assume $T$ to be self-adjoint stability of $V$ under $T$ does not immediately imply the same for $V^\perp$.}. 
For any $f\in L^2(X)$ denote by $(f)_V$ and $(f)_{V^\perp}$ the orthogonal projections of $f$ to $V$ and $V^\perp$ respectively.

We are going to bound the norm of ${M_{\exp{\left(r\varphi/2\right)}} \,T\, M_{\exp{\left(r\varphi/2\right)}}}$ by its norm on
$V$ and on $V^\perp$. Assume we know that $\|T\|_{V^\perp}\leq\lambda$ for some $1>\lambda>0$ but on $V$ we 
only know $\|T\|_{V}\leq 1$. 

The operator norm can be calculated using
\begin{equation*}
\|M_{\exp{\left(r\varphi/2\right)}} \,T\, M_{\exp{\left(r\varphi/2\right)}}\|_{L^2(X)}=
\sup_{\substack{f,g\in L^2(X) \\ \|f\|_2=\|g\|_2=1} }{\left<g,M_{\exp{\left(r\varphi/2\right)}} \,T\, M_{\exp{\left(r\varphi/2\right)}}\,f\right>}
\end{equation*}

Let $f,g\in L^2(X)$ such that $\|f\|_2=\|g\|_2=1$, then

\begin{align*}
&\left<g,M_{\exp{\left(r\varphi/2\right)}} \,T\, M_{\exp{\left(r\varphi/2\right)}}\,f\right>=
\left<M_{\exp{\left(r\varphi/2\right)}}\,g, T\, M_{\exp{\left(r\varphi/2\right)}}\,f\right>=\\
&\left<e^{r\varphi/2}\,g, T\, e^{r\varphi/2}\,f\right>=\\
&\left<(e^{r\varphi/2}\,g)_{V^\perp}, T\, [(e^{r\varphi/2}\,f)_{V^\perp}]\right>+
\left<(e^{r\varphi/2}\,g)_{V}, T\, [(e^{r\varphi/2}\,f)_{V}]\right>\leq\\
&\lambda\|(e^{r\varphi/2}\,g)_{V^\perp}\|_2 \|(e^{r\varphi/2}\,f)_{V^\perp}\|_2
+\|(e^{r\varphi/2}\,g)_{V}\|_2 \|(e^{r\varphi/2}\,f)_{V}\|_2\leq\\
&\lambda \frac{\|(e^{r\varphi/2}\,g)_{V^\perp}\|_2^2+ \|(e^{r\varphi/2}\,f)_{V^\perp}\|_2^2}{2}
+\|(e^{r\varphi/2}\,g)_{V}\|_2 \|(e^{r\varphi/2}\,f)_{V}\|_2
\end{align*}

The last line follows from the inequality of the means. We can now write  
$\|(e^{r\varphi/2}\,f)_{V^\perp}\|_2^2=\|e^{r\varphi/2}\,f\|_2^2-\|(e^{r\varphi/2}\,f)_{V}\|_2^2$ and
the same with $g$. Using this substitution and applying again the inequality of the means and \eqref{eq:exp(rphi)f-bound} we have
\begin{align}
\big<g&,M_{\exp{\left(r\varphi/2\right)}} \,T\, M_{\exp{\left(r\varphi/2\right)}}\,f\big>
\nonumber\\
&\leq \lambda \frac{\|e^{r\varphi/2}\,g\|_2^2+ \|e^{r\varphi/2}\,f\|_2^2}{2}
+(1-\lambda)\|(e^{r\varphi/2}\,g)_{V}\|_2 \|(e^{r\varphi/2}\,f)_{V}\|_2
\nonumber\\
&\leq \lambda e^r+(1-\lambda)\|(e^{r\varphi/2}\,g)_{V}\|_2 \|(e^{r\varphi/2}\,f)_{V}\|_2
\label{eq:norm-prebound}
\end{align}

\subsection{\texorpdfstring{$L^2_0$}{L^2_0}-Norm}\label{sec:l20-norm-bound}
Take $V=1\mathbb{C}$ to be the subspace of constant functions. If $T$ is self-adjoint then $1-\lambda$ is the spectral gap of
$T$. Furthermore, for any $f\in L^2(X)$ we have by Cauchy-Schwartz 
\begin{equation}\label{eq:specgap-exp(rphi)-bound}
\|(e^{r\varphi/2}\,f)_{V}\|_2=|\int e^{r\varphi/2}\,f \dif m| \leq \|e^{r\varphi/2}\|_2 \|f\|_2=\|e^{r\varphi/2}\|_2
\end{equation}
Let $m(\varphi)=\int \varphi \dif m$, then by \eqref{eq:specgap-exp(rphi)-bound} and \eqref{eq:exp(rphi)-bound}
\begin{equation*}
\|(e^{r\varphi/2}\,g)_{V}\|_2 \|(e^{r\varphi/2}\,f)_{V}\|_2\leq  \|e^{r\varphi/2}\|_2^2
=\int e^{r\varphi}\dif m\leq 1+m(\varphi)(e^r-1)
\end{equation*}

The final bound on the norm implied by \eqref{eq:norm-prebound} and the computation above is
\begin{equation*}
\left\|M_{\exp{\left(r\varphi/2\right)}} \,T\, M_{\exp{\left(r\varphi/2\right)}}\right\|_{L^2(X)}
\leq1+\left[\lambda+(1-\lambda)m(\varphi)\right](e^r-1)
\end{equation*}
Let $\beta=\lambda+(1-\lambda)m(\varphi)$. Notice that $m(\varphi)\leq \beta$ because $0\leq m(\varphi)\leq 1$. Combining 
$\|\exp(r\varphi/2)\|_2^2\leq 1+m(\varphi)(e^r-1)$ and our norm bound with \eqref{eq:exp(rphi)-bound} we have
\begin{align*}
\int \exp{(r n A_n)} \dif m
&\leq[1+m(\varphi)(e^r-1)][1+\beta(e^r-1)]^{n-1}\leq [1+\beta(e^r-1)]^{n}
\end{align*}

By optimizing $r$ we have for $\eta\geq\beta$ that 
\begin{align*}
\Lambda_n^*(\eta)&\geq D(\eta \|\, \beta)\\
D(\eta \,\|\, \beta)&\coloneqq-\eta\log{\frac{\beta}{\eta}}-(1-\eta)\log{\frac{1-\beta}{1-\eta}}
\end{align*}
For all $a,b\in[0,1]$ the function $D(a\,\|\, b)$ is the Kullback-Leibler divergence of two probability distributions on the space of two points, the distributions being $(a,1-a)$ and $(b,1-b)$.

Combining the last bound with \eqref{eq:markov-exp} and \eqref{eq:log-gen-norm} we see that for $\eta\geq\beta$
\begin{equation}\label{eq:ld-simple}
m\left(y\in Y \mid A_n(y) \geq \eta\right)\leq
\exp\left[-n D(\eta \|\,\beta) \right]
\end{equation}

If $\varphi=\chi_A$ for some $A\in\mathcal{X}$ then this bound is exactly the Chernoff-Hoeffding large deviations theorem for i.i.d.\ random variables for occupation of a set of measure $\beta$ instead of $m(A)$. A pleasant feature of our bound is that by taking $\lambda=0$ which is the case for i.i.d.\ random variables
\footnote{The corresponding dynamical system is just the one-sided Bernoulli shift and the state space is the the alphabet of the shift. The map between the two is the projection of a sequence to its first entry.} 
we recover the Chernoff-Heoffding theorem in full strength.

\subsubsection{Norm Exponentiation}
What happens if $\eta>m(\varphi)$ but $\eta\leq\beta\coloneqq \lambda+(1-\lambda)m(\varphi)$? In this case the results of the previous section do not apply verbatim. We follow an established strategy to overcome this difficulty which consists considering the $S^k$ dynamics for high enough $k$, \cite[Proof of Lemma 12.1.13]{PointsOnSphere}.

The bound from the previous section can be improved for $\varphi$ with $m(\varphi)$ small by looking at the $S^k$ dynamics for a fixed $k$. Remember that property (M) implies $T_{S^k}=(T_{S})^k$, hence it has a smaller $L^2_0$ norm, i.e.\ $\|T^k\|_{V^\perp}\leq\lambda^k$.

For $y\in Y$ for which $A_n(y)\geq\eta$ we can for any $1\leq k\leq n$ split the time series $0,1,\ldots,n-1$
into $k$ arithmetic  progressions of length  $>n/k-1$ such that for at least one of them the empirical average of $\phi$ is $\geq\eta$. By applying estimate \eqref{eq:ld-simple} to each such arithmetic progression we have for any
\footnote{If $k>n$ the following bound is trivially correct as in this case $n/k-1<0$.} 
$k\in\mathbb{N}$ such that $\eta\geq \lambda^k+(1-\lambda^k)m(\varphi)$
\begin{align}
&m\left(y\in Y \mid A_n(y) \geq \eta\right)
\leq k\exp\left[-n(\frac{1}{k}-\frac{1}{n}) D(\eta \|\, \lambda^k+(1-\lambda^k)m(\varphi)) \right]
\label{eq:lambda_spec_exp_bound}\\
&=k\exp\left[D(\eta \|\, \lambda^k+(1-\lambda^k)m(\varphi))\right] \exp\left[-n \frac{D(\eta \|\, \lambda^k+(1-\lambda^k)m(\varphi))}{k} \right]
\nonumber\\
&\leq k\exp\left[D(\eta \|\,m(\varphi))\right] \exp\left[-n \frac{D(\eta \|\, \lambda^k+(1-\lambda^k)m(\varphi))}{k} \right]
\nonumber
\end{align}
In the last line we have used that if $\eta\geq \lambda^k+(1-\lambda^k)m(\varphi)\geq m(\varphi)$ then $D(\eta \|\, \lambda^k+(1-\lambda^k)m(\varphi))\leq D(\eta \|\,m(\varphi))$.

\begin{remark}
The multiplicative factor $\exp{D(\eta \|\, m(\varphi))}$ can be optimized a little bit further by noticing that the length of the arithmetic progression is actually $\geq n/k-(k-1)/k$ and keeping the dependence on $k$. As our main interest lies in the exponential rate constant for large $n$, this improvement will be negligible for us and will be unnecessary notationally cumbersome. 
\end{remark}

We now make a comfortable change of variables. We denote $\theta=\lambda^k$, then $k=\log\theta/\log\lambda$ and the condition $k\in\mathbb{N}$ becomes $\theta\in\left\{\lambda^k\mid k\in\mathbb{N}\right\}$. For convenience's sake we use the non-standard notation $\lambda^\mathbb{N}\coloneqq \left\{\lambda^k\mid k\in\mathbb{N}\right\}$. Notice that $\lambda^\mathbb{N}\subseteq (0,\lambda]$.

\begin{thm}\label{thm:LD-specgap}
Let $\phi\colon Y\to[0,1]$ be a $\mathcal{X}$-measurable function. Let $T$ be the averaging operator corresponding to $U_S$ on $X$. Denote $\lambda\coloneqq\|T\|_{L^2_0(X)}\leq 1$. For any $\theta\in\lambda^\mathbb{N}\coloneqq\left\{\lambda^k\mid k\in\mathbb{N}\right\}\subseteq(0,\lambda]$ we have
\begin{enumerate}[label=(\alph*)]
\item If $\eta\geq m(\varphi)$ and $\theta\leq\frac{\eta-m(\varphi)}{1-m(\varphi)}$ then
\begin{align*}
m\left(y\mid \frac{1}{n}\sum_{i=0}^{n-1} \varphi(S^n y)\geq \eta\right)
\leq&\frac{\log\theta}{\log\lambda}\exp{D(\eta \|\, m(\varphi))}\\
&\times\exp\left[-n(-\log\lambda)\frac{D(\eta \|\, \theta+(1-\theta)m(\varphi))}{-\log\theta}\right]
\nonumber
\end{align*}
\item If $\eta\leq m(\varphi)$ and $\theta\leq\frac{m(\varphi)-\eta}{m(\varphi)}$ then
\begin{align*}
m\left(y\mid \frac{1}{n}\sum_{i=0}^{n-1} \varphi(S^n y)\leq \eta\right)
\leq&\frac{\log\theta}{\log\lambda}\exp{D(\eta \|\, m(\varphi))}\\
&\times\exp\left[-n(-\log\lambda)\frac{D(\eta \|\,(1-\theta)m(\varphi))}{-\log\theta}\right]
\nonumber
\end{align*}
\end{enumerate}
\end{thm}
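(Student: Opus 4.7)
The upper-tail statement (a) is essentially already assembled in the discussion leading up to equation \eqref{eq:lambda_spec_exp_bound}; the plan is to record the change of variables and verify that all conditions line up. Starting from the inequality
\begin{equation*}
m(A_n \geq \eta) \leq k\exp\bigl[D(\eta \,\|\, m(\varphi))\bigr] \exp\!\left[-\frac{n}{k}\, D\bigl(\eta \,\|\, \lambda^k + (1-\lambda^k) m(\varphi)\bigr)\right],
\end{equation*}
which is valid whenever $\eta \geq \lambda^k + (1-\lambda^k) m(\varphi)$, I substitute $\theta = \lambda^k$, so that $k = \log\theta/\log\lambda$ and the exponent $-n/k$ becomes $-n(-\log\lambda)/(-\log\theta)$. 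The hypothesis on $\theta$ in the theorem, namely $\theta \leq (\eta - m(\varphi))/(1 - m(\varphi))$, is precisely a rearrangement of $\eta \geq \theta + (1-\theta) m(\varphi)$, so the bound (a) drops out directly.

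For the lower-tail statement (b) the plan is to apply (a) to the auxiliary function $\psi \coloneqq 1 - \varphi$, which is again $\mathcal{X}$-measurable, takes values in $[0,1]$, and has mean $m(\psi) = 1 - m(\varphi)$. The event $\{A_n(\varphi)(y) \leq \eta\}$ coincides with $\{A_n(\psi)(y) \geq 1 - \eta\}$, so I apply (a) with threshold $\tilde\eta = 1-\eta$. The condition $\tilde\eta \geq m(\psi)$ is $\eta \leq m(\varphi)$, and the condition
\begin{equation*}
\theta \leq \frac{\tilde\eta - m(\psi)}{1 - m(\psi)} = \frac{(1-\eta) - (1-m(\varphi))}{m(\varphi)} = \frac{m(\varphi) - \eta}{m(\varphi)},
\end{equation*}
matches the hypothesis of (b). To translate the conclusion I use the elementary identity $D(1-a \,\|\, 1-b) = D(a \,\|\, b)$: this converts $D(\tilde\eta \,\|\, m(\psi))$ into $D(\eta \,\|\, m(\varphi))$ and $D(\tilde\eta \,\|\, \theta + (1-\theta)m(\psi))$ into $D(\eta \,\|\, (1-\theta)m(\varphi))$ after simplifying $1 - \theta - (1-\theta)(1-m(\varphi)) = (1-\theta)m(\varphi)$.

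The main obstacle is really a bookkeeping one: checking that the monotonicity argument used to replace $D(\eta\,\|\,\lambda^k + (1-\lambda^k)m(\varphi))$ by $D(\eta\,\|\,m(\varphi))$ in the prefactor remains valid exactly under the stated hypothesis, and that the sign conventions for $\log \lambda$ and $\log \theta$ (both non-positive) combine correctly so that the quantity $(-\log\lambda)/(-\log\theta)$ appearing in the exponent is non-negative and equals $1/k$. Neither of these is substantive, so once the substitutions and the duality $\varphi \leftrightarrow 1-\varphi$ are in place, the theorem follows from what has already been established.
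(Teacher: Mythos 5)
Your proposal is correct and follows the paper's own route exactly: part (a) is the change of variables $\theta=\lambda^k$ in the bound \eqref{eq:lambda_spec_exp_bound}, and part (b) is part (a) applied to $1-\varphi$ together with the symmetry $D(1-a\,\|\,1-b)=D(a\,\|\,b)$. The bookkeeping you carry out (translating the $\theta$-hypotheses into $\eta\geq\theta+(1-\theta)m(\varphi)$ and simplifying $1-\theta-(1-\theta)(1-m(\varphi))=(1-\theta)m(\varphi)$) is precisely what the paper leaves implicit, and it all checks out.
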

\begin{proof}
Part (a) is \eqref{eq:lambda_spec_exp_bound}.
Part (b) is part (a) applied to the function $1-\varphi$.  
\end{proof}
\section{Walks on \texorpdfstring{$S$}{S}-Algebraic Quotients and Hecke Operators}
In this section we show that some walks on $S$-algebraic quotients fall under the framework of dynamical systems with a state space having property (M). Moreover, the associated averaging operators will be variants of the classical Hecke operators on the modular surface.

The archetypal example is the walks on the $p$-Hecke graph embedded in 
\begin{equation*}
X\coloneqq\lfaktor{\mathbf{PGL}_2(\mathbb{Z})}{\mathbf{PGL}_2(\mathbb{R})}
\end{equation*} 
Set
\begin{equation*}
Y\coloneqq\lfaktor{\mathbf{PGL}_2(\mathbb{Z}[1/p])}{\mathbf{PGL}_2(\mathbb{R})\times \mathbf{PGL}_2(\mathbb{Q}_p)}
\end{equation*}
and equip $Y$ with the right action of the diagonal element $a=\begin{pmatrix}
p & 0 \\ 0 & 1\end{pmatrix}\in\mathbf{PGL}_2(\mathbb{Q}_p)$. Denote $K\coloneqq \mathbf{PGL}_2(\mathbb{Z}_p)$, we recall that there is a natural identification $X\cong\faktor{Y}{K}$. Let $\pi\colon Y\to X$ be the natural projection $y\mapsto yK$.

One can understand the Hecke neighbors of a point $x\in X$ by taking its lift to $Y$ which is the set $xK$, apply $a$ to have $xKa^{-1}$ and project back to $X$ which results in $\pi(xKa^{-1})=x Ka^{-1}K$. The double coset $Ka^{-1}K$ can be written as finite union of size $p+1$ of right $K$ cosets. This gives rise to exactly $p+1$ Hecke neighbors of $x\in X$.

More generally, each point $y\in Y$ projects to a walk on the Hecke graph in $X$ emanating from $\pi(x)\in X$. this walk is given by 
\begin{equation*}
\left(\ldots,\pi(ya^{2}),\pi(ya),\pi(y),\pi(ya^{-1}),\pi(ya^{-2}),\ldots\right)
\end{equation*}

One would like to use the spectral gap results for the $p$-Hecke operator to derive 
a large deviation result in this setting. The flavor of the results we are after is that the measure of points $y\in Y$ whose walks of length $n$ are all contained in a small ball in $X$ decays exponentially with $n$.

Unfortunately, the state space $X$ does not have property (M) for the system $Y$ with the right $a$-action. In particular, the results of Proposition \ref{prop:M-basic-properties} do not hold for the corresponding averaging operator -- the $p$-Hecke operator. For example, $T_{p^2}\neq {(T_p)}^2$.

To amend the situation one replaces $X$ with a finer state space. This is achieved by dividing $Y$ on the right by a smaller compact open sugbroup -- the Iwahori subgroup $I$.

More generally one can consider walks on quotients of $S$-algebraic groups associated to Bruhat-Tits buildings, we construct such walks explicitly and study their relevant properties in \S \ref{sec:walks-buildings}.
\subsection{General Setting}
\subsubsection{Standing Assumptions}\label{sec:standing}
Let $G$ be a second-countable locally compact topological group, such a group is necessarily metrizable with a left invariant proper
\footnote{A metric space is said to be proper if any closed ball is compact.} 
metric \cite{Struble}. Notably, it is $\sigma$-compact.
Fix a lattice
\footnote{Necessarily, $G$ must be unimodular if it has any lattice at all.} 
$\Gamma<G$, set $Y\coloneqq\lfaktor{\Gamma}{G}$. Recall that $\Gamma$ acts freely and properly discontinuously on $G$, in particular, the space $Y$ is locally compact and a left invariant metric on $G$ induces a metric on $Y$.

Denote by $\mathcal{Y}$ the Borel $\sigma$-algebra of $Y$. The space $Y$ carries a unique $G$ invariant probability measure which we denote by $m$. The space $Y$ is not necessarily compact. All $L^q$ spaces will be with respect to this measure $m$.

Let $K<G$ be a compact subgroup. Suppose that $N_G(K)<G$ is an open subgroup of $G$. We define $X\coloneqq\faktor{Y}{K}=\sdfaktor{\Gamma}{G}{K}$ and denote by $\pi\colon Y\to X$ the continuous quotient map. Set $\mathcal{X}$ to be the Borel $\sigma$-algebra of $X$ which we treat as a subalgebra of $\mathcal{Y}$ using the map $\pi$. 

The measure $m$ can be pushed forward to $X$ using $\pi$; by abuse of notation, we denote the pushed forward measure by $m$ as well.
Because of the universal property of the quotient map, there is a natural identification for any $1\leq q \leq\infty$
\begin{equation*}
L^q(X)\cong L^q(Y)^K
\end{equation*}
where the right hand side is the space of $K$ invariant functions.

To be consistent with the right action by inverses, for a subset $F\subseteq G$ for any $y\in Y$ we define $yF\coloneqq \left\{ yf^{-1} \mid f\in F \right\}$.

There is a residual action of the open subgroup $N_G(K)$ on $X$. In particular, for any identity neighborhood $B\subset N_G(K)$ and for all $x\in X$ the set $xB$ is an open neighborhood of $x$. Moroever, these sets form a base for the topology of $X$ around $x$.

Fix $a\in G$. We are interested in the dynamics of the right action by $a$ on $Y$. Obviously, if we want the induced averaging operator on $X$ to have $\|T\|_{L^2_0(X
)}<1$ the space $X$ can not carry an $a$ action, hence $a$ can not belong to $N_G(K)$.

\subsubsection{\texorpdfstring{$S$}{S}-Algebraic Groups}
All our applications will be in the following setting. Let $\mathbf{G}$ be a linear reductive algebraic group defined over $\mathbb{Q}$. Fix $S$ a finite set of places for $\mathbb{Q}$ including at least one finite place. If $\mathbf{G}(\mathbb{R})$ is not compact then $S$ must include the infinite place.

Define $G=G_S\coloneqq\prod_{v\in S} \mathbf{G}(\mathbb{Q}_v)$ and let $\Gamma<G_S$ be a congruence lattice. Let $p\in S$ be a finite place and set $K<\mathbf{G}(\mathbb{Q}_p)$ a compact open subgroup. We have $N_{G_S}(K)\cong\prod_{v\in S\setminus\{p\}} \mathbf{G}(\mathbb{Q}_v)\times N_{\mathbf{G}(\mathbb{Q}_p)}(K)$ which is an open subgroup of $G_S$.

Not any subgroup $K$ will define a state space with property (M). Nevertheless, we shall construct using the action of $\mathbf{G}(\mathbb{Q}_p)$ on the associated affine Bruhat-Tits building natural examples of subgroups $K$ which exhibit property (M) for the right action by $a$.

\subsection{Criteria for Property (M)}
We now discuss what properties of $K$ and $a$ insures that the dynamical system given by the right action of $a$ on $Y$ has property (M) with respect to the state space $X$. 
\begin{defi}
For each $s,t\in\mathbb{Z}$, $t\geq s$, define
\begin{equation*}
K^{(s,t)}=\bigcap_{s\leq i\leq t} a^{-i} K a^i
\end{equation*}
and
\begin{equation*}
K^{(-\infty,t)}\coloneqq\bigcap_{-\infty<i\leq t} a^{-i}K a^i
\end{equation*}

This is a compact subgroup of $G$.
\end{defi}
Notice that if $s'\leq s\leq t\leq t'$ then $K^{(s',t')}\leq K^{(s,t)}$. Moreover, the orthogonal projection $L^2(X)^{K^{(s',t')}}\to L^2(X)^{K^{(s,t)}}$ is given by
\begin{equation*}
f\mapsto f^{K^{(s,t)}}(x)\coloneqq\int_{K^{(s,t)}} f(xk^{-1}) \dif k
\textrm{  for }\mu\textrm{-almost every } x\in X
\end{equation*}
where we integrate with respect to the unique probability Haar measure on a compact subgroup.

Denote by $U\colon L^2(Y)\to L^2(Y)$ the Koopman operator associated to the right action by $a$ on $Y$. A computation shows that $U\left(L^2(X)^{K^{(s,t)}}\right)\subseteq L^2(X)^{K^{(s+1,t+1)}} \subseteq L^2(X)^{K^{(s,t+1)}}$.
\begin{lem}\label{lem:K-diagram}
Suppose that for all $n\in\mathbb{N}$ the following diagram commutes
\begin{center}
	\begin{tikzcd}
		L^2(Y)^{K^{(0,n)}} \arrow{r}{U} \arrow{d} &L^2(Y)^{K^{(0,n+1)}}  \arrow{d}\\
		L^2(Y)^{K^{(0,n-1)}} \arrow{r}{U}  & L^2(Y)^{K^{(0,n)}} \\
	\end{tikzcd}
\end{center}
Then the system $Y$ with the right $a$-action has property (M) with respect to the state space $X$.
\end{lem}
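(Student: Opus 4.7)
The natural candidate for the $L^\infty(X)$-module witnessing property (M) is the ascending union $\mathcal{M} := \overline{\bigcup_{n \geq 0} \mathcal{M}_n}$, where $\mathcal{M}_n := L^2(Y)^{K^{(0,n)}}$; the chain is ascending because $K^{(0,n+1)} \subseteq K^{(0,n)}$. The structural checks are quick: $L^2(X) = \mathcal{M}_0 \subseteq \mathcal{M}$; since $K^{(0,n)} \subseteq K$, any $h \in L^\infty(X) = L^\infty(Y)^K$ is also $K^{(0,n)}$-invariant, so multiplication by $h$ preserves each $\mathcal{M}_n$ and hence $\mathcal{M}$; and the discussion preceding the lemma already gives $U(\mathcal{M}_n) \subseteq L^2(Y)^{K^{(1,n+1)}} \subseteq L^2(Y)^{K^{(0,n+1)}} = \mathcal{M}_{n+1}$, using $K^{(0,n+1)} \subseteq K^{(1,n+1)}$, so $\mathcal{M}$ is $U$-stable.

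The heart of the argument is the identity $P_X U = P_X U P_X$ on $\mathcal{M}$. Write $E_k$ for the orthogonal projection $L^2(Y) \to \mathcal{M}_k$, which is realized by averaging against the Haar measure on $K^{(0,k)}$; in particular $P_X = E_0$, and $E_j E_k = E_{\min(j,k)}$ because the subspaces $\mathcal{M}_k$ are nested. The hypothesized commuting diagram translates to the identity $E_n U f = U E_{n-1} f$ for every $f \in \mathcal{M}_n$. I would prove by induction on $n$ the statement
\begin{equation*}
E_0 U f = E_0 U E_0 f \quad \text{for all } f \in \mathcal{M}_n.
\end{equation*}
The base case $n=0$ is tautological. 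For the inductive step, use $E_0 = E_0 E_n$ (since $\mathcal{M}_0 \subseteq \mathcal{M}_n$) together with the commutation hypothesis to write
\begin{equation*}
E_0 U f = E_0 E_n U f = E_0 U E_{n-1} f,
\end{equation*}
and then apply the induction hypothesis to $E_{n-1} f \in \mathcal{M}_{n-1}$, obtaining $E_0 U E_{n-1} f = E_0 U E_0 (E_{n-1} f) = E_0 U E_0 f$.

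The identity $P_X U f = P_X U P_X f$ thus holds on the dense subspace $\bigcup_n \mathcal{M}_n \subseteq \mathcal{M}$, and extends to all of $\mathcal{M}$ by boundedness of $U$ and $P_X$. There is no serious obstacle: the commuting-diagram hypothesis is tailored so that the induction step closes in one line. The main conceptual point is identifying the correct $\mathcal{M}$—a single $\mathcal{M}_n$ fails to be $U$-stable, while $L^2(Y)$ itself would not in general satisfy the collapse $P_X U P_X = P_X U$; the right choice is the smallest closed $U$-stable $L^\infty(X)$-module containing $L^2(X)$, which is precisely $\mathcal{M}$.
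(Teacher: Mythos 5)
Your proof is correct and follows essentially the same route as the paper: an induction on $n$ using the commuting-diagram hypothesis to establish $P_X U f = P_X U P_X f$ on each $L^2(Y)^{K^{(0,n)}}$, then passing to the closure of the ascending union $\mathcal{M}$, which is exactly the module $\mathcal{M}_a$ the paper exhibits for property (M). Your explicit verification of the $L^\infty(X)$-module structure and $U$-stability simply spells out details the paper leaves implicit.
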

\begin{proof}
First we show that $P_X U P_X\,f=P_X U\,f$ for all $f\in L^2(Y)^{K^{(0,n)}}$ and for all $n\geq 0$. This is proven by induction. The case $n=0$ is trivial. The induction step from $n-1$ to $n$ follows from the complete commutativity of the following diagram.
\begin{center}
	\begin{tikzcd}
		L^2(Y)^{K^{(0,n)}} \arrow{rr}{U} \arrow{d}\arrow[bend right=80]{dd}[left]{P_X} & & L^2(Y)^{K^{(0,n+1)}}  \arrow{d}\arrow[bend left=80]{dd}[right]{P_X}\\
		L^2(Y)^{K^{(0,n-1)}} \arrow{d}{P_X } \arrow{rr}{U}  & & L^2(Y)^{K^{(0,n)}} \arrow{d}{P_X} \\
		L^2(Y)^K \arrow{r}{U}& L^2(Y)^{K^{(0,1)}}\arrow{r}{P_X} & L^2(Y)^K
	\end{tikzcd}
\end{center}
The commutativity of the lower square is the induction assumption and the commutativity of the upper square is the hypothesis of the lemma.

The claim we have just proven implies that $P_X U P_X\,f=P_X U\,f$ for all $f\in \bigoplus_{n=0}^k L^2(Y)^{K^{(0,n)}}$ for all $k\in\mathbb{N}$. Using the continuity of $P_X$ and $U$ we see that $P_X U P_X\,f=P_X U\,f$ for all $f\in \mathcal{M}_a\coloneqq\widehat{\bigoplus}_{n=0}^\infty L^2(Y)^{K^{(0,n)}}$. The subspace $\mathcal{M}_a$ is a closed $U$-stable $L^\infty(X)$-module that satisfies the requirements for property (M).
\end{proof}
\begin{cor}\label{cor:M-criterion}
Assume that there are $\omega_1,\ldots,\omega_k\in K^{(-\infty,0)}$ such that
\begin{equation*}
K=\coprod_{j=1}^k \omega_j K^{(0,1)}
\end{equation*}
Then the system $Y$ with the right $a$-action has property (M) with respect to the state space $X$.
\end{cor}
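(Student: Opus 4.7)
The plan is to invoke Lemma~\ref{lem:K-diagram}, so it suffices to verify that for every $n\ge 1$ and every $f\in L^2(Y)^{K^{(0,n)}}$ one has $(Uf)^{K^{(0,n)}} = U\bigl(f^{K^{(0,n-1)}}\bigr)$. Expanding both sides as averages and applying the change of variables $k\mapsto aka^{-1}$ in the right-hand integral reduces this to the pointwise identity
\begin{equation*}
\int_{K^{(0,n)}} f(yk^{-1}a^{-1})\,dk \;=\; \int_{K^{(1,n)}} f(yk^{-1}a^{-1})\,dk,
\end{equation*}
where $K^{(1,n)}=a^{-1}K^{(0,n-1)}a$. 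A direct computation shows that the function $F\colon k\mapsto f(yk^{-1}a^{-1})$ is left-invariant under $K^{(1,n+1)}=a^{-1}K^{(0,n)}a$, since $F(k_0 k) = F(k)$ forces $ak_0^{-1}a^{-1}\in K^{(0,n)}$.

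The main technical step is to extract from the hypothesis compatible coset decompositions of $K^{(0,n)}$ and $K^{(1,n)}$ sharing a common set of representatives. Setting $\omega_j^{(n)}\coloneqq a^{-n}\omega_j a^n$, conjugation of the hypothesis by $a^{-n}$ gives $a^{-n}Ka^n = \coprod_j \omega_j^{(n)} K^{(n,n+1)}$. Because $\omega_j\in K^{(-\infty,0)}$, the element $\omega_j^{(n)}$ lies in $K^{(-\infty,n)}$, which is contained both in $K^{(0,n-1)}$ and in $K^{(1,n-1)}$. Intersecting the above decomposition against each of these two subgroups, and using the identities $K^{(n,n+1)}\cap K^{(0,n-1)}=K^{(0,n+1)}$ and $K^{(n,n+1)}\cap K^{(1,n-1)}=K^{(1,n+1)}$, produces
\begin{equation*}
K^{(0,n)} = \coprod_j \omega_j^{(n)}\, K^{(0,n+1)}, \qquad K^{(1,n)} = \coprod_j \omega_j^{(n)}\, K^{(1,n+1)}.
\end{equation*}

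Inverting these yields matching $K^{(0,n+1)}$- and $K^{(1,n+1)}$-left-coset decompositions with common representatives $(\omega_j^{(n)})^{-1}$. Since $F$ is constant on every $K^{(1,n+1)}$-left-coset and $K^{(0,n+1)}\le K^{(1,n+1)}$, both integrals collapse to the same finite average $|J|^{-1}\sum_j F\bigl((\omega_j^{(n)})^{-1}\bigr)$, proving the identity and hence property (M). The main subtlety is keeping the two decompositions compatible across all $n$; this is precisely what the hypothesis $\omega_j\in K^{(-\infty,0)}$ is designed to guarantee, as it ensures $\omega_j^{(n)}\in K^{(-\infty,n)}$ at every level rather than merely lying in $K$.
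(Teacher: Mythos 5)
Your proof is correct and follows essentially the same route as the paper: you reduce to Lemma~\ref{lem:K-diagram} and verify the commutativity of the diagram via the coset decompositions of $K^{(0,n)}$ (and its conjugate $K^{(1,n)}$) into cosets of $K^{(0,n+1)}$ (resp.\ $K^{(1,n+1)}$) with common representatives $a^{-n}\omega_j a^{n}$, which is exactly the decomposition the paper extracts from the hypothesis, and then both compositions collapse to the same finite average $\frac{1}{k}\sum_j f(y\,a^{-n}\omega_j a^{n-1})$. The only cosmetic differences are that you obtain the decomposition by a single conjugate-and-intersect step rather than the paper's induction on $K^{(-n,0)}=\coprod_j\omega_j K^{(-n,1)}$, and that you write out explicitly the final ``direct computation'' the paper leaves to the reader.
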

\begin{proof}
We claim that for any integer $n\geq0$
\begin{equation}\label{eq:cosets-negative}
K^{(-n,0)}=\coprod_{j=1}^k \omega_j K^{(-n,1)}
\end{equation}

We prove this by induction. The case $n=0$ is just the hypothesis of the corollary. We assume the claim for $n-1$ and prove it for $n$
\begin{align*}
K^{(-n,0)}&=a^n K a^{-n} \cap K^{(-(n-1),0)}=a^n K a^{-n} \cap \coprod_{j=1}^k \omega_j K^{(-(n-1),1)}\\
&=\coprod_{j=1}^k \left(a^n K a^{-n} \cap \omega_j K^{(-(n-1),1)}\right)
=\coprod_{j=1}^k \omega_j\left(a^n K a^{-n} \cap  K^{(-(n-1),1)}\right)\\
&=\coprod_{j=1}^k \omega_j K^{(-n,1)}
\end{align*}
In the second line we have used that $\omega_j\in a^n K a^{-n}$ for all $j$.

We conjugate equation \eqref{eq:cosets-negative} by $a^{-(n-1)}$ and have
\begin{equation*}
K^{(0,n-1)}=\coprod_{j=1}^k \left(a^{-(n-1)}\omega_j a^{n-1}\right) K^{(0,n)}
\end{equation*}
We can now check that the diagram of Lemma \ref{lem:K-diagram} does commute. The projection $L^2(Y)^{K^{(0,n)}}\to L^2(Y)^{K^{(0,n-1)}}$ is given for any $n$ by
\begin{equation*}
f(x)\mapsto\frac{1}{k}\sum_{j=1}^k f\left(x a^{-(n-1)}\omega_j a^{n-1}\right)
\end{equation*}
Applying this formula for $n$ and $n+1$ shows that the required diagram commute by a direct computation.
\end{proof}
\begin{remark}[Transferring property (M)]
Notice that the hypothesis of Corollary \ref{cor:M-criterion} depends only on the subgroups $K^{(s,t)}$ for $s\leq t$ integers. If we prove that $K<G$ and $a\in G$ satisfy the conditions of Corollary \ref{cor:M-criterion} then for any other $K'<G'$ and $a'\in G'$ such that $K'^{(s,t)}\cong K^{(s,t)}$ for all $s\leq t$ we have property (M) for the spaces associated to $G'$, $K'$ with the right action by $a'$. 

This allows us to verify property (M) on a complicated space by studying a simpler one.
\end{remark}
\subsection{Walks on Buildings}\label{sec:walks-buildings}
Let now $G$ be a group acting by simplicial automorphisms on a locally finite thick affine building $\Delta$. Denote by $G^\bullet$ the subgroup of type-preserving transformations. Assume that $G^\bullet$ acts strongly transitively on the building $\Delta$, that is the action is transitive on pairs $(\mathcal{C}, \mathcal{A})$ of a chamber $\mathcal{C}$ and an apartment $\mathcal{A}$ containing $\mathcal{C}$.

In this section we construct subgroups of $G$ for which the hypothesis of Corollary \ref{cor:M-criterion} holds with respect to the action by a specific element $a\in G$. This generalizes non-backtracking walks on the Bruhat-Tits tree of $\mathbf{PGL}_2(\mathbb{Q}_p)$.

Fix an apartment $\mathcal{A}\subset\Delta$. Let $N<G$ be the stabilizer of the apartment and denote by $N_\mathrm{trans}$ the elements acting on the apartment $\mathcal{A}$ by translations. Fix $e\neq a\in N_\mathrm{trans}$. We are interested in the action of this $a$.

Fix a special vertex $v_0\in\mathcal{A}$. Let $v_i=a^i.v_0$ for all $i\in\mathbb{Z}$. 
Assume that $v_0$ and $v_1$ \emph{do not} share a common wall. This is a regularity condition on $a$.

Let $\mathcal{S}^+\subset\mathcal{A}$ be a sector in the apartment $\mathcal{A}$ emanating from $v_0$ and including $v_1$.  This sector is unique.
Let $\mathcal{C}^+$ be the chamber in $\mathcal{A}$ which includes $v_0$ and lies at the base of $\mathcal{S}^+$.

Denote by $\mathcal{S}^-$ the unique sector emanating from $v_0$ and including $\mathcal{C}^+$. As $v_0$ and $v_1$ do not share a common wall the sectors $\mathcal{S}^+$ and $\mathcal{S}^-$ have opposite orientations. Let $\mathcal{C}^-$ be the chamber lying at the base of $\mathcal{S}^-$.

Set $I^+<G$ and $I^-<G$ to be the Iwahori subgroups fixing $\mathcal{C}^+$ and $\mathcal{C}^-$ respectively. The subgroup we study is the \emph{arrow} subgroup
\begin{equation*}
K_{\to}=I^+\cap I^-
\end{equation*}

If the  $v_0$ and $v_1$ belong to the same chamber then actually $\mathcal{C}^+=\mathcal{C}^-$ and $K_\to$ is an Iwahori subgroup. 

If $\mathcal{C}^+\neq\mathcal{C}^-$ then the arrow subgroup stabilizes more then just the two chambers $\mathcal{C}^+$, $\mathcal{C}^-$.
\begin{defi}
Let $\mathcal{A}_0$ be an apartment in the building $\Delta$ and let $w_0,w_1\in\mathcal{A}_0$ be two special vertices in the apartment which do not lie on a common wall. Let $\mathcal{S}_0$ be the unique sector in $\mathcal{A}_0$ emanating from $w_0$ and including $w_1$. Let $\mathcal{S}_1$ be the sector emanating from $w_1$ with opposite orientation to $\mathcal{S}_0$. We call the intersection $\mathcal{R}=\mathcal{S}_0\cap\mathcal{S}_1$ a \emph{rhomboid} with corners $w_0$ and $w_1$. It is a bounded convex subset in the apartment $\mathcal{A}_0$ containing $w_0$ and $w_1$. 
\end{defi}

\begin{figure}[h]
	\centering
	\includegraphics[width=\textwidth]{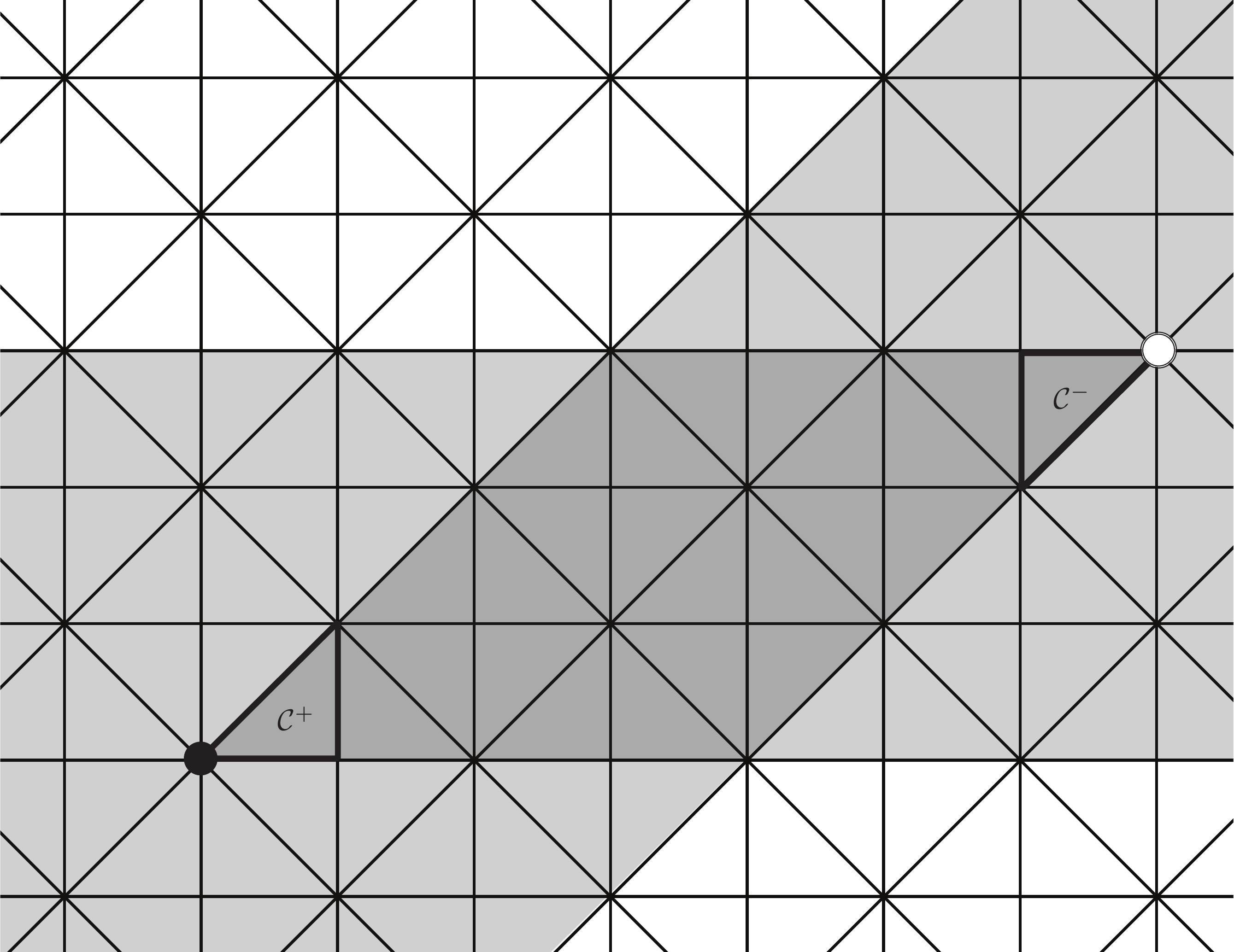}
	\caption{An apartment in a  $\widetilde{C_2}$ building.}
\end{figure}

\begin{prop}
The arrow subgroup $K_\to$ stabilizes pointwise the whole rhomboid $\mathcal{R}$ with corners $v_0$ and $v_1$.
\end{prop}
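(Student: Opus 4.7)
The plan is to deduce the claim from two ingredients: (i) a combinatorial identification of $\mathcal{R}$ with the union of closed chambers appearing on minimal galleries from $\mathcal{C}^+$ to $\mathcal{C}^-$ in the apartment $\mathcal{A}$; (ii) an inductive argument showing that any $g \in K_\to$ fixes each such chamber pointwise.

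For (i), I would argue that $\mathcal{R} = \mathcal{S}^+ \cap \mathcal{S}^-$ coincides with the intersection of all closed half-apartments in $\mathcal{A}$ bounded by walls that do \emph{not} separate $\mathcal{C}^+$ from $\mathcal{C}^-$. The hypothesis that $v_0$ and $v_1$ do not share a common wall guarantees that $\mathcal{S}^+$ and $\mathcal{S}^-$ point in genuinely opposite directions and that the walls through $v_0$ cutting out $\mathcal{S}^+$, together with the walls through $v_1$ cutting out $\mathcal{S}^-$, exhaust the non-separating walls meeting $\mathcal{R}$. A standard Coxeter-theoretic fact then identifies this intersection with the union of closed chambers that lie on some minimal gallery from $\mathcal{C}^+$ to $\mathcal{C}^-$, because a minimal gallery crosses each separating wall exactly once and stays on the correct side of every non-separating wall.

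For (ii), fix $g \in K_\to$; by definition $g$ fixes $\mathcal{C}^+$ and $\mathcal{C}^-$ pointwise. Given any minimal gallery $\mathcal{C}^+ = \mathcal{C}_0, \mathcal{C}_1, \ldots, \mathcal{C}_m = \mathcal{C}^-$ in $\mathcal{A}$, I induct on $k$ to show that $g$ fixes $\mathcal{C}_k$ pointwise. The base case $k=0$ is the hypothesis. For the inductive step, suppose $g$ fixes $\mathcal{C}_k$ pointwise, and let $\sigma_k = \mathcal{C}_k \cap \mathcal{C}_{k+1}$ be the common panel, which is then also fixed pointwise by $g$. Since $g$ is a simplicial automorphism fixing $\mathcal{C}^-$ setwise, it preserves the gallery distance to $\mathcal{C}^-$ of every chamber containing $\sigma_k$. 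Minimality of the original gallery forces $\mathcal{C}_{k+1}$ to be the unique chamber containing $\sigma_k$ at distance $m-(k+1)$ from $\mathcal{C}^-$ (it is the projection $\mathrm{proj}_{\sigma_k}(\mathcal{C}^-)$; any chamber $\mathcal{C}'\ni \sigma_k$ with $d(\mathcal{C}',\mathcal{C}^-) \leq m-(k+1)-1$ would yield, via $\mathcal{C}_0,\ldots,\mathcal{C}_k,\mathcal{C}',\ldots,\mathcal{C}^-$, a gallery shorter than $m$). Hence $g \cdot \mathcal{C}_{k+1} = \mathcal{C}_{k+1}$ as a simplex. Since $g$ already fixes every vertex of $\sigma_k$, which is all but one of the vertices of $\mathcal{C}_{k+1}$, the remaining vertex is also fixed by injectivity of $g$ on vertices. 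Thus $g$ fixes $\mathcal{C}_{k+1}$ pointwise.

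Combining (i) and (ii) shows that every $g \in K_\to$ fixes every closed chamber contained in $\mathcal{R}$, and therefore fixes $\mathcal{R}$ pointwise. The step I expect to require the most care is (i): cleanly checking that the sector-theoretic rhomboid equals the gallery-convex hull uses the precise geometry of the apartment, the regularity assumption on $a$ (through the hypothesis on $v_0,v_1$), and the observation that the bounding walls of $\mathcal{S}^+$ through $v_0$ and of $\mathcal{S}^-$ through $v_1$ account for all the non-separating walls adjacent to $\mathcal{R}$. Once (i) is in place, (ii) is essentially the standard gatedness/projection argument in building theory.
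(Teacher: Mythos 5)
Your argument is correct, and its skeleton is the same as the paper's: reduce to showing (1) every chamber of $\mathcal{R}$ lies on some minimal gallery from $\mathcal{C}^+$ to $\mathcal{C}^-$, and (2) any element of $K_\to$ fixes such a gallery pointwise; but you implement both steps differently. For (2) the paper first uses that every minimal gallery between $\mathcal{C}^+$ and $\mathcal{C}^-$ is contained in the apartment $\mathcal{A}$, so the image gallery under $k\in K_\to$ again lies in $\mathcal{A}$, and then thinness of $\mathcal{A}$ forces the gallery to be fixed; your induction works directly in the thick building, identifying $\mathcal{C}_{k+1}$ with the gate $\mathrm{proj}_{\sigma_k}(\mathcal{C}^-)$ of the panel $\sigma_k$ and using that an automorphism fixing $\sigma_k$ pointwise and fixing $\mathcal{C}^-$ preserves distances within the residue. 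Both are valid; yours trades the convexity of apartments for gatedness of residues and so never needs to know that the displaced gallery stays in $\mathcal{A}$. For (1) the paper simply cites Garrett (\S 16.1), while you propose to prove it via the identification of the gallery convex hull of $\{\mathcal{C}^+,\mathcal{C}^-\}$ with an intersection of half-apartments; that route works, and you correctly flag it as the delicate point, but one assertion should be repaired: the walls bounding $\mathcal{S}^+$ at $v_0$ and $\mathcal{S}^-$ at $v_1$ do \emph{not} exhaust the non-separating walls meeting $\mathcal{R}$ (a wall through $v_0$ in a non-simple root direction meets $\mathcal{R}$ at $v_0$ and separates nothing). What you actually need, and what is true, is only the inclusion of $\mathcal{R}$ in every half-apartment containing $\mathcal{C}^+\cup\mathcal{C}^-$: writing a root $\alpha$ that is nonnegative on $\mathcal{S}^+$ as a nonnegative combination of the simple roots cutting out $\mathcal{S}^+$, the defining inequalities $0\leq\alpha_i(x-v_0)\leq\alpha_i(v_1-v_0)$ of $\mathcal{R}$ give $0\leq\alpha(x-v_0)\leq\alpha(v_1-v_0)$ on $\mathcal{R}$; on the other hand a half-apartment $\{\alpha(\cdot-v_0)\geq k\}$ containing $\mathcal{C}^+$ forces $k\leq0$ since $v_0$ lies in the closed chamber $\mathcal{C}^+\subset\mathcal{S}^+$, and one of the form $\{\alpha(\cdot-v_0)\leq k\}$ containing $\mathcal{C}^-$ forces $k\geq\alpha(v_1-v_0)$ since $v_1$ lies in $\mathcal{C}^-\subset\mathcal{S}^-$; hence every such half-apartment contains $\mathcal{R}$. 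Combined with the standard fact that a chamber lies on a minimal gallery from $\mathcal{C}^+$ to $\mathcal{C}^-$ if and only if it is contained in every half-apartment containing both, this yields (1), and together with your gate argument it proves the proposition.
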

\begin{proof}
Every minimal gallery $\mathcal{C}_0,\mathcal{C}_1,\ldots,\mathcal{C}_l$ connecting $\mathcal{C}^+$ and $\mathcal{C}^-$ is contained in the apartment $\mathcal{A}$, see \cite[\S 4.5]{GarrettBuldings}. An element $k\in K_\to$ stabilizes pointwise the chambers $\mathcal{C}^+$ and $\mathcal{C}^-$, hence $k.\mathcal{C}_0,k.\mathcal{C}_1,\ldots,k.\mathcal{C}_l$ is also contained in $\mathcal{A}$. Because an apartment is a thin complex and $k$ stabilizes the first and last chamber of the gallery it must stabilize pointwise the whole minimal gallery.

Every chamber $\mathcal{C}_0\subset\mathcal{R}$ belongs to some minimal gallery connecting $\mathcal{C}^+$ and $\mathcal{C}^-$, see \cite[\S 16.1]{GarrettBuldings}, hence it is stabilized by $K_\to$.

\end{proof}

\begin{prop}\label{prop:arrow-M}
The hypothesis of Corollary \ref{cor:M-criterion} holds for the arrow subgroup $K_\to$ and the right action by $a$. Specifically, there exists $\omega_1,\ldots,\omega_k\in K_\to^{(-\infty,0)}$ such that
\begin{equation*}
K_\to=\coprod_{j=1}^k \omega_j K_\to^{(0,1)}
\end{equation*}
\end{prop}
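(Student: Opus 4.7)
The plan is to identify the cosets of $K_\to^{(0,1)}$ in $K_\to$ with chambers in a certain finite $K_\to$-orbit in the building, and to produce each representative by combining the apartment axioms with strong transitivity.

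First I describe the subgroups geometrically. By the previous proposition, $K_\to$ is the pointwise stabilizer of $\mathcal{R}$, and a direct computation shows $a^{-1}K_\to a$ is the pointwise stabilizer of $a^{-1}.\mathcal{R}$; therefore $K_\to^{(0,1)} = K_\to \cap a^{-1}K_\to a$ is the pointwise stabilizer of the convex strip $\mathcal{R}\cup a^{-1}.\mathcal{R}$ in $\mathcal{A}$. Using that $G^\bullet$ is type-preserving, so that the setwise and pointwise stabilizers of a chamber coincide, I check that $K_\to^{(0,1)} = \{g \in K_\to : g.(a^{-1}.\mathcal{C}^+) = a^{-1}.\mathcal{C}^+\}$, where $a^{-1}.\mathcal{C}^+$ is the corner chamber of $a^{-1}.\mathcal{R}$ at $v_{-1}$: any $g \in K_\to$ stabilizing this chamber fixes it pointwise, and together with its pointwise fixing of $\mathcal{R}$ this propagates along minimal galleries inside $\mathcal{A}$ to a pointwise fixing of the whole strip $\mathcal{R}\cup a^{-1}.\mathcal{R}$. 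Thus the left cosets $K_\to/K_\to^{(0,1)}$ are in bijection with the $K_\to$-orbit of $a^{-1}.\mathcal{C}^+$, and by local finiteness of $\Delta$ together with compactness of $K_\to$ this orbit is finite; enumerate its elements as $\mathcal{D}_1,\ldots,\mathcal{D}_k$.

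Next I produce a representative $\omega_j$ sending $a^{-1}.\mathcal{C}^+$ to $\mathcal{D}_j$. By the apartment axiom for affine buildings there is an apartment $\mathcal{A}_j$ containing both $\mathcal{D}_j$ and the entire sector $\mathcal{S}^+$. Since $\mathcal{A}$ and $\mathcal{A}_j$ share the sector $\mathcal{S}^+$, strong transitivity of $G^\bullet$ yields $\omega_j \in G^\bullet$ that fixes $\mathcal{S}^+$ pointwise and carries $\mathcal{A}$ onto $\mathcal{A}_j$. Because $\mathcal{D}_j = k_j.(a^{-1}.\mathcal{C}^+)$ for some $k_j \in K_\to$ and $k_j$ fixes $\mathcal{C}^+ \subset \mathcal{S}^+$, the chamber $\mathcal{D}_j$ occupies in $\mathcal{A}_j$ the same combinatorial position relative to $\mathcal{S}^+$ as $a^{-1}.\mathcal{C}^+$ occupies in $\mathcal{A}$, so one may arrange $\omega_j.(a^{-1}.\mathcal{C}^+) = \mathcal{D}_j$. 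The sector $\mathcal{S}^+$ contains every vertex $v_i$ with $i\geq 0$ and hence every translate $a^i.\mathcal{R}$ with $i\geq 0$; since $\omega_j$ fixes $\mathcal{S}^+$ pointwise, it fixes each $a^i.\mathcal{R}$ pointwise, which is equivalent to $a^{-i}\omega_j a^i \in K_\to$ for all $i\geq 0$, i.e.\ $\omega_j \in \bigcap_{i\geq 0} a^i K_\to a^{-i} = K_\to^{(-\infty,0)}$. Combined with the bijection of the previous paragraph, the $\omega_j$ give the required decomposition $K_\to = \coprod_{j=1}^k \omega_j K_\to^{(0,1)}$.

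The main technical point I expect to be the obstacle is the building-theoretic lemma that two apartments containing a common sector are related by an element of $G^\bullet$ that pointwise fixes the sector. In the Bruhat--Tits setting this underlies the existence of retractions based at a sector and is a standard consequence of strong transitivity together with the apartment axioms, but it must be invoked carefully rather than derived here; everything else in the proof reduces to bookkeeping about where $a^i.\mathcal{R}$ sits inside $\mathcal{S}^+$.
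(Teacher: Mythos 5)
Your overall architecture is sound and in fact mirrors the paper's proof with the orientations reflected: you identify the cosets of $K_\to^{(0,1)}$ in $K_\to$ with a finite set of chambers, and you produce representatives by strong transitivity as elements fixing a sector pointwise, hence lying in $K_\to^{(-\infty,0)}$; your group-theoretic bookkeeping (e.g.\ that fixing $\mathcal{S}^+$ pointwise gives membership in $\bigcap_{i\geq 0}a^iK_\to a^{-i}=K_\to^{(-\infty,0)}$) is consistent with the definitions. The genuine gap is the central step: the claim that ``by the apartment axiom'' there is an apartment $\mathcal{A}_j$ containing the chamber $\mathcal{D}_j$ together with the \emph{entire} sector $\mathcal{S}^+$. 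No axiom gives this. The building axioms provide a common apartment for two simplices, and the standard sector statements only provide an apartment containing a given chamber and a \emph{subsector} of a given sector. The full statement is false for a general chamber: already in a tree, an edge branching off an interior vertex of a ray lies on no line containing the whole ray. So one must exploit the special position of $\mathcal{D}_j=k_j a^{-1}.\mathcal{C}^+$ with $k_j\in K_\to$, and this is exactly where the paper's work lies: the Weyl-distance computation $\delta(\mathcal{C}^-,\omega a.\mathcal{C}^+)=\overline{w}$ (using that $\omega$ fixes the relevant corner chamber), the verification that the displaced sector meets the opposite sector only in a single vertex, the gluing of a strong isometry on the union of the two sectors, and the embedding of strongly isometric images of apartment subsets into apartments \cite[\S 15.5]{GarrettBuldings}. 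Your proposal assumes precisely this conclusion as given, so as written the proof does not go through; the mirrored version of the paper's strong-isometry argument is what must be supplied at this point.

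Relatedly, the lemma you flag as the expected obstacle --- that two apartments sharing a sector are related by an element of $G^\bullet$ fixing the sector pointwise --- is actually the routine part: take $g\in G^\bullet$ with $g.(\mathcal{C}^+,\mathcal{A})=(\mathcal{C}^+,\mathcal{A}_j)$ by strong transitivity; then $g.\mathcal{S}^+$ is the sector of $\mathcal{A}_j$ based at $v_0$ with chamber $\mathcal{C}^+$ at its tip, i.e.\ $\mathcal{S}^+$ itself, and type-preservation plus thinness of the sector force pointwise fixing (this is exactly the paper's two-line argument for $\mathcal{S}^-$). Minor further points: $\mathcal{R}\cup a^{-1}.\mathcal{R}$ is not convex, though your characterization of $K_\to^{(0,1)}$ is correct --- it follows from the rhomboid proposition applied to the corners $v_{-1}$ and $v_1$, whose corner chambers $a^{-1}.\mathcal{C}^+$ and $\mathcal{C}^-$ are both fixed; finiteness of the orbit should not invoke compactness of $K_\to$ (no topology is assumed in this section) but follows because the orbit consists of chambers at bounded distance from the fixed vertex $v_0$ and $\Delta$ is locally finite; and ``one may arrange $\omega_j.(a^{-1}.\mathcal{C}^+)=\mathcal{D}_j$'' needs no arranging, since a chamber of the apartment $\mathcal{A}_j$ is determined by its Weyl distance from $\mathcal{C}^+$.
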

\begin{proof}
Denote by $\mathcal{P}\coloneqq \mathcal{C}^+\cup\mathcal{C}^-$ the pair of chambers defining $K_\to$. 
The group $K_\to^{(0,1)}$ stabilizes pointwise $\mathcal{P}$ and $a.\mathcal{P}$. 

Let $\omega K_\to^{(0,1)}$ be a right  coset in $K_\to$ then
\begin{equation*}
\omega K_\to^{(0,1)}=\left\{g\in G\mid g.\mathcal{P}=\mathcal{P} \textrm{ and } ga.\mathcal{P}=\omega a.\mathcal{P} \right\}
\end{equation*}
Recall that $v_1=a.v_0$. The vertex $v_1$ is stabilized by $\omega K_\to^{(0,1)}$, hence the chamber $\omega a.\mathcal{C}^+$ necessarily has $v_1$ as a vertex. Moreover the metric distance between $\omega a.\mathcal{C}^-$ and $v_1$ is the equal to the distance between $a.\mathcal{C}^-$ and $v_1$ because the vertex $v_1$ is fixed by $\omega$. Hence there is a ball of of fixed finite radius around $v_1$ containing \emph{all} the chamber pairs $\omega a.\mathcal{P}$ coming from all the right cosets of $K_\to^{(0,1)}$  in $K_\to$. By the local finiteness of the building, there are only finitely many possibilities for such pairs of chambers and hence only finitely many right cosets of $K_\to^{(0,1)}$ in $K_\to$.

The chamber pair $a.\mathcal{P}$ is contained in the sector $a.\mathcal{S}^+$ and the chamber pair $\mathcal{P}$ is contained in the sector $\mathcal{S}^-$. Thus for any $\omega\in K_\to$ the pair of chambers $\omega a.\mathcal{P}$ is contained in the sector $\omega a.\mathcal{S}^+$. 

Our plan is to find $\omega'\in G^\bullet$ which stabilizes pointwise $\mathcal{S}^{-}$ and sends $a.\mathcal{S}^+$ to $\omega a.\mathcal{S}^+$ and than show that such $\omega'$ must be contained in $\omega K_\to^{(0,1)}\cap K_\to^{(-\infty,0)}$. This $\omega'$ will be the required representative of the coset and this will finish the proof.

Fix $\omega\in K_\to\setminus K_\to^{(0,1)}$ for the rest of the proof
\footnote{For $\omega\in K_\to^{(0,1)}$ there is nothing to prove as we can take the identity as a representative of $\omega K_\to^{(0,1)}$.}
.

Let $W_\mathrm{aff}$ the affine Weyl group of the building $\Delta$. We shall make use of the $W_\mathrm{aff}$ valued distance function on chambers in $\Delta$, see \cite[\S 15.3]{GarrettBuldings}. For every two chamber $\mathcal{C}, \mathcal{D}\in\Delta$ we have $\delta(\mathcal{C}, \mathcal{D})\in W_\mathrm{aff}$. If $\mathcal{A}_0$ is any apartment containing $\mathcal{C}$ and $\mathcal{D}$ we have a canonical action of $W_\mathrm{aff}$ on $\mathcal{A}_0$ and
\footnote{To be consistent with the definitions of \cite[\S 15.3]{GarrettBuldings} we need to let $\delta(\mathcal{C},\mathcal{D})$ act on $\mathcal{A}_0$ on the right after fixing an identification of $\mathcal{A}_0$ with $W_\mathrm{aff}$.}
$\delta(\mathcal{C},\mathcal{D}).\mathcal{D}=\mathcal{C}$. Moreover, for any $g\in G$ and any two chambers $\mathcal{C}, \mathcal{D}$ we have $\delta(g.\mathcal{C}, g.\mathcal{D})=\delta(\mathcal{C},\mathcal{D})$.

Let $\overline{w}\in W_\mathrm{sph}$ be the longest element of the spherical Weyl group.
We immediately see that 
\begin{equation}\label{eq:delta-omega}
\delta(\mathcal{C}^-,\omega a.\mathcal{C}^+)=\delta(\omega.\mathcal{C}^-,\omega a.\mathcal{C}^+)=\delta(\mathcal{C}^-,a.\mathcal{C}^+)=\overline{w}
\end{equation}
We wish to understand $\omega a.\mathcal{S}^+\cap \mathcal{S}^-$. Obviously, the vertex $v_1$ belongs to the intersection, we claim that this all the intersection.
The set $\omega a.\mathcal{S}^+\cap \mathcal{S}^-$ lies in the intersection of the two apartments $\mathcal{A}\cap \omega.\mathcal{A}$. The intersection of the two apartments is a convex subset in $\mathcal{A}$ including $v_1$.   Hence if $\omega a.\mathcal{S}^+\cap \mathcal{S}^-$ is strictly larger then $\{v_1\}$ then by convexity the chambers $\omega a.\mathcal{C}^+$ and $\mathcal{C}^-$ share a common face of dimension greater then $1$, this contradicts the fact $\delta(\mathcal{C}^-,\omega a.\mathcal{C}^+)=\overline{w}$.
We have thus proven $\omega a.\mathcal{S}^+\cap \mathcal{S}^-=\{v_1\}$.

We now construct $\omega'\in G$ such that $\omega'.\mathcal{S}^-=\mathcal{S}^-$ and $\omega'a.\mathcal{S}^+=\omega a.\mathcal{S}^+$. Look at the subset $\mathcal{D}_\omega\coloneqq\mathcal{S}^-\cup \omega a.\mathcal{S}^+$. We claim that this subset is \emph{strongly isometric} to $\mathcal{D}\coloneqq\mathcal{S}^-\cup  a.\mathcal{S}^+\subset\mathcal{A}$. Recall that two subsets of chambers, $\mathcal{E}$ and $\mathcal{F}$, are strongly isometric if there exists a bijection $f\colon\mathcal{E}\to\mathcal{F}$
such that for any two chambers $\mathcal{C},\mathcal{D}\in\mathcal{E}$ we have $\delta(f(\mathcal{C}), f(\mathcal{D}))=\delta(\mathcal{C}, \mathcal{D})$.

We define the strong isometry $f\colon\mathcal{D}\to\mathcal{D}_\omega$ by 
\begin{equation*}
f(\mathcal{C})=
\begin{cases}
\mathcal{C} & \textrm{if }\mathcal{C}\in\mathcal{S}^- \\
\omega.\mathcal{C} & \textrm{if }\mathcal{C}\in a.\mathcal{S}^+ \\
\end{cases}
\end{equation*}
This is well defined because the intersection of $\mathcal{S}^-$ and $\omega a.\mathcal{S}^+$ includes only the vertex $v_1$. It is obvious that $f$ preserves $\delta$ between two chamber which are both contained either in $\mathcal{S}^-$ or in $a.\mathcal{S}^+$. Let now $\mathcal{C}\in\mathcal{S}^-$ and $\mathcal{C}'\in a.\mathcal{S}^+$. We use the multiplicative property of $\delta$ and have
\begin{align*}
\delta(f(\mathcal{C}),f(\mathcal{C}'))&=\delta(\mathcal{C},\omega.\mathcal{C}')
=\delta(\mathcal{C},\mathcal{C}^-)\delta(\mathcal{C}^-,\omega a.\mathcal{C}^+)
\delta(\omega a.\mathcal{C}^+,\omega.\mathcal{C}')\\
&=\delta(\mathcal{C},\mathcal{C}^-)\delta(\mathcal{C}^-,a.\mathcal{C}^+)
\delta(a.\mathcal{C}^+,\mathcal{C}')\\
&=\delta(\mathcal{C},\mathcal{C}')
\end{align*}
One can show using an analogues calculation that $\delta(f(\mathcal{C}'),f(\mathcal{C}))=\delta(\mathcal{C}',\mathcal{C})$.

We have shown that $\mathcal{D}_\omega$ is strongly isometric to a subset of an apartment, hence there exists an apartment $\mathcal{A}_\omega$ such that $\mathcal{D}_\omega\subset\mathcal{A}_\omega$, see \cite[\S 15.5]{GarrettBuldings}.
Notice that $\mathcal{C}^-\subset\mathcal{S}^-\subset\mathcal{A}_\omega$.
We now use the strong transitivity of $G^\bullet$ acting on $\Delta$ to deduce that there is $\omega'\in G^\bullet$ sending the pair $\left(\mathcal{C}^-,\mathcal{A}\right)$ to $\left(\mathcal{C}^-,\mathcal{A}_\omega\right)$. 

This $\omega'$ must send $\mathcal{S}^-$ to the sector emanating from $\omega'(v_1)=v_1$ in the orientation defined by $\omega'(\mathcal{C}^-)=\mathcal{C}^-$ which is exactly $\mathcal{S}^-$. In particular, as sectors are thin and $\omega'$ preserves types we conclude that $\omega'$ fixes $\mathcal{S}^-$ pointwise.

Moreover, we have 
\begin{equation*}
\delta(\mathcal{C}^-,\omega'a.\mathcal{C}^+)
=\delta(\omega'.\mathcal{C}^-,\omega'a.\mathcal{C}^+)
=\delta(\mathcal{C}^-,a.\mathcal{C}^+)=\overline{w}
\end{equation*}
Also $\delta(\mathcal{C}^-,\omega a.\mathcal{C}^+)=\overline{w}$, but both $\omega a.\mathcal{C}^+$ and $\omega' a.\mathcal{C}^+$ belong to the apartment $\mathcal{A}_w$ and have the same $\delta$ distance from $\mathcal{C}^-\in\mathcal{A}_w$ hence $\omega' a.\mathcal{C}^+=\omega a.\mathcal{C}^+$. We see that $\omega'.a\mathcal{S}^+$ is a sector in the same apartment $\mathcal{A}_\omega$ as $\omega.a\mathcal{S}^+$ with the same base point $v_1$ and the same chamber at base, hence $\omega'.a\mathcal{S}^+=\omega.a\mathcal{S}^+$ as required.

We are left only with proving that $\omega'\in\omega K_\to^{(0,1)}\cap K_\to^{(-\infty,0)}$. Because $\omega'$ stabilizes pointwise $\mathcal{S}^-$, to prove that $\omega'\in K_\to^{(-\infty,0)}$ it is enough to show that $a^{-n}.\mathcal{P}\subset \mathcal{S}^{-}$ for all $n\geq0$. We prove this claim by induction. For $n=0$ it follows from the definition of $\mathcal{S}^-$. Assume the claim for $n$: $a^{-n}.\mathcal{P}\subset\mathcal{S}^{-1}$.  By applying $a^{-1}$ we have $a^{-(n+1)}.\mathcal{P}\subset a^{-1}.\mathcal{S}^{-1}$. The sector $a^{-1}.\mathcal{S}^{-1}$ emanates from $v_0\in\mathcal{S}^{-}$ and because $a^{-1}$ acts as a translation on the apartment $\mathcal{A}$ we have that $a^{-1}.\mathcal{S}^{-1}$ has the same orientation in $\mathcal{A}$ as $\mathcal{S}^{-}$. Hence $a^{-1}.\mathcal{S}^{-} \subset \mathcal{S}^-$ and $a^{-(n+1)}.\mathcal{P}\subset \mathcal{S}^{-1}$ as required.

At last we show that $\omega'.\mathcal{P}=\omega.\mathcal{P}$ which would imply $\omega' K_\to^{(0,1)}=\omega K_\to^{(0,1)}$. We already know that $\omega' a.\mathcal{C}^+=\omega a.\mathcal{C}^+$, where the equality must be pointwise as both $\omega'$ and $\omega$ are type-preserving
\footnote{The element $\omega$ belongs to an Iwahori hence preserves types.}
. Now both $\omega'a.\mathcal{C}^-$ and $\omega a.\mathcal{C}^-$ belong the the apartment $\mathcal{A}_\omega$ and $\delta(\omega a.\mathcal{C}^+,\omega a.\mathcal{C}^-)=\delta(a.\mathcal{C}^+, a.\mathcal{C}^-)=\delta(\omega a.\mathcal{C}^+,\omega' a.\mathcal{C}^-)$, so $\omega' a.\mathcal{C}^-=\omega a.\mathcal{C}^-$ with pointwise equality.
\end{proof}
\subsection{Walks on \texorpdfstring{$S$}{S}-Algebraic Quotients}
\label{sec:walks-s-algebraic}
In this section we finally present the main class of systems with property (M) which we are interested in. Let $\mathbf{G}$ be a reductive linear algebraic group defined over $\mathbb{Q}$ and let $S$ be a finite set of places of $\mathbb{Q}$. Set $G_S\coloneqq\prod_{v\in S} \mathbf{G}(\mathbb{Q}_v)$.

We fix some finite place $p\in S$ and let $G_p\coloneqq \mathbf{G}(\mathbb{Q}_p)$ act on the associated affine Bruhat-Tits building $\Delta$. The building $\Delta$ has been constructed for $\mathbf{SL}_n$ by Goldman and Iwahori \cite{GoldmanIwahori}, for split groups by Iwahori and Matsumoto \cite{IwahoriMatsumoto} and generally by Bruhat and Tits \cite{BruhatTits}.

\begin{prop}\label{prop:M-algebraic}
Let $\Gamma<G_S$ be some lattice and define $Y\coloneqq \lfaktor{\Gamma}{G_S}$. Let $a\in G_p$ be a semisimple element. Denote by $A< G_p$ the maximal split subtorus
\footnote{The torus $A$ is not necessarily a maximal torus in $G_p$ as we have not assumed that $\mathbf{G}$ is split over $\mathbb{Q}_p$.} 
of a maximal torus containing $a$.

The element $a^{-1}$ acts on the apartment in $\Delta$ associated to $A$, assume that it sends a fixed special vertex $v_0$ to a different special vertex $v_1$ which does not have a shared wall with $v_0$. This assumption is fulfilled whenever $a$ is $\mathbb{Q}_p$-regular and does not belong to a compact subgroup.

Let $K_\to<G_p$ be the arrow subgroup corresponding to $a^{-1}$ and the vertex $v_0$ in the apartment associated to $A$. Define $X\coloneqq \faktor{Y}{K_\to}$. Then $Y\to X$ has property (M) with respect to the right action by $a$.
\end{prop}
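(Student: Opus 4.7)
The plan is to reduce the statement to Proposition~\ref{prop:arrow-M} applied inside the $p$-adic factor, and then transfer property (M) from $G_p$ to $G_S$ via the remark following Corollary~\ref{cor:M-criterion}.

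First I would unpack the building-theoretic setup. The group $G_p=\mathbf{G}(\mathbb{Q}_p)$ acts on $\Delta$ with its type-preserving subgroup acting strongly transitively, as a standard consequence of the $BN$-pair structure for reductive $p$-adic groups. The maximal $\mathbb{Q}_p$-split torus $A$ acts on its associated apartment $\mathcal{A}$ through the cocharacter lattice, i.e.\ by translations in the extended affine Weyl group. Since $a$ is semisimple and $\mathbb{Q}_p$-regular, its image in the cocharacter lattice is a regular vector, and since $a$ does not lie in a compact subgroup, this image is nonzero. Hence $a^{-1}$ acts on $\mathcal{A}$ as a nontrivial translation whose vector lies in the interior of a Weyl chamber: the endpoint $v_1=a^{-1}.v_0$ does not share any wall with $v_0$, and $v_1$ is again a special vertex because translation by an element of the extended affine Weyl group preserves the set of special vertices.

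With these hypotheses verified, Proposition~\ref{prop:arrow-M} applies to $G_p$ acting on $\Delta$ with arrow subgroup $K_\to$ associated to $a^{-1}$ and $v_0$. It furnishes $\omega_1,\dots,\omega_k\in K_\to^{(-\infty,0)}$, where the compact intersections are formed inside $G_p$, such that
\begin{equation*}
K_\to=\coprod_{j=1}^{k}\omega_j\,K_\to^{(0,1)}.
\end{equation*}
Finally I would transfer this decomposition from $G_p$ to $G_S$. Viewing $K_\to\subset G_p$ as a subgroup of $G_S$ via the canonical inclusion of the $p$-factor, the element $a$ acts by conjugation only on the $p$-component of $G_S$, so the compact subgroups $K_\to^{(s,t)}$ computed inside $G_S$ coincide literally with those computed inside $G_p$. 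By the remark transferring property (M) immediately after Corollary~\ref{cor:M-criterion}, the decomposition above continues to witness the hypothesis of that corollary for $K_\to<G_S$ and $a\in G_S$, which yields property (M) for the dynamical system $Y=\lfaktor{\Gamma}{G_S}$ with right action by $a$ and state space $X=\faktor{Y}{K_\to}$.

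The main obstacle is the regularity verification in the first step: one must identify the $A$-action on $\mathcal{A}$ with the translation part of the extended affine Weyl group and check that $\mathbb{Q}_p$-regularity of $a$ is precisely the condition that the resulting translation vector avoids all walls through $v_0$, while non-compactness guarantees the vector is nonzero. Everything else is a routine application of the preceding machinery, since strong transitivity of $G_p^{\bullet}$ on $\Delta$ and preservation of special vertices under the extended affine Weyl group are standard facts from Bruhat--Tits theory.
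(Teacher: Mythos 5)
Your argument is correct and is essentially the paper's own proof: the paper likewise deduces the claim directly from Proposition \ref{prop:arrow-M} and Corollary \ref{cor:M-criterion}, with the transfer from $G_p$ to $G_S$ handled exactly as you do via the remark following that corollary (conjugation by $a$ only touches the $p$-component, so the subgroups $K_\to^{(s,t)}$ are the same in $G_p$ and $G_S$). The only detail the paper adds is the observation that $K_\to$, being an intersection of two Iwahori subgroups, is compact open in $G_p$, hence compact in $G_S$ with open normalizer, so the standing assumptions on the state space are satisfied.
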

\begin{proof}
The group $K_\to$, being the intersection of two Iwahori subgroup, is a compact open subgroup of $G_p$. In particular, it is a compact subgroup of $G_S$ and $X$ is a Hausdorff quotient space.

The claim is a direct consequence of Corollary \ref{cor:M-criterion} and Proposition \ref{prop:arrow-M}.
\end{proof}

\section{Applications to Equidistribution}
We return to the general setting of \S \ref{sec:standing}. We study the dynamical system $\left(Y, \mathcal{Y}, m, S \right)$, 
$Y\coloneqq\lfaktor{\Gamma}{G}$ where $G$ is a second-countable locally compact topological group, $\Gamma<G$ -- a lattice, $m$ -- the probability Haar measure on $Y$ and the transformation $S$ is the right action by some fixed $a\in G$. The state space is $X\coloneqq \faktor{Y}{K}$ where $K<G$ is a compact subgroup such that $N_G(K)<G$ is an open subgroup.

Moreover we denote the associated averaging operator by $T$ and assume that $\lambda\coloneqq\|T\|_{L^2_0(X)}<1$.

For any $a$-invariant Borel probability measure $\nu$ on $Y$ we denote by $h_\nu(a)$ the measure theoretic entropy of $\nu$ with respect to the right $a$-action. In particular, $h_m(a)$ is the entropy of the Haar measure.
\subsection{Bowen Balls}
\begin{defi}
Fix $a\in G$.
Let $B\subseteq G$ be an identity neighborhood. For $s,t\in\mathbb{Z}$ such that $s\leq t$ we define the $B^{(s,t)}$ Bowen ball to be
\begin{equation*}
B^{(s,t)}\coloneqq \bigcap_{s\leq i\leq t} a^{-i} B a^i
\end{equation*}

We say that $B$ is $(a,h)$-homogeneous for some $h\geq0$ if there exists $C_{B,h}>0$ which depends only on $h$, $B$, $G$ and $a$ such that for any $s\leq0\leq t$
\begin{equation*}
m(B^{(s,t)})\geq C_{B,h}\exp(-h (t-s+1))
\end{equation*}
\end{defi}

\begin{defi}
The Haar measure $m$ is $a$-homogeneous if for all $h>h_m(a)$ there is a base for the topology of $G$ around the identity of $(a,h)$-homogeneous neighborhoods.
\end{defi}

\begin{remark}
In important cases much stronger homogeneity properties for identity neighborhoods hold with respect to the Haar measure.
Specifically, we might have a concrete value for $C_{B,h}$ in terms of $m(B)$ and perhaps even be able to set $h=h_m(a)$.

A stronger homogeneity assumption will produce a stronger effective equidistribution result in Theorem \ref{thm:effective}.

In Proposition \ref{prop:homogeneous} we show that in the $S$-arithmetic setting the Haar measure is $(a,h_m(a))$-homogeneous.
\end{remark}

\begin{defi}
Let $\varphi\colon Y\to[0,1]$ be $\mathcal{X}$-measurable. For any identity neighborhood $B\subset N_G(K)$ we define
\begin{align*}
&\varphi^B(y)=\esssup_{b\in B} \varphi(y b)=\esssup \varphi(y B^{-1})\\
&\varphi_B(y)=\essinf_{b\in B} \varphi(y b)=\esssup \varphi(y B^{-1})
\end{align*}

The assumption $B\subset N_G(K)$ implies that the functions $\varphi^B(y)$ and $\varphi_B(y)$ are $\mathcal{X}$-measurable.
\end{defi}

\begin{remark}
If $G$ is an $S$-algebraic group with a compact part at infinity then one can look at neighborhoods $B\subseteq K_0$ where $K_0$ is a compact-open subgroup of $G$. In this case if $\varphi$ is $K_0$ invariant, i.e.\ $K_0$-smooth, then $\varphi^B=\varphi_B=\varphi$.
\end{remark}

\begin{lem}\label{lem:semicont-bound}
Let $\varphi\colon Y\to[0,1]$ be $\mathcal{X}$-measurable and $B\subset N_G(K)$  an identity neighborhood.
Let $s,t\in \mathbb{Z}$ such that $s\leq t$. For all $y\in Y$ and $z\in y B^{(s,t)}$
\begin{enumerate}[label=(\alph*)]
\item if $\varphi$ is lower-semicontinuous then
\begin{equation*}
\sum_{i=s}^{t} \varphi^{B}(z a^{-i})\geq \sum_{i=s}^{t} \varphi(y a^{-i})
\end{equation*}\item if $\varphi$ is upper-semicontinuous then
\begin{equation*}
\sum_{i=s}^{t} \varphi_{B}(z a^{-i})\leq \sum_{i=s}^{t} \varphi(y a^{-i})
\end{equation*}
\end{enumerate}
\end{lem}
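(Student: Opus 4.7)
First, I would unwind the hypothesis $z \in y B^{(s,t)}$. Under the convention $y F = \{y f^{-1} : f \in F\}$, we may write $z = y b^{-1}$ for some $b \in B^{(s,t)} = \bigcap_{s \leq i \leq t} a^{-i} B a^i$. For each $i$ with $s \leq i \leq t$ I then compute
\[
z a^{-i} = y b^{-1} a^{-i} = (y a^{-i}) \, d_i, \qquad d_i \coloneqq a^i b^{-1} a^{-i},
\]
and observe that $b \in a^{-i} B a^i$ is equivalent to $d_i^{-1} = a^i b a^{-i} \in B$. Rewriting this identity gives $y a^{-i} = (z a^{-i}) d_i^{-1}$ with $d_i^{-1} \in B$, so $y a^{-i}$ is a right translate of $z a^{-i}$ by an element that lies in $B$ itself.

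It therefore suffices to prove the following pointwise claim for part (a): if $\varphi$ is lower semicontinuous, $y' \in Y$, and $b_0 \in B$, then $\varphi(y' b_0) \leq \varphi^B(y')$. Granted this, applying it with $y' = z a^{-i}$ and $b_0 = d_i^{-1}$ and summing over $s \leq i \leq t$ yields (a). To prove the pointwise claim, fix $\varepsilon > 0$; by lower semicontinuity of $b \mapsto \varphi(y' b)$, the set $\{b \in G : \varphi(y' b) > \varphi(y' b_0) - \varepsilon\}$ is open and contains $b_0$. Its intersection with the open identity neighborhood $B$ is an open neighborhood of $b_0$ inside $B$, hence has positive Haar measure. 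Consequently $\esssup_{b \in B} \varphi(y' b) \geq \varphi(y' b_0) - \varepsilon$, and letting $\varepsilon \downarrow 0$ gives the claim.

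For part (b), I would apply part (a) to the auxiliary function $\psi \coloneqq 1 - \varphi$, which is lower semicontinuous whenever $\varphi$ is upper semicontinuous and satisfies $\psi^B = 1 - \varphi_B$. Applying (a) to $\psi$ gives
\[
\sum_{i=s}^{t} \bigl(1 - \varphi_B(z a^{-i})\bigr) \geq \sum_{i=s}^{t} \bigl(1 - \varphi(y a^{-i})\bigr),
\]
which rearranges to the stated inequality in (b). Alternatively, one can repeat the pointwise argument directly using the open sublevel sets $\{b \in G : \varphi(y' b) < \varphi(y' b_0) + \varepsilon\}$ and the $\essinf$.

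The only genuinely delicate step is the bookkeeping in the first paragraph, where the inversion convention must be tracked carefully to ensure that the perturbation $d_i^{-1}$ lies in $B$ (and not $B^{-1}$), so that it actually contributes to the essential supremum defining $\varphi^B$. Everything else reduces to the elementary fact that on an open subset of positive Haar measure, the essential supremum of a lower semicontinuous function majorizes every pointwise value it takes there; no properties of $K$, of the Koopman operator, or of the dynamics beyond the group law are used.
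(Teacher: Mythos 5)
Your argument is correct and follows the same route as the paper's proof: unwind $z\in yB^{(s,t)}$ to write $ya^{-i}$ as a right $B$-translate of $za^{-i}$ for each $s\leq i\leq t$, then use lower (resp.\ upper) semicontinuity to see that the essential supremum (resp.\ infimum) over $B$ dominates the pointwise value, since the relevant superlevel set meets $B$ in a nonempty open set of positive Haar measure. The only cosmetic difference is that you deduce part (b) from part (a) via $1-\varphi$, whereas the paper runs the two symmetric inequalities in parallel.
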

\begin{remark}
If $\varphi$ is the characteristic function of an open set then it is lower-semicontinuous and if it is the characteristic function of a closed set it is upper-semicontinuous.
\end{remark}
\begin{proof}
If $z\in y B^{(s,t)}$ then for all $s\leq i\leq t$ we have $z a^{-i}\in y a^{-i} B$, i.e.\ there exists $b\in B$ such that $z a^{-i} = y a^{-i} b^{-1} \Leftrightarrow z a^{-i} b = y a^{-i} $. This implies
\begin{align*}
&\varphi^B(z a^{-i})=\esssup_{b\in B} \varphi(z a^{-i} b)\geq\phi(y a^{-i})\\
&\varphi_B(z a^{-i})=\essinf_{b\in B} \varphi(z a^{-i} b)\leq\phi(y a^{-i})
\end{align*}
In the inequalities on the right we have used lower-semicontinuity or upper-semicontinuity respectively.
\end{proof}

\begin{lem}\label{lem:disjoint-bowen}
Let $B\subset G$ be an identity neighborhood. If $B_{0}\subset G$ is an identity neighborhood such that ${B_{0}}^{-1}B_{0}\subseteq B$ then for all $s,t\in\mathbb{Z}$ such that $s\leq t$
\begin{equation*}
{B_0^{(s,t)}}^{-1}B_0^{(s,t)}\subseteq B^{(s,t)}
\end{equation*}
\end{lem}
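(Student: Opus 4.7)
The plan is to unpack the definition of $B_0^{(s,t)}$ and $B^{(s,t)}$ and use the conjugation relation directly. Take arbitrary $x, y \in B_0^{(s,t)}$; the goal is to show $x^{-1}y \in a^{-i} B a^{i}$ for every $s \leq i \leq t$.

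For such an $i$, the membership $x, y \in a^{-i} B_0 a^{i}$ is equivalent to $a^{i} x a^{-i}, a^{i} y a^{-i} \in B_0$. Then
\begin{equation*}
a^{i} (x^{-1} y) a^{-i} = (a^{i} x a^{-i})^{-1}(a^{i} y a^{-i}) \in B_0^{-1} B_0 \subseteq B,
\end{equation*}
so $x^{-1} y \in a^{-i} B a^{i}$. Since $i$ was arbitrary in the range, intersecting over $s \leq i \leq t$ gives $x^{-1} y \in B^{(s,t)}$, which is the desired inclusion.

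There is no real obstacle here; the proof is a one-line calculation that uses only that conjugation by $a^{i}$ is a group automorphism and the hypothesis $B_0^{-1} B_0 \subseteq B$. The statement is simply the observation that the operation $B \mapsto B^{(s,t)}$ of intersecting conjugates is compatible with the standard ``symmetrize then shrink'' trick for producing disjointness-type neighborhoods, so that if $B_0$ witnesses $B$ in the usual sense then $B_0^{(s,t)}$ witnesses $B^{(s,t)}$ in the same sense.
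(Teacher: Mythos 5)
Your proof is correct and takes essentially the same route as the paper: the paper carries out the identical computation at the level of sets, writing ${B_0^{(s,t)}}^{-1}B_0^{(s,t)}$ inside $\bigcap_{s\leq i\leq t}\left(a^{-i}B_0a^i\right)^{-1}\left(a^{-i}B_0a^i\right)=\bigcap_{s\leq i\leq t}a^{-i}B_0^{-1}B_0a^i\subseteq B^{(s,t)}$, which is your element-wise argument verbatim. Nothing further is needed.
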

\begin{remark}
If $B$ is a subgroup then $B_0=B$ satisfies the conditions of the lemma.
\end{remark}
\begin{proof}
For any $s\leq i \leq t$
\begin{equation*}
\left(a^{-i} B_0 a^i\right)^{-1}\left(a^{-i} B_0 a^i\right)=a^{-i} {B_0}^{-1}B_0 a^i\subseteq
a^{-i} B a^i
\end{equation*}
hence
\begin{align*}
{B_0^{(s,t)}}^{-1}B_0^{(s,t)}&=\bigcap_{s\leq i \leq t}\left(a^{-i} B_0 a^i\right)^{-1}\cdot
\bigcap_{s\leq j \leq t}\left(a^{-j} B_0 a^j\right)\\
&\subseteq \bigcap_{s\leq i \leq t}\left(a^{-i} B_0 a^i\right)^{-1} \left(a^{-i} B_0 a^i\right)\\
&\subseteq \bigcap_{s\leq i \leq t} \left(a^{-i} B a^i\right)=B^{(s,t)}
\end{align*}
\end{proof}
\subsection{Effective Equidistribution}
The idea to use large deviation estimates to prove equidistribution of measures with good separation properties goes back to Linnik \cite{LinnikBook}, who has already proved an effective version of his result. It has been developed further by Ellenberg, Michel and Venkatesh \cite{PointsOnSphere}. 
The novelty of our result is that it applies to measures whose separation is sub-optimal.

The relevant separation property for the large deviations methods is similar to the separation property used in showing that the limit measure has positive entropy \cite[Proposition 3.3]{ELMVPeriodic}. We require that the average over a large compact set $\Omega$  of the measure of Bowen balls on a \emph{single} scale $n_0$ is exponentially small. 
\begin{equation}\label{eq:bowen-ave}
-\frac{1}{2n_0}\log\int_\Omega \mu\left(yB^{(-n_0,n_0)}\right)\dif\mu(y)\geq h
\end{equation}
This should be contrasted with entropy, which by the Brin-Katok theorem \cite{BrinKatok}, implies that the average over a large set of the logarithm of the measure of Bowen balls is  small for all large enough scales, $\forall n\geq N$. 
\begin{equation*}
-\frac{1}{2n}\int_\Omega \log \mu\left(yB^{(-n,n)}\right)\dif\mu(y)\geq h_m(a)-\varepsilon
\end{equation*}

Obviously, this two properties are related but are not interchangeable. Entropy does not provide a single scale on which the average size of the Bowen balls is small, it rather provides a scale on which the average size of the logarithm of the Bowen ball is small. For a fixed Bowen Ball $B^{(-n_0,n_0)}$ this is a weaker property then \eqref{eq:bowen-ave} because $(-\log)$ is a strictly convex function.

On the other hand, entropy requires information regarding size of Bowen balls on all large enough scales, while the separation property we provide is on a single scale. Specifically, the measure $\mu$ might have zero entropy and our theorem can still provide non-trivial information. This is extremely useful in applications like \cite{LinnikBook} and \cite{PointsOnSphere}.

\begin{thm}\label{thm:effective}
Fix $a\in G$ and assume that $Y\to X$ has property (M) with respect to the right action by $a$. Fix further $B\subset G$ an identity neighborhood and a compact subset $\Omega\subset Y$.
Let $\mu$ be an $a$-invariant Borel probability measure on $Y$. 
Fix two distinct integers $s\leq0\leq t$  and let $n=t-s+1$.
Define
\begin{equation*}
\alpha\coloneqq-\frac{1}{n}\log\int_\Omega \mu\left(yB^{(s,t)}\right)\dif\mu(y)
\end{equation*} 

Fix $h>0$ and let $B_0\subset N_G(K)$ be an identity neighborhood such that
\begin{enumerate}
\item $B_0$ is $(a,h)$-homogeneous,
\item ${B_0}^{-1}B_0\subseteq B$,
\item the projection $\pi\colon G\to Y$ is injective when restricted to $yB_0$ for any\footnote{Such $B_0$ necessarily exists by the continuity and non vanishing of the injectivity radius.} $y\in \Omega$.
\end{enumerate}

For a $\mathcal{X}$-measurable function $\varphi\colon Y\to[0,1]$
\begin{enumerate}[label=(\alph*)]
\item if $\varphi$ is lower-semicontinuous and $1>\mu(\varphi)\geq m(\varphi^{B_0})$ then for any 
\begin{equation*}
\mu(\Omega)>\kappa \geq \frac{1-\mu(\varphi)}{1- m(\varphi^{B_0})};\quad
\lambda^\mathbb{N}\ni\theta\leq\frac{\left[{\mu(\varphi)-(1-\kappa)}\right]/{\kappa}- m(\varphi^{B_0})}{1-m(\varphi^{B_0})}
\end{equation*}
we have
\begin{align*}
&\frac{D\left(\left[{\mu(\varphi)-(1-\kappa)}\right]/{\kappa} \,\|\, \theta+(1-\theta)m(\varphi^{B_0})\right)}{-\log\theta}
\leq\frac{h-\alpha}{-\log\lambda}\\
&+\frac{1}{n}\Big[\log\left(\frac{\log\theta}{\log\lambda}\right) + D\left(\mu(\varphi)\,\|\,m(\varphi^{B_0})\right)\\
&-2\log\frac{\mu(\Omega)-\kappa}{2}
-\log C_{B_0,h}\Big]
\end{align*}
\item if $\varphi$ is upper-semicontinuous and $0<\mu(\varphi)\leq m(\varphi_{B_0})$ then for any 
\begin{equation*}
\mu(\Omega)>\kappa \geq \frac{\mu(\varphi)}{m(\varphi_{B_0})};\quad
\lambda^\mathbb{N}\ni\theta\leq\frac{m(\varphi_{B_0})-\mu(\varphi)/{\kappa}}{m(\varphi_{B_0})}
\end{equation*}
we have
\begin{align*}
&\frac{D\left(\mu(\varphi)/\kappa \,\|\, (1-\theta)m(\varphi^{B_0})\right)}{-\log\theta}
\leq\frac{h-\alpha}{-\log\lambda}\\
&+\frac{1}{n}\Big[\log\left(\frac{\log\theta}{\log\lambda}\right) + D\left(\mu(\varphi)\,\|\,m(\varphi^{B_0})\right)\\
&-2\log\frac{\mu(\Omega)-\kappa}{2}
-\log C_{B_0,h}\Big]
\end{align*}
\end{enumerate}
\end{thm}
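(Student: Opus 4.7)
The plan is to combine the large deviation upper bound of Theorem \ref{thm:LD-specgap} with a separation-based lower bound on $m$-measure of ``atypical'' sets, following the Linnik-Ellenberg-Michel-Venkatesh strategy. I will focus on part (a); part (b) follows by the symmetric argument applied to $1-\varphi$.

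\textbf{Step 1: Extract a $\mu$-good set with large empirical averages.}
Since $\mu$ is $a$-invariant, for every $y$ one has $\int \tfrac{1}{n}\sum_{i=s}^{t}\varphi(y a^{-i})\dif\mu = \mu(\varphi)$. Setting $\eta = [\mu(\varphi)-(1-\kappa)]/\kappa$ and applying the layer-cake/Markov argument $\mu(\varphi)\leq \eta\cdot\mu(\{\text{avg}<\eta\}) + \mu(\{\text{avg}\geq\eta\})$ gives
$\mu(\{\text{avg}\geq\eta\}) \geq (\mu(\varphi)-\eta)/(1-\eta) \geq 1-\kappa$.
Intersecting with $\Omega$ produces a set $F \subseteq \Omega$ with $\mu(F)\geq \mu(\Omega)-\kappa$ on which $\tfrac{1}{n}\sum_{i=s}^{t}\varphi(y a^{-i})\geq \eta$. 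The hypothesis $\kappa\geq (1-\mu(\varphi))/(1-m(\varphi^{B_0}))$ is equivalent to $\eta\geq m(\varphi^{B_0})$, which is the threshold condition needed for Theorem \ref{thm:LD-specgap}.

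\textbf{Step 2: Refine $F$ using the separation hypothesis and greedy packing.}
The hypothesis gives $\int_\Omega \mu(yB^{(s,t)})\dif\mu(y)=e^{-n\alpha}$. By Markov applied to $y\mapsto \mu(yB^{(s,t)})$ on $F$, the set $F' := \{y\in F : \mu(yB^{(s,t)})\leq \delta\}$ with $\delta = 2e^{-n\alpha}/\mu(F)$ satisfies $\mu(F')\geq \mu(F)/2$. Now perform a greedy selection: pick $y_1\in F'$, then delete $y_1 B^{(s,t)}$ (of $\mu$-mass at most $\delta$), pick $y_2$ from the remainder, and iterate. This produces $N\geq \mu(F')/\delta \geq \mu(F)^2 e^{n\alpha}/4$ points $y_1,\ldots,y_N\in F'$. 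By Lemma \ref{lem:disjoint-bowen}, the hypothesis $B_0^{-1}B_0\subseteq B$ implies $(B_0^{(s,t)})^{-1}B_0^{(s,t)}\subseteq B^{(s,t)}$, so the sets $y_j B_0^{(s,t)}$ are pairwise disjoint.

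\textbf{Step 3: Lower-bound the $m$-measure of the large-average set for $\varphi^{B_0}$.}
Because $s\leq 0\leq t$, we have $B_0^{(s,t)}\subseteq B_0$, so the injectivity hypothesis on $\Omega\cdot B_0$ together with left-invariance of Haar measure give $m(y_j B_0^{(s,t)}) = m_G(B_0^{(s,t)})\geq C_{B_0,h}\,e^{-hn}$ by $(a,h)$-homogeneity. By Lemma \ref{lem:semicont-bound}(a), for every $z\in y_j B_0^{(s,t)}$ the inequality $\tfrac{1}{n}\sum_{i=s}^{t}\varphi^{B_0}(z a^{-i})\geq \eta$ holds. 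Letting $E_\eta := \{z\in Y : \tfrac{1}{n}\sum_{i=s}^{t}\varphi^{B_0}(z a^{-i})\geq \eta\}$, disjointness yields
\begin{equation*}
m(E_\eta) \;\geq\; N\cdot C_{B_0,h}\,e^{-hn} \;\geq\; \frac{C_{B_0,h}(\mu(\Omega)-\kappa)^2}{4}\,e^{-n(h-\alpha)}.
\end{equation*}

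\textbf{Step 4: Compare with large deviations.}
Since $\varphi^{B_0}$ is $\mathcal{X}$-measurable, $[0,1]$-valued and $a$-invariance of $m$ lets me apply Theorem \ref{thm:LD-specgap}(a) to $\varphi^{B_0}$ and the (time-shifted) partial sum $\tfrac{1}{n}\sum_{i=s}^{t}\varphi^{B_0}(\cdot\, a^{-i})$, yielding
\begin{equation*}
m(E_\eta) \;\leq\; \frac{\log\theta}{\log\lambda}\,e^{D(\eta\|\,m(\varphi^{B_0}))}\,\exp\!\left[-n(-\log\lambda)\frac{D(\eta\|\,\theta+(1-\theta)m(\varphi^{B_0}))}{-\log\theta}\right].
\end{equation*}
Combining the two bounds, taking logarithms, dividing by $n(-\log\lambda)$ and rearranging gives precisely the inequality in (a), with $\eta = [\mu(\varphi)-(1-\kappa)]/\kappa$.

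\textbf{Main obstacles.}
The two delicate points are (i) verifying that the greedy packing is clean: one must use Lemma \ref{lem:disjoint-bowen} exactly as stated and carefully check that each removal in the greedy step removes at most $\delta$ of $\mu$-mass, and (ii) justifying that Theorem \ref{thm:LD-specgap} applies to the ``reverse time'' sum $\sum_{i=s}^{t}\varphi^{B_0}(\cdot\, a^{-i})$; this is handled by $a$-invariance of $m$ and replacing $S$ by $S^{-1}$ (or equivalently reindexing $i\mapsto -i+s$), which does not affect property (M) or the operator norm $\lambda$. Part (b) is obtained by applying the above to $1-\varphi$, using Lemma \ref{lem:semicont-bound}(b) and Theorem \ref{thm:LD-specgap}(b), with the threshold $\eta = \mu(\varphi)/\kappa$ arising from the symmetric layer-cake estimate $\mu(\varphi)\geq \eta\,\mu(\{\text{avg}>\eta\})$ restricted to $\Omega$.
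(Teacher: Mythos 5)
Your proposal is correct and follows essentially the same route as the paper's own proof: Markov selection inside $\Omega$ of $\mu$-typical points with large empirical averages and small $B$-Bowen balls, a greedy packing made disjoint via Lemma \ref{lem:disjoint-bowen}, the $(a,h)$-homogeneity lower bound on the Haar mass of the disjoint union of $B_0$-Bowen balls, Lemma \ref{lem:semicont-bound} to transfer the averages to $\varphi^{B_0}$ (resp.\ $\varphi_{B_0}$), and Theorem \ref{thm:LD-specgap} for the upper bound; your fixed ``half-mass'' cut $\delta=2e^{-n\alpha}/\mu(F)$ reproduces the paper's optimized choice $\kappa_B=(\mu(\Omega)-\kappa)/2$ and the same constant $(\mu(\Omega)-\kappa)^2/4$. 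One small remark: the time-window issue is settled purely by $m$-invariance, since $\sum_{i=s}^{t}\varphi^{B_0}(y a^{-i})$ is a shifted \emph{forward} window of iterates of the same transformation $S$, so your parenthetical appeal to replacing $S$ by $S^{-1}$ (for which property (M) is not automatic) is unnecessary and should be dropped in favor of the reindexing you also mention.
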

\begin{remark}
In the favorable case that $Y$ is compact one can take $\Omega=Y$ and deduce a simpler statement. Nevertheless, the statement for a general $\Omega$ is useful for us even in compact spaces. This is because often it is easy to find a big compact set all whose points have some good typical behavior with respect to $\mu$ but that is not the case for the whole space $Y$. 
\end{remark}
\begin{proof}
The idea of the proof is to use the bound on the average size of $B$-Bowen balls to generate many mutually disjoint $B_0$-Bowen balls around points which have typical statistics with respect to $\mu$. Then use the fact that $B_0$ is an $a$-homogeneous neighborhood to show a lower bound on the mass of the union of these disjoint $B_0$-Bowen balls with respect to the \emph{Haar} measure $m$. On the other hand, all the points in these $B_0$-Bowen balls behave typically with respect to $\mu$, at least on the scale of $B_0$. This allows us to derive an upper bound on the Haar measure of the union using our large deviation results in terms of $m(\varphi^{B_0})$ or $m(\varphi_{B_0})$.

Let $\kappa_B>0$ be a constant to be optimized later and let $\kappa>0$ be a free variable whose range will be set later as well.
For a function $f\colon Y\to\mathbb{R}$ denote for any $y\in Y$
\begin{equation*}
A^{(s,t)}f\,(y)\coloneqq\frac{1}{t-s+1}\sum_{i=s}^t f(y a^{-i})
\end{equation*}
Because $m$ is $a$ invariant we have $\int A^{(s,t)}f\dif\mu=\int f\dif\mu$ for any integrable $f$.

\paragraph{Points with good statisics and small Bowen balls.}
We want first to find $\mu$-many points whose $(s,t)$ orbit has statistics similar to $\mu$.
Define
\begin{align*}
&Y^\varphi\coloneqq\left\{y\in Y \mid A^{(s,t)}\varphi\,(y)>  \frac{\mu(\varphi)-(1-\kappa)}{\kappa} \right\}\\
&Y_\varphi\coloneqq\left\{y\in Y \mid A^{(s,t)}\varphi\,(y)<  \frac{\mu(\varphi)}{\kappa} \right\}
\end{align*}
Then by Markov's inequality
\begin{equation*}
\mu(Y^\varphi),\mu(Y_\varphi)\geq 1-\kappa
\end{equation*}
Next we define $\Omega_\varphi\coloneqq Y_\varphi\cap \Omega$ and $\Omega^\varphi\coloneqq Y^\varphi\cap \Omega$. Obviously,
\begin{equation*}
\mu(\Omega^\varphi),\mu(\Omega_\varphi)\geq 1-\kappa-\mu(\Omega^{\mathrm{C}})=\mu(\Omega)-\kappa
\end{equation*}
Necessarily, we need $\kappa<\mu(\Omega)$ for this expression to be useful. 

Next we look for $\mu$-many points for which the measure of the $B^{(s,t)}$ Bowen ball around them is close to $\exp(-\alpha n)$. Define
\begin{equation*}
\Omega_B\coloneqq\left\{y\in Y\mid \mu\left(y B^{(s,t)}\right)\leq \exp(-\alpha n)/\kappa_B \right\}
\end{equation*}
Again by Markov's inequality
\begin{equation*}
\mu(\Omega_B)\geq 1-\kappa_B
\end{equation*}

Finally, we have a bound on the $\mu$-measure of the sets with good statistics and small Bowen balls.
\begin{equation*}
\mu(\Omega^\varphi\cap\Omega_B), \mu(\Omega_\varphi\cap\Omega_B)\geq
\mu(\Omega)-\kappa-\kappa_B
\end{equation*}

\paragraph{Disjoint Bowen balls with good statistics.} 
We present an inductive procedure that constructs finite sets of points $\Sigma^\varphi,\Sigma_\varphi$ such that
\begin{enumerate}[label=(\alph*)]
\item $\Sigma^\varphi\subseteq\Omega^\varphi$ and $\Sigma_\varphi\subseteq\Omega_\varphi$, 
\item for any two points $y_1,y_2\in\Sigma^\varphi$ or $y_1,y_2\in\Sigma_\varphi$ we have $y_1 B_0^{(s,t)}\cap y_2 B_0^{(s,t)}=\emptyset$,
\item $|\Sigma^\varphi|,|\Sigma_\varphi|\geq \kappa_B\frac{\mu(\Omega)-\kappa-\kappa_B}{\exp(-\alpha n)}$.
\end{enumerate} 

Let $\Xi_0=\Omega^\varphi\cap\Omega_B$ or $\Xi_0=\Omega_\varphi\cap\Omega_B$. The inductive procedure goes as follows. Notice that $\mu(\Xi_0)\geq \mu(\Omega)-\kappa-\kappa_B$.
\begin{enumerate}
\item \emph{Inductive assumption:} we have chosen $y_1,\ldots,y_k$ in $\Omega^\varphi\cap\Omega_B$ or  $\Omega_\varphi\cap\Omega_B$ respectively such that $y_i \not\in \bigcup_{j=1}^{i-1} y_j B^{(s,t)}$ for any $1\leq i \leq k$. Moreover, there is a set $\Xi_k\subseteq\Omega^\varphi\cap\Omega_B$ or $\Xi_k\subseteq\Omega_\varphi\cap\Omega_B$ respectively such that $\mu(\Xi_k)\geq \mu(\Omega)-\kappa-\kappa_B-k\exp(-\alpha n)/\kappa_B$ and for each $1\leq i\leq k$ we have $y_i B^{(s,t)}\cap\Xi_k=\emptyset$.
\item If $\mu(\Xi_k)=0$ finish the procedure.
\item If $\mu(\Xi_k)>0$ choose $y_{k+1}\in \Xi_k$ and set $\Xi_{k+1}\coloneqq\Xi_k\setminus y_{k+1}B^{(s,t)}$. 
\item Obviously, $y_{k+1} B^{(s,t)}\cap\Xi_{k+1}=\emptyset$. We have $\Xi_{k+1}\subseteq\Xi_k$ hence§ for all $1\leq i\leq k$ also $y_i B^{(s,t)}\cap\Xi_{k+1}=\emptyset$.
\item Notice that $y_{k+1}\in\Xi_k$ and $\Xi_k\cap\bigcup_{i=1}^k y_i B^{(s,t)}=\emptyset$, thus $y_{k+1}\not\in \bigcup_{i=1}^{k} y_i B^{(s,t)}$.
\item $\mu(\Xi_{k+1})\geq\mu(\Xi_k)-\mu\left(y_{k+1} B^{(s,t)}\right)\geq\mu(\Xi_k)-\exp(-\alpha n)/\kappa_B\geq \mu(\Omega)-\kappa-\kappa_B-(k+1)\exp(-\alpha n)/\kappa_B$, where we have used the fact that $y_{k+1}\in\Omega_B$.
\item Items (4), (5) and (6) show that our choice of $\Xi_{k+1}$, $y_{k+1}$ fulfill the inductive assumption.
\end{enumerate}

The procedure terminates when $0=\mu(\Xi_k)\geq \mu(\Omega)-\kappa-\kappa_B-k\exp(-\alpha n)/\kappa_B$, meaning when $k\geq\kappa_B\frac{\mu(\Omega)-\kappa-\kappa_B}{\exp(-\alpha n)}$. Set $\Sigma^\varphi=\left\{y_1,\ldots,y_k\right\}$ and $\Sigma_\varphi=\left\{y_1,\ldots,y_k\right\}$ for $\Xi_0=\Omega^\varphi\cap\Omega_B$ and $\Xi_0=\Omega_\varphi\cap\Omega_B$ appropriately.

For each $1\leq j< i \leq k$ we have $y_j\not\in y_i B^{(s,t)}$. Recall Lemma \ref{lem:disjoint-bowen} 
\begin{equation*}
{B_0^{(s,t)}}^{-1} B_0^{(s,t)}\subseteq B^{(s,t)}
\end{equation*}
This implies for $y_i,y_j$ as above that
$y_i B_0^{(s,t)}\cap y_j B_0^{(s,t)}=\emptyset$. We conclude that the sets $\Sigma^\varphi$ and $\Sigma_\varphi$ have all the properties (a), (b) and (c) from above.

\paragraph{The $m$-measure of the union of Bowen balls.}
Set
\begin{align*}
&\mathcal{D}^\varphi=\coprod_{y\in\Sigma^\varphi} y B_0^{(s,t)}\\
&\mathcal{D}_\varphi=\coprod_{y\in\Sigma_\varphi} y B_0^{(s,t)}
\end{align*}
Both unions are disjoint ones because of property (b) above. Because $B_0$ is an $(a,h)$-homogeneous neighborhood we have a lower bound on the $m$-measure
\begin{equation*}
m(\mathcal{D}^\varphi),m(\mathcal{D}_\varphi)
\geq \kappa_B\frac{\mu(\Omega)-\kappa-\kappa_B}{\exp(-\alpha n)} 
C_{B_0,h} \exp(-n h)
\end{equation*}
At this point we can already optimize $\kappa_B$ to make this measure as large as possible by setting $\kappa_B=\frac{\mu(\Omega)-\kappa}{2}$. We then have
\begin{equation}\label{eq:D-m-bound}
m(\mathcal{D}^\varphi),m(\mathcal{D}_\varphi)
\geq \frac{\left(\mu(\Omega)-\kappa\right)^2}{4} 
C_{B_0,h}\exp\left(-n (h-\alpha)\right)
\end{equation}

Because $\mathcal{D}^\varphi$ and $\mathcal{D}_\varphi$ are union of $B_0$-Bowen balls of points in $Y^\varphi$ and $Y_\varphi$ respectively, we have by Lemma \ref{lem:semicont-bound} that if $\varphi$ is semicontinuous in the appropriate way then 
\begin{align*}
&\forall y\in D^\varphi\colon A^{(s,t)}\varphi^{B_0}\,(y)\geq  \frac{\mu(\varphi)-(1-\kappa)}{\kappa}\\
&\forall y\in D_\varphi\colon A^{(s,t)}\varphi_{B_0}\,(y)\leq  \frac{\mu(\varphi)}{\kappa}
\end{align*}

We now apply Theorem \ref{thm:LD-specgap} to $\varphi^{B_0}$ and $\varphi_{B_0}$ respectively.
\begin{enumerate}[label=(\roman*)]
\item If $\varphi$ is lower-semicontinuous and
\begin{equation*}
\kappa \geq \frac{1-\mu(\varphi)}{1- m(\varphi^{B_0})};\quad
\theta\leq\frac{\left[{\mu(\varphi)-(1-\kappa)}\right]/{\kappa}- m(\varphi^{B_0})}{1-m(\varphi^{B_0})}
\end{equation*}
then
\begin{align*}
m\left(\mathcal{D}^\varphi\right)
&\leq \frac{\log\theta}{\log\lambda}\exp{D\left(\left[{\mu(\varphi)-(1-\kappa)}\right]/{\kappa} \|\, m(\varphi^{B_0})\right)}\\
&\times\exp\left[-n(-\log\lambda)\frac{D\left(\left[{\mu(\varphi)-(1-\kappa)}\right]/{\kappa} \|\, \theta+(1-\theta)m(\varphi^{B_0})\right)}{-\log\theta}\right]\\
\end{align*}
\item If $\varphi$ is upper-semicontinuous
and
\begin{equation*}
\kappa \geq \frac{\mu(\varphi)}{m(\varphi^{B_0})};\quad
\theta\leq\frac{m(\varphi^{B_0})-\mu(\varphi)/{\kappa}}{1-m(\varphi^{B_0})}
\end{equation*}
then
\begin{align*}
m(\mathcal{D}_\varphi)
&\leq \frac{\log\theta}{\log\lambda}\exp{D\left(\mu(\varphi)/\kappa \|\, m(\varphi_{B_0})\right)}\\
&\times\exp\left[-n(-\log\lambda)\frac{D\left(\mu(\varphi)/\kappa \|\, (1-\theta)m(\varphi_{B_0})\right)}{-\log\theta}\right]\\
&\leq\frac{\log\theta}{\log\lambda} \exp{D\left(\mu(\varphi)\|\, m(\varphi_{B_0})\right)}\\
&\times\exp\left[-n(-\log\lambda)\frac{D\left(\mu(\varphi)/\kappa \|\, (1-\theta)m(\varphi_{B_0})\right)}{-\log\theta}\right]
\end{align*}
\end{enumerate}
Inequalities (i) and (ii) combined with \eqref{eq:D-m-bound} imply the theorem.
\end{proof}

\section{Effective Rigidity of the Measure of Maximal Entropy}
In this section we discuss how an $L^2_0$-norm bound for the averaging operator implies that the push forward of a measure with very large entropy to a state space is close in weak-$*$ sense to the Haar measure. 

We deduce this result from a proposition about limits of sequences of measures with small Bowen balls on finer and finer scales. 

Note that our results \emph{do not} imply directly the uniqueness of the measure of maximal entropy. They just imply that all the measures of maximal entropy have a unique pushforward to the state space.

Although our methods are strong enough to show non-escape of mass for sequences of measure in some favorable situations
we do not discuss this in the current section. See \S\ref{sec:non-escape} for results about non-escape of mass.

\paragraph{Standing assumptions}
We recall our standing assumptions and expand them. Suppose $G$ is a second-countable locally compact topological group with a lattice $\Gamma<G$. Set $Y\coloneqq\faktor{\Gamma}{G}$. Let $m$ be the unique probability Haar measure on $Y$, fix $a\in G$ and consider the dynamical system defined by the right $a$ action on $Y$.

Let $K<G$ be a compact subgroup such that $N_G(K)<G$ is open and define the state space $X\coloneqq \faktor{Y}{K}$. Assume a norm gap for the averaging operator $\lambda\coloneqq \|T\|_{L^2_0(X)}<1$.

We assume \emph{in addition} that $Y\to X$ has property (M) and that the Haar measure is $a$-homogeneous, see Proposition \ref{prop:homogeneous}.

\begin{prop}[Limits of measures invariant under a fixed element]\label{prop:limit-tight}
Fix $a\in G$ and let $\{\mu_i\}_{i=1}^\infty$ be a sequence of $a$-invariant probability measures on $Y$ such that $\mu_i$ converges to a probability measure $\mu$ in the weak-$*$ topology. Let $h>0$ be a fixed real number.

Suppose that we have a family $\mathcal{F}$ of compact subset of $Y$ such that for any $\varepsilon>0$ there exists $\Omega\in\mathcal{F}$ such that
$\limsup_{i\to\infty} \mu_i(\Omega)\geq 1-\varepsilon$.
 
Assume further that for any compact subset $\Omega\in\mathcal{F}$ there is an identity neighborhood $B\subset G$ and a sequence of positive integers $t_i\to_{i\to\infty}\infty$ such that for all $i\in\mathbb{N}$
\begin{equation}\label{eq:omega-bowen-bound}
\int_\Omega \mu_i\left(yB^{(-t_i,t_i)}\right)\dif\mu_i(y)\leq C_\Omega e^{-2h t_i}
\end{equation}
for some $C_\Omega>0$.

Let $\varphi\colon Y\to[0,1]$ be a $\mathcal{X}$-measurable function, if $\varphi$ is continuous or it is a characteristic function of either a closed or an open subset of $X$ then the following holds.
\begin{enumerate}[label=(\alph*)]
\item If $1>\mu(\varphi)> m(\varphi)$ then for any
$\lambda^\mathbb{N}\ni\theta<\frac{\mu(\varphi)-m(\varphi)}{1-m(\varphi)}$
we have
\begin{equation*}
\frac{D\left(\mu(\varphi) \,\|\, \theta+(1-\theta)m(\varphi)\right)}{-\log\theta}
\leq\frac{h_m(a)-h}{-\log\lambda}
\end{equation*}
\item If $0<\mu(\varphi)< m(\varphi)$ then for any 
$\lambda^\mathbb{N}\ni\theta<\frac{m(\varphi)-\mu(\varphi)}{m(\varphi)}$
we have
\begin{equation*}
\frac{D\left(\mu(\varphi) \,\|\, (1-\theta)m(\varphi)\right)}{-\log\theta}
\leq\frac{h_m(a)-h}{-\log\lambda}
\end{equation*}
\end{enumerate}
\end{prop}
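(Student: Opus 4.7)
The strategy is to apply Theorem~\ref{thm:effective} to each $\mu_i$ individually on an arithmetic progression of length $n_i=2t_i+1$ and then extract the desired inequality for $\mu$ by taking four successive limits: $i\to\infty$, $\kappa\to 1^-$, $B_0\to\{e\}$, and the homogeneity exponent $h'\to h_m(a)^+$. Fix $\varepsilon>0$, pick $\Omega=\Omega_\varepsilon\in\mathcal{F}$ with $\limsup_i\mu_i(\Omega)\geq 1-\varepsilon$, and pass to a subsequence so that $\mu_i(\Omega)\geq 1-2\varepsilon$; let $B$ and $t_i\to\infty$ come from \eqref{eq:omega-bowen-bound}. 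Fix $h'>h_m(a)$; using $a$-homogeneity of $m$, openness of $N_G(K)$, and positivity of the injectivity radius on the compact set $\Omega$, choose an identity neighborhood $B_0\subset N_G(K)$ which is $(a,h')$-homogeneous, satisfies $B_0^{-1}B_0\subseteq B$, and for which $G\to Y$ is injective on $yB_0$ for every $y\in\Omega$.

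Setting $s=-t_i$, $t=t_i$, $n_i=2t_i+1$, the hypothesis \eqref{eq:omega-bowen-bound} gives
\[
\alpha_i \;:=\; -\tfrac{1}{n_i}\log\!\int_\Omega\mu_i\!\left(yB^{(-t_i,t_i)}\right)\dif\mu_i(y) \;\geq\; \tfrac{2ht_i-\log C_\Omega}{n_i},
\]
so that $\liminf_i\alpha_i\geq h$. Fix $\theta\in\lambda^{\mathbb{N}}$ strictly smaller than the bound in part~(a) of the proposition, and fix $\kappa<1-2\varepsilon$. For $B_0$ sufficiently small the admissibility conditions on $\kappa$ and $\theta$ in Theorem~\ref{thm:effective}(a) hold for all large $i$. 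Applying Theorem~\ref{thm:effective}(a) to $\mu_i$, the bracketed error is uniformly bounded in $i$ while $n_i\to\infty$, so the $\frac{1}{n_i}[\cdots]$ contribution vanishes. Because $\varphi$ is lower-semicontinuous, the portmanteau theorem gives $\liminf_i\mu_i(\varphi)\geq\mu(\varphi)$, and by continuity and monotonicity of $D(\cdot\|b)$ on $[b,1]$, taking $\liminf$ in $i$ on both sides of the Theorem~\ref{thm:effective}(a) inequality yields
\[
\frac{D\!\left(\tfrac{\mu(\varphi)-(1-\kappa)}{\kappa}\,\big\|\,\theta+(1-\theta)m(\varphi^{B_0})\right)}{-\log\theta}\;\leq\;\frac{h'-h}{-\log\lambda}.
\]

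Now peel off the remaining three limits. Letting $\kappa\nearrow(1-2\varepsilon)$ and then $\varepsilon\to 0$ (which yields a family $\Omega_\varepsilon$ with $\mu(\Omega_\varepsilon)\to 1$, allowing $\kappa\to 1^-$) sends the first slot of $D$ to $\mu(\varphi)$. Shrinking $B_0$ to $\{e\}$ sends $m(\varphi^{B_0})\downarrow m(\varphi)$: this is immediate for continuous $\varphi$, and for $\varphi=\chi_U$ with $U$ open it follows from the identity $\bigcap_{B_0\ni e}UB_0^{-1}=\overline{U}$ together with regularity of $m$. Finally let $h'\to h_m(a)^+$ and invoke continuity of $D$ in both arguments to obtain the inequality of part~(a). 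Part~(b) is proved identically, using Theorem~\ref{thm:effective}(b), upper-semicontinuity of $\varphi$, and $\limsup_i\mu_i(\varphi)\leq\mu(\varphi)$.

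The main obstacle is the orderly bookkeeping to ensure that all admissibility constraints in Theorem~\ref{thm:effective} (on $\kappa$, $\theta$, and $B_0$) are simultaneously satisfied at every stage of the limiting process, and in particular keeping the range of $\theta$ and $\kappa$ non-empty as $B_0\to\{e\}$. The verification $m(\varphi^{B_0})\to m(\varphi)$ for characteristic functions of open (or closed) sets whose boundary might a priori carry positive $m$-mass also requires some care, handled via the regularity of the Haar measure.
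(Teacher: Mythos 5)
Your core strategy---applying Theorem \ref{thm:effective} to each $\mu_i$ with $s=-t_i$, $t=t_i$, letting $i\to\infty$ so that the $\frac{1}{n_i}[\cdots]$ error term disappears, and then peeling off the limits $\kappa\to 1$, $B_0\to\{e\}$, $h'\to h_m(a)$---is the same as the paper's, and for continuous $\varphi$ your version goes through (your use of the portmanteau inequality in place of the paper's monotone approximation by compactly supported functions is legitimate precisely because the limit $\mu$ is assumed to be a probability measure, so there is no escape of mass). The genuine gap is in the indicator cases. First, coverage: part (a) of the proposition also allows $\varphi=\chi_F$ with $F$ closed, and part (b) allows $\varphi=\chi_U$ with $U$ open. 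Your scheme ties part (a) to lower semicontinuity (needed both for Theorem \ref{thm:effective}(a) and for the direction $\liminf_i\mu_i(\varphi)\geq\mu(\varphi)$) and part (b) to upper semicontinuity, so these two cross cases are simply not treated, and they cannot be treated by your direct method: $\chi_F$ is not lower semicontinuous, and for a closed set portmanteau gives only $\limsup_i\mu_i(F)\leq\mu(F)$, the wrong direction for (a).

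Second, even in the case you do treat, the step $m(\varphi^{B_0})\downarrow m(\varphi)$ for $\varphi=\chi_U$ is false in general: by your own identity $\bigcap_{B_0\ni e}UB_0^{-1}=\overline{U}$, the limit is $m(\overline{U})=m(U)+m(\partial U)$, and regularity of $m$ does nothing to remove $m(\partial U)$, which can be positive for a general open set. As written, your argument only yields the inequality with $m(\overline{U})$ in place of $m(U)$, which is strictly weaker, and the admissible range of $\theta$ may even be empty if $\mu(U)\leq m(\overline{U})$. The paper avoids both problems with one extra step you are missing: prove the proposition first for continuous $\varphi$ (compactly supported, then general), note that the limiting inequality depends on $\varphi$ only through the two numbers $\mu(\varphi)$ and $m(\varphi)$, and then approximate $\chi_U$ or $\chi_F$ pointwise by continuous functions $\varphi_k\colon X\to[0,1]$, passing to the limit in the already-proved inequality using dominated convergence (for both $\mu$ and $m$) and continuity of the divergence $D$ in both arguments. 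Inserting that approximation step, and dropping the direct treatment of indicators inside Theorem \ref{thm:effective}, repairs the proof.
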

\begin{remark}
This result generalizes Linnik's approach to equidistribution using large deviation \cite{LinnikBook}. It should be compared with the method of \cite{PointsOnSphere} which served as a base for our work. The results of \cite{PointsOnSphere} essentially use a variation of the proposition above for finite state spaces, $h=h_m(a)$ and $\theta\to0$. 

A similar result for $h=h_m(a)$ and $Y\coloneqq\lfaktor{\mathbf{PGL}_2(\mathbb{R})}{\mathbf{PGL}_2(\mathbb{R})}$ is proven in \cite[Theorem 4.2]{ELMVPGL2}. This last result is different from ours in a few significant ways. Instead of using a norm gap it uses an effective description of the uniqueness of measure of maximal entropy; it is stated for an action by an element $a$ in the real place, which our results do not cover; and it applies for the whole space $Y$ and not just to a state space. Moreover, \cite[Theorem 4.2]{ELMVPGL2} allows to control escape of mass, which our methods can also accomplish.

The downside of \cite[Theorem 4.2]{ELMVPGL2} is that it applies only to $h=h_m(a)$. Our result is effective, it provides useful information in a limited range of $h<h_m(a)$. Moreover, one can give a rate of convergence in our result -- that is essentially Theorem \ref{thm:effective}.
\end{remark}

\begin{remark}
We can take $\mathcal{F}$ to be the collection of \emph{all} compact subsets of $Y$, this would require a bound on the average measure of Bowen balls in any compact subset. 

We show that for any $\varepsilon>0$ there exists a compact subset $\Omega\subset Y$ such that 
$\limsup_{i\to\infty} \mu_i(\Omega)\geq 1-\varepsilon$. This follows by a standard argument. Specifically, take a sequence of closed balls, $\{C_{r_k}\}_{k=1}^\infty$, around a fixed point in $Y$ with a strongly monotonic sequence of radii $\{r_k\}_{k=1}^\infty$ converging to infinity.  We then have that all the set $\partial C_{r_k}$ are mutually disjoint, hence $\sum_{k=1}^\infty \mu(\partial C_{r_k})=\mu(\bigcup_{k=1}^\infty \partial C_{r_k})<\infty$. This implies that $\mu(\partial C_{r_k})\to_{k\to\infty}=0$, on the other hand $\mu( C_{r_k})\to_{k\to\infty}=1$, hence $\mu(C_{r_k}^\circ)=\mu(C_{r_k})- \mu(\partial C_{r_k})\to_{k\to\infty}=1$. Taking $k$ large enough we get the required compact set $\Omega=C_{r_k}$. We actually have a stronger inequality
\begin{equation*}
\liminf_{i\to\infty} \mu_i(\Omega) \geq \liminf_{i\to\infty} \mu_i(\Omega^\circ) \geq \mu(\Omega^\circ)\geq1-\varepsilon
\end{equation*}

If we take $\mathcal{F}$ to be the collection of all compact sets then our result bears similarities to \cite[Proposition 3.3]{ELMVPeriodic} which under mildly stronger assumption shows an entropy lower bound $h_\mu(a)\geq h$. In spite of the similarities, it seems one cannot simply deduce the proposition above and \cite[Proposition 3.3]{ELMVPeriodic} from each other. In particular, in the $S$-algebraic setting our proposition gives strong information for $h$ close to $h_m(a)$ and no information at all for small values of $h$. On the other hand, \cite[Proposition 3.3]{ELMVPeriodic} gives non-trivial information in the whole range $0<h\leq h_m(a)$, yet this information is weaker then ours. If the measure $\mu$ is invariant under additional elements of $G$ commuting with $a$, then having positive entropy alone can imply very strong results regarding the measure using the measure rigidity results of \cite{QuantumLindenstrauss, EKL, EntropySArithmetic}.
\end{remark}
\begin{proof}
We prove the theorem for the case $\mu(\varphi)>m(\varphi)$, the proof of the second case is analogues. 

Assume first that $\varphi$ is continuous and compactly supported.

For any $\epsilon>0$ let $\Omega\in \mathcal{F}$ be a compact set such that $\liminf_{i\to\infty}\mu(\Omega)>1-\varepsilon$. Let $B$ and $\{t_i\}_{i=1}^\infty$ the identity neighborhood and the integer sequence corresponding to $\Omega$ in the hypothesis of the proposition.
Set in accordance with Theorem \ref{thm:effective}
\begin{equation}\label{eq:alpha-eta-ineq1}
\alpha\coloneqq-\frac{1}{2 t_i}\log\int_{\Omega} \mu_i\left(yB^{(-t_i,t_i)}\right)\dif\mu_i(y) \geq h -\frac{1}{2t_i}\log(C_\Omega) 
\end{equation} 

Fix $h_0>h_m(a)$.
We use the standing assumption that the Haar measure is $a$-homogeneous to generate a descending sequence of $(a,h_0)$-homogeneous identity neighborhood $B_0^j\subset N_G(K)$ satisfying the conditions of Theorem \ref{thm:effective} and such that $\bigcap_{j=1}^\infty B_0^j=\{e\}$.

As $\varphi$ is continuous and compactly supported it is actually uniformly continuous, hence $\varphi^{B_0^j}\to_{j\to\infty}\varphi$ pointwise. By the monotone convergence theorem $\lim_{j\to\infty} m(\varphi^{B_0^j})=m(\varphi)$. 

Recall that $\mu_i(\varphi)\to_{i\to\infty}\mu(\varphi)\in[0,1]$. We are dealing with the case $\mu(\varphi)>m(\varphi)$, hence for all $i$ and $j$ large enough one has $\mu_i(\varphi)>m(\varphi^{B_0^j})$.

By Theorem \ref{thm:effective} for any $\kappa$ and $\theta\in\lambda^\mathbb{N}$ satisfying the following
\begin{equation*}
\mu_i(\Omega)>\kappa \geq \frac{1-\mu_i(\varphi)}{1- m(\varphi^{B_0^j})};\quad
\theta\leq\frac{\left[{\mu_i(\varphi)-(1-\kappa)}\right]/{\kappa}- m(\varphi^{B_0^j})}{1-m(\varphi^{B_0^j})}
\end{equation*}
and for all $i$ and $j$ large enough we have
\begin{align*}
&\frac{D\left(\left[{\mu_i(\varphi)-(1-\kappa)}\right]/{\kappa} \,\|\, \theta+(1-\theta)m(\varphi^{B_0^j})\right)}{-\log\theta}
\leq\frac{h_0-h}{-\log\lambda}+\frac{1}{2t_i}\log(C_\Omega) \\
&+\frac{1}{2t_i}\Big[\log\left(\frac{\log\theta}{\log\lambda}\right) D\left(\mu_i(\varphi)\,\|\,m(\varphi^{B_0^j})\right)\\
&-2\log\frac{\mu_i(\Omega)-\kappa}{2}
-\log C_{B_0^j,h_0})\Big]
\end{align*}
where we have also applied inequality \eqref{eq:alpha-eta-ineq1}.

Recall that
$\limsup_{i\to\infty} \mu_i(\Omega) \geq 1-\varepsilon$.
Now let $i\to\infty$ along a subsequence attaining the $\limsup$ in the inequality for $\Omega$, then for any $j$ large enough and any $\kappa$ and $\theta\in\lambda^\mathbb{N}$ satisfying
\begin{equation*}
1-\varepsilon>\kappa > \frac{1-\mu(\varphi)}{1- m(\varphi^{B_0^j})};\quad
\theta <\frac{\left[{\mu(\varphi)-(1-\kappa)}\right]/{\kappa}- m(\varphi^{B_0^j})}{1-m(\varphi^{B_0^j})}
\end{equation*}
we have
\begin{equation*}
\frac{D\left(\left[{\mu(\varphi)-(1-\kappa)}\right]/{\kappa} \,\|\, \theta+(1-\theta)m(\varphi^{B_0^j})\right)}{-\log\theta}
\leq\frac{h_0-h}{-\log\lambda}
\end{equation*}

The claim for $\varphi$ continuous and compactly supported follows by first taking the limits $j\to\infty$, $\varepsilon\to0$, $\kappa\to 1$,  and at last $h_0\to h_m(a)$. 

For $\varphi$ continuous not necessarily compactly supported one can take a monotonic sequence of compactly supported functions $0\leq\varphi_k\leq\varphi$ which converge pointwise to $\varphi$. Then the claim for $\varphi$ follows from the claim for the functions $\varphi_k$ by taking the limit $k\to\infty$ and using the monotone convergence theorem. 

If $\varphi$ is the characteristic of an open or a closed subset of $X$ then similarly it can be approximated by a sequence of continuous functions $\varphi_k\colon X\to[0,1]$ converging pointwise to $\varphi$. By Lebesgue's dominated convergence theorem for every measure $\nu$ on $Y$ one has $\lim_{k\to\infty} \int_Y \varphi_k \dif\nu\to_{k\to\infty} \int_Y \varphi \dif\nu$. The result now follows from the claim for continuous functions by taking the limit  $k\to\infty$.
\end{proof}

To pass from average mass of Bowen balls on a prescribed scale to a result regarding the classical notion of entropy we require the following standard lemma.
\begin{lem}\label{lem:brin-katok-ergodic}
Let $\mu$ be an $a$-invariant \emph{ergodic} measure on $Y$.
Then for every $\varepsilon>0$ there exists a compact subset $\Omega\subseteq Y$, with $\mu(\Omega)>1-\varepsilon$, and an identity neighborhood $B\subseteq G$ such that for all $t>N_\Omega\in\mathbb{N}$ and all $y\in\Omega$
\begin{equation*}
\mu(yB^{(-t,t)}) \leq  e^{-2t(h_\mu(a)-\varepsilon)}
\end{equation*}
\end{lem}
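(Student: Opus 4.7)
The plan is to deduce the lemma from the two-sided Brin-Katok local entropy formula (cited earlier in the paper) combined with Egorov's theorem and inner regularity. These are standard ingredients; the argument is essentially textbook.

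First, I would invoke Brin-Katok to find, for the prescribed small $\varepsilon>0$, an identity neighborhood $B\subseteq G$ such that for $\mu$-almost every $y\in Y$,
\[
\liminf_{t\to\infty}-\frac{1}{2t+1}\log\mu\bigl(yB^{(-t,t)}\bigr)\;\geq\; h_\mu(a)-\tfrac{\varepsilon}{2}.
\]
The original Brin-Katok theorem is stated for continuous maps on compact metric spaces, but the proof uses only the existence of a countable identity-neighborhood basis of $G$ together with finiteness of $\mu$; ergodicity enters in order to turn the Shannon-McMillan-Breiman theorem (applied to a countable generating partition of small diameter) into a pointwise bound, and the $\liminf$ direction is the easier half in which one covers the Bowen ball by not too many atoms of a refined partition. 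Thus the statement extends to our (not-necessarily compact) $S$-arithmetic setting with no essential changes.

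Next, I define the measurable function
\[
N(y)\coloneqq\min\Bigl\{n\in\mathbb{N}\mid\mu\bigl(yB^{(-t,t)}\bigr)\leq e^{-2t(h_\mu(a)-\varepsilon)}\text{ for all }t>n\Bigr\},
\]
with the convention $N(y)=\infty$ if no such $n$ exists. Measurability of $y\mapsto \mu(yB^{(-t,t)})$ for each fixed $t$ is a routine Fubini computation, hence $N$ is measurable. By the displayed $\liminf$ bound, for $\mu$-a.e.\ $y$ one has $-\log\mu(yB^{(-t,t)})\geq (2t+1)(h_\mu(a)-\tfrac{2\varepsilon}{3})>2t(h_\mu(a)-\varepsilon)$ for all sufficiently large $t$, so $N(y)<\infty$ for $\mu$-a.e.\ $y$.

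Finally, since $N<\infty$ on a set of full $\mu$-measure, I choose $N_0$ large enough that $\mu(\{N\leq N_0\})>1-\varepsilon/2$ and apply inner regularity of $\mu$ (valid because $Y$ is locally compact second-countable) to extract a compact subset $\Omega\subseteq\{N\leq N_0\}$ with $\mu(\Omega)>1-\varepsilon$. Setting $N_\Omega\coloneqq N_0$, the desired inequality holds for every $y\in\Omega$ and every $t>N_\Omega$. The only step that is anything more than bookkeeping is the first; once one has the Brin-Katok estimate in hand, the rest of the proof is a standard Egorov-type extraction.
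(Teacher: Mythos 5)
Your proposal is correct and follows essentially the same route as the paper: the paper likewise deduces the lemma from its extension of Brin--Katok to second countable spaces (Theorem \ref{thm:bk-second-countable}) followed by exactly this Egorov-type extraction and inner regularity, only adding the explicit translation between metric Bowen balls $\widetilde{B}^{(-t,t)}_r(y)$ and group-theoretic balls $yB^{(-t,t)}$ via the identity $\widetilde{B}_r(y)=yB_r$. The one point to phrase carefully is your opening claim that a single fixed $B$ works for $\mu$-almost every $y$: since the Brin--Katok statement has the limit over shrinking radii outside the liminf, you should either absorb the exceptional set (whose measure tends to $0$ as the radius shrinks) into the $\varepsilon/2$ budget, or observe that by $a$-invariance of the liminf and ergodicity it is a.e.\ constant for each fixed radius --- both fixes are immediate and do not change your argument.
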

\begin{proof}
This follows from an extension of the Brin-Katok theorem \cite{BrinKatok} to a second countable metric space, Theorem \ref{thm:bk-second-countable}.

Fix a left invariant metric $d$ on $G$ and let $\widetilde{d}$ be the induced metric on $Y$. Denote by $\widetilde{B}_r(y)$ be the open $\widetilde{d}$-ball of radius $r>0$ centered at $y\in Y$. Let $B_r\subset G$ be the ball of radius $r>0$ around the identity. We claim that for all $y=\Gamma g\in Y$ and $r>0$ we have $\widetilde{B}_r(y)=y B_r$. To see that we unwind the definition of $\widetilde{d}$
\begin{align*}
\widetilde{d}(\Gamma g', \Gamma g) < r &\Leftrightarrow \left(\exists \gamma\in\Gamma\right)\, d(\gamma g',  g) <r 
\Leftrightarrow  \left(\exists \gamma\in\Gamma\right)\, d(g^{-1}\gamma g',  e) <r\\
&\Leftrightarrow  \left(\exists \gamma\in\Gamma\right)\,{g}^{-1}\gamma g\in B_r
\Leftrightarrow  \left(\exists \gamma\in\Gamma\right) \gamma g'\in g B_r\\
&\Leftrightarrow \Gamma g'\in \Gamma g B_r
\end{align*}

For any $s\leq 0 \leq t$  define 
\begin{equation*}
\widetilde{B}^{(s,t)}_r(y)\coloneqq\left\{y'\in Y \mid \forall s \leq n \leq t\colon \widetilde{d}(y'a^{-n},y a^{-n})<r\right\}
\end{equation*}

By Theorem \ref{thm:bk-second-countable} for almost all $y\in Y$
\begin{equation*}
\lim_{r\to 0} \liminf_{t\to\infty} \frac{-\log \mu(\widetilde{B}^{(-t,t)}_r(y))}{2t}\geq h_\mu(a)
\end{equation*}
This implies by a standard argument that for every $\varepsilon>0$ there exists a compact subset $\Omega\subseteq Y$ with $\mu(\Omega)>1-\varepsilon$, a radius $r_\varepsilon>0$ and an integer $N_{r_\varepsilon}\in\mathbb{N}$ such that for all $t\geq N_{r_\varepsilon}$, for all $r<r_\varepsilon$ and all $y\in \Omega$
\begin{equation*}\label{eq:single-bowen-ineq}
\frac{-\log \mu(\widetilde{B}^{(-t,t)}_r(y))}{2t}\geq h_\mu(a)-\varepsilon
\end{equation*} 

Fix $B=B_{r_\varepsilon/2}$. 
For any $y\in Y$ we have $y B^{(-t,t)}\subseteq \widetilde{B}_{r_\varepsilon/2}^{(-t,t)}(y)$. Set $N_\Omega\coloneqq N_{r_\varepsilon/2}$.
\end{proof}
\begin{cor}\label{cor:entropy-ergodic}
Let $\mu$ be an $a$-invariant \emph{ergodic} probability measure then. Let $\varphi\colon Y\to[0,1]$ be a $\mathcal{X}$-measurable function. If $\varphi$ is continuous or it is the characteristic function of either a closed or an open subset of $X$ then the following holds.
\begin{enumerate}[label=(\alph*)]
\item If $1>\mu(\varphi)> m(\varphi)$ then for any $\lambda^\mathbb{N}\ni\theta<\frac{\mu(\varphi)-m(\varphi)}{1-m(\varphi)}$
we have
\begin{equation*}
\frac{D\left(\mu(\varphi) \,\|\, \theta+(1-\theta)m(\varphi)\right)}{-\log\theta}
\leq\frac{h_m(a)-h_\mu(a)}{-\log\lambda}
\end{equation*}
\item If $0<\mu(\varphi)< m(\varphi)$ then for any 
$\lambda^\mathbb{N}\ni\theta<\frac{m(\varphi)-\mu(\varphi)}{m(\varphi)}$
we have
\begin{equation*}
\frac{D\left(\mu(\varphi) \,\|\, (1-\theta)m(\varphi)\right)}{-\log\theta}
\leq\frac{h_m(a)-h_\mu(a)}{-\log\lambda}
\end{equation*}
\end{enumerate}
\end{cor}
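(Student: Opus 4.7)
The plan is to deduce the corollary by applying Proposition \ref{prop:limit-tight} to the constant sequence $\mu_i \coloneqq \mu$, using Lemma \ref{lem:brin-katok-ergodic} to verify the hypothesis on the average mass of Bowen balls. The constant sequence trivially converges to $\mu$ in the weak-$*$ topology, so the only non-trivial input to check is the existence of a family $\mathcal{F}$ of compact sets as in Proposition \ref{prop:limit-tight} satisfying the exponential decay estimate \eqref{eq:omega-bowen-bound}.

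Fix a small parameter $\varepsilon > 0$ and set $h \coloneqq h_\mu(a) - \varepsilon$. By Lemma \ref{lem:brin-katok-ergodic}, applied with this $\varepsilon$, there exist a compact subset $\Omega_\varepsilon \subseteq Y$ with $\mu(\Omega_\varepsilon) > 1 - \varepsilon$, an identity neighborhood $B_\varepsilon \subseteq G$, and an integer $N_\varepsilon$ such that for every integer $t > N_\varepsilon$ and every $y \in \Omega_\varepsilon$ one has
\begin{equation*}
\mu\bigl(y B_\varepsilon^{(-t,t)}\bigr) \leq e^{-2t(h_\mu(a) - \varepsilon)} = e^{-2 h t}.
\end{equation*}
Integrating this pointwise estimate over $\Omega_\varepsilon$ against $\mu$ yields
\begin{equation*}
\int_{\Omega_\varepsilon} \mu\bigl(y B_\varepsilon^{(-t,t)}\bigr) \dif\mu(y) \leq e^{-2h t},
\end{equation*}
so the hypothesis of Proposition \ref{prop:limit-tight} is satisfied with $C_{\Omega_\varepsilon} = 1$ along the sequence $t_i \coloneqq N_\varepsilon + i$. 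Letting $\mathcal{F}$ be the family $\{\Omega_\varepsilon\}_{\varepsilon > 0}$, the assumption that for every $\varepsilon > 0$ there exists $\Omega \in \mathcal{F}$ with $\limsup_i \mu_i(\Omega) \geq 1 - \varepsilon$ is immediate because $\mu_i = \mu$ and $\mu(\Omega_\varepsilon) > 1 - \varepsilon$.

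Proposition \ref{prop:limit-tight} therefore applies and gives, for every $\varepsilon > 0$ and every admissible $\theta \in \lambda^{\mathbb{N}}$, the inequality
\begin{equation*}
\frac{D\bigl(\mu(\varphi) \,\|\, \theta + (1-\theta) m(\varphi)\bigr)}{-\log\theta}
\leq \frac{h_m(a) - (h_\mu(a) - \varepsilon)}{-\log \lambda}
\end{equation*}
in case (a), and the analogous bound in case (b). The final step is to let $\varepsilon \to 0^+$; since the right-hand side is continuous in $\varepsilon$ and the admissibility range for $\theta$ in Proposition \ref{prop:limit-tight} does not depend on $\varepsilon$, this yields exactly the bounds claimed in the corollary.

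The only mild subtlety I would expect is checking that the sequence $t_i$ produced from $N_\varepsilon$ is compatible with the way Proposition \ref{prop:limit-tight} is invoked (it requires $t_i \to \infty$, which is automatic). Otherwise the argument is a clean reduction: Lemma \ref{lem:brin-katok-ergodic} converts the entropy lower bound $h_\mu(a) - \varepsilon$ into a uniform single-scale Bowen-ball bound on a large compact set, and Proposition \ref{prop:limit-tight} then upgrades this into the desired effective comparison between $\mu(\varphi)$ and $m(\varphi)$.
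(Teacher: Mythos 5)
Your proposal is correct and follows essentially the same route as the paper: apply Proposition \ref{prop:limit-tight} to the constant sequence $\mu_i=\mu$, with the family of compact sets supplied by Lemma \ref{lem:brin-katok-ergodic}, and remove the $\varepsilon$-loss (from using $h=h_\mu(a)-\varepsilon$) by letting $\varepsilon\to 0^+$, which is legitimate since the admissible range of $\theta$ depends only on $\mu(\varphi)$ and $m(\varphi)$. The only cosmetic point is that, for a fixed $\varepsilon_0$, the family $\mathcal{F}$ should consist of the sets $\Omega_\varepsilon$ with $\varepsilon\leq\varepsilon_0$ so that the single exponent $h=h_\mu(a)-\varepsilon_0$ works uniformly over $\mathcal{F}$; this does not affect the argument.
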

\begin{proof}
This follows directly from Proposition \ref{prop:limit-tight} with the constant sequence $\mu_i=\mu$ for all $i\in\mathbb{N}$. The family of compact sets we take in the hypothesis of Proposition \ref{prop:limit-tight} is the collection of all compact sets for all $\varepsilon>0$ provided by Lemma \ref{lem:brin-katok-ergodic}.
\end{proof}

Much stronger results then the following are known in most cases, \cite[\S 9]{MargulisTomanov}, \cite[Corollary 7.10]{Pisa}. Nevertheless, we think it is beneficial to state and prove this result in the general setting we are working in.
\begin{cor}
Under our standing assumptions every $a$-invariant and ergodic measure $\mu$ on $Y$ for which $h_\nu(a)\geq h_m(a)$ has the same pushforward to the state space $X$.
\end{cor}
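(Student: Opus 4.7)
My plan is to reduce the claim directly to Corollary \ref{cor:entropy-ergodic}, applied once to each continuous test function. Concretely, I aim to show that $\mu(\varphi) = m(\varphi)$ for every continuous $\varphi\colon X \to [0,1]$ (viewed via $\pi$ as a continuous $\mathcal{X}$-measurable function on $Y$). Since $X$ is second-countable locally compact Hausdorff, such test functions determine a Radon probability measure on $X$, so establishing this equality yields $\pi_*\mu = \pi_*m$ as required.

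To prove the numerical equality $\mu(\varphi) = m(\varphi)$, I would argue by contradiction. Suppose instead $\mu(\varphi) \neq m(\varphi)$. The two cases $\mu(\varphi) > m(\varphi)$ and $\mu(\varphi) < m(\varphi)$ are handled symmetrically via parts (a) and (b) of the corollary, so assume the former. Replacing $\varphi$ by $\tfrac{1}{2}\varphi + \tfrac{1}{4}$ if necessary, I may further assume $0 < m(\varphi) < \mu(\varphi) < 1$, which places us in the range where Corollary \ref{cor:entropy-ergodic}(a) is applicable; the rescaled function is still continuous, $\mathcal{X}$-measurable and $[0,1]$-valued, and the strict inequality $m < \mu$ is preserved.

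Now, since $\lambda < 1$, I can choose $k$ large enough that $\theta \coloneqq \lambda^k$ satisfies $\theta < \frac{\mu(\varphi) - m(\varphi)}{1 - m(\varphi)}$, equivalently $\theta + (1-\theta)m(\varphi) < \mu(\varphi)$. Strict positivity of the binary Kullback--Leibler divergence between distinct points of $(0,1)$ then gives
\[
\frac{D\bigl(\mu(\varphi)\,\|\,\theta+(1-\theta)m(\varphi)\bigr)}{-\log\theta} > 0.
\]
But by Corollary \ref{cor:entropy-ergodic}(a) this left-hand side is bounded above by $\frac{h_m(a) - h_\mu(a)}{-\log\lambda}$, which is $\leq 0$ under the entropy hypothesis $h_\mu(a) \geq h_m(a)$. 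This is the desired contradiction.

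Essentially all the technical content has been absorbed into Corollary \ref{cor:entropy-ergodic}, so I expect no real obstacle beyond the short bookkeeping above. The one minor subtlety is the reduction to the strict range $0 < m(\varphi), \mu(\varphi) < 1$, which the affine rescaling handles; and the final step of passing from equality on continuous $[0,1]$-valued test functions to equality of Radon measures, which is routine given the topology of $X$.
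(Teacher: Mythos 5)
Your proposal is correct and follows essentially the same route as the paper: both reduce to Corollary \ref{cor:entropy-ergodic} for continuous $\mathcal{X}$-measurable test functions and exploit positivity of the Kullback--Leibler divergence together with $h_m(a)-h_\mu(a)\leq 0$, the only cosmetic difference being that you derive an immediate contradiction for one small $\theta\in\lambda^{\mathbb{N}}$ (after an affine rescaling to avoid $\mu(\varphi)\in\{0,1\}$) whereas the paper squeezes $(1-\theta)m(\varphi)\leq\mu(\varphi)\leq\theta+(1-\theta)m(\varphi)$ and lets $\theta\to0$. No gaps.
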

\begin{proof}
Let $\varphi\colon Y\to[0,1]$ be any $\mathcal{X}$-measurable continuous function with $0<\mu(\varphi)<1$. 
By Corollary \ref{cor:entropy-ergodic} if $\mu(\varphi)>\theta+(1-\theta)m(\varphi)$ for some $\theta\in\lambda^\mathbb{N}$ then 
\begin{equation*}
D\left(\mu(\varphi) \,\|\, \theta+(1-\theta)m(\varphi)\right)\leq0
\end{equation*}
which implies $\mu(\varphi)=\theta+(1-\theta)m(\varphi)$. Similarly, we have $\mu(\varphi)=(1-\theta)m(\varphi)$ if $\mu(\varphi)<(1-\theta)m(\varphi)$ for some $\theta\in\lambda^\mathbb{N}$.

We have shown that we must have $(1-\theta)m(\varphi)\leq\mu(\varphi)\leq\theta+(1-\theta)m(\varphi)$ for all $\theta\in\lambda^\mathbb{N}$. Taking $\theta\to 0$ we have that $\mu(\varphi)=m(\varphi)$ which proves the statement.
\end{proof}

\begin{thm}\label{thm:rigidity-max-entr}
Assume in addition to our standing assumptions that the Haar measure is a measure of maximal entropy.
Let $\mu$ be an $a$-invariant probability measure then. Let $\varphi\colon Y\to[0,1]$ be a continuous $\mathcal{X}$-measurable function, then for any $\theta\in\lambda^\mathbb{N}$
\begin{equation}\label{eq:pinsker}
|\mu(\varphi)-m(\varphi)|\leq\sqrt{\frac{h_m(a)-h_\mu(a)}{-2\log\lambda}}\sqrt{-\log{\theta}}+\theta
\end{equation}
\end{thm}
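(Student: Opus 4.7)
The strategy is to extract an $L^\infty$-type bound from the Kullback--Leibler bound of Corollary \ref{cor:entropy-ergodic} via Pinsker's inequality in the ergodic case, and then extend to all $a$-invariant measures via ergodic decomposition; this last step is where the maximal entropy hypothesis intervenes decisively.

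First I would treat the case where $\mu$ is ergodic. By symmetry (either applying the argument to $1-\varphi$, or using part (b) of Corollary \ref{cor:entropy-ergodic} in place of part (a)) it suffices to bound $\mu(\varphi) - m(\varphi)$ when this quantity is positive. If $\mu(\varphi) - m(\varphi) \leq \theta$ the inequality \eqref{eq:pinsker} is immediate since $\theta$ appears as an additive term on the right-hand side. Otherwise $\theta < \mu(\varphi) - m(\varphi) \leq (\mu(\varphi) - m(\varphi))/(1-m(\varphi))$, so the admissibility hypothesis of Corollary \ref{cor:entropy-ergodic}(a) is satisfied and yields
\begin{equation*}
D\bigl(\mu(\varphi) \,\|\, \theta + (1-\theta)m(\varphi)\bigr) \leq \frac{(-\log\theta)\bigl(h_m(a) - h_\mu(a)\bigr)}{-\log\lambda}.
\end{equation*}
Applying Pinsker's inequality $D(p\|q) \geq 2(p-q)^2$ (valid for the natural-log binary KL divergence as defined in the paper), taking the square root of the non-negative quantity $\mu(\varphi) - \theta - (1-\theta)m(\varphi) = (\mu(\varphi) - m(\varphi)) - \theta(1 - m(\varphi))$, and using $\theta(1-m(\varphi)) \leq \theta$ gives precisely the ergodic version of \eqref{eq:pinsker}.

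To handle a general $a$-invariant $\mu$, I would invoke the ergodic decomposition $\mu = \int \mu_y \, d\mu(y)$, in which the $\mu_y$ are $a$-invariant and ergodic. By the affine decomposition of entropy, $h_\mu(a) = \int h_{\mu_y}(a) \, d\mu(y)$, and by the maximal entropy hypothesis $h_m(a) - h_{\mu_y}(a) \geq 0$ for $\mu$-a.e.\ $y$, which is exactly what is needed to keep the square root below real-valued. Applying the ergodic bound to each $\mu_y$, integrating, and using Jensen's inequality for the concave function $\sqrt{\cdot}$ gives
\begin{equation*}
|\mu(\varphi) - m(\varphi)| \leq \int |\mu_y(\varphi) - m(\varphi)| \, d\mu(y) \leq \sqrt{\tfrac{h_m(a)-h_\mu(a)}{-2\log\lambda}}\,\sqrt{-\log\theta} + \theta,
\end{equation*}
which is \eqref{eq:pinsker}.

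The only mild obstacle is careful bookkeeping of the ranges of $\theta$: verifying that the trivial regime ($|\mu_y(\varphi) - m(\varphi)| \leq \theta$) and the regime in which Corollary \ref{cor:entropy-ergodic} applies together cover all $\theta \in \lambda^{\mathbb{N}}$ uniformly in the ergodic component $y$. This is handled by the elementary observation $\theta(1 - m(\varphi)) \leq \theta$, together with the non-negativity of both summands on the right-hand side of \eqref{eq:pinsker}.
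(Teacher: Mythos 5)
Your argument is essentially the paper's own proof: in the ergodic case you apply Pinsker's inequality to the Kullback--Leibler bound of Corollary \ref{cor:entropy-ergodic}, splitting off the trivial regime $|\mu(\varphi)-m(\varphi)|\leq\theta$ exactly as the paper does, and you then pass to general $\mu$ via the ergodic decomposition, the affinity of entropy, and Jensen's inequality for $x\mapsto\sqrt{x}$, with the maximal-entropy hypothesis guaranteeing nonnegativity of the integrand. The only point you gloss over is the boundary case $\mu_y^{\mathcal{E}}(\varphi)\in\{0,1\}$ for an ergodic component, where Corollary \ref{cor:entropy-ergodic} does not apply as stated (its hypotheses $0<\mu(\varphi)<1$ are strict); the paper removes this restriction before integrating by applying the ergodic bound to $\varphi_\varepsilon=\varepsilon+(1-2\varepsilon)\varphi$ and letting $\varepsilon\to0$, a one-line regularization your write-up should include.
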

\begin{remark}
In  the $S$-arithmetic setting, 
we know that the Haar measure is a measure of maximal entropy, \cite[\S 7]{Pisa} and \cite[\S 9]{MargulisTomanov}. If the horospherical subgroups corresponding to $a$ generate the whole group then it actually a unique measure of maximal entropy.
\end{remark}

\begin{proof}
To pass from the Kullback-Leibler divergence to the usual absolute value we use  Pinsker's inequality, which in our case reduces to
\begin{equation*}
|a-b|\leq\sqrt{\frac{1}{2} D(a\|\,b)}
\end{equation*}
for all $0\leq a,b\leq 1$.

Assume first that $\mu$ is ergodic and that $0<\mu(\varphi)<1$.
If $\mu(\varphi)>\theta+(1-\theta)m(\varphi)$ we have from Corollary \ref{cor:entropy-ergodic} and Pinsker's inequality
\begin{align*}
\left|\mu(\varphi)-m(\varphi)-\theta(1-m(\varphi))\right|
&\leq \sqrt{\frac{1}{2} D\left(\mu(\varphi) \,\|\, \theta+(1-\theta)m(\varphi)\right)}\\
&\leq \sqrt{\frac{h_m(a)-h_\mu(a)}{-2\log\lambda}}\sqrt{-\log{\theta}}
\end{align*}
hence
\begin{align}
\left|\mu(\varphi)-m(\varphi)\right| \label{eq:ergodic-pinsker}
&\leq \sqrt{\frac{h_m(a)-h_\mu(a)}{-2\log\lambda}}\sqrt{-\log{\theta}}+|\theta(1-m(\varphi))|\\ \nonumber
&\leq \sqrt{\frac{h_m(a)-h_\mu(a)}{-2\log\lambda}}\sqrt{-\log{\theta}}+\theta \nonumber
\end{align}

We can deduce in a similar manner inequality \eqref{eq:ergodic-pinsker} also in the case that
$\mu(\varphi)<(1-\theta)m(\varphi)$.

The only case we have left to deal in the ergodic case is when $(1-\theta)m(\varphi)<\mu(\varphi)<\theta+(1-\theta)m(\varphi)$, but then one has the stronger inequality
\begin{equation*}
-\theta\leq-\theta m(\varphi)\leq\mu(\varphi)-m(\varphi)\leq \theta(1-m(\varphi))\leq\theta
\end{equation*}
We have conclude the proof when $\mu$ is ergodic, $0<\mu(\varphi)<1$  and $\theta\in\lambda^\mathbb{N}$ arbitrary.
To remove the restriction on $\mu(\varphi)$ look for every $1>\varepsilon>0$ at $\varphi_\varepsilon=\varepsilon+(1-2\varepsilon)\varphi$. We have $\varepsilon\leq\mu(\varphi_\varepsilon)\leq1-\varepsilon$. Apply inequality \eqref{eq:pinsker} to $\varphi_\varepsilon$ and take $\varepsilon\to 0$ to see that the same holds to $\varphi$.

To pass to a general $\mu$ we use the ergodic decomposition. Let $\mathcal{E}\subseteq\mathcal{Y}$ the $\sigma$-algebra of $a$-invariant sets. The measure disintegration of $\mu$ with respect to $\mathcal{E}$ is an ergodic decomposition, i.e.\ $\mu_y^\mathcal{E}$ is $a$-invariant and ergodic for $\mu$ almost every $y\in Y$.

Now fix $\theta\in\lambda^\mathbb{N}$ and integrate \eqref{eq:ergodic-pinsker} over the ergodic components
\begin{align*}
\left|\mu(\varphi)-m(\varphi)\right|&=\left|\int_Y \left[\mu_y^\mathcal{E}(\varphi)-m(\varphi)\right]\dif\mu(y)\right|
\leq\int_Y \left|\mu_y^\mathcal{E}(\varphi)-m(\varphi)\right|\dif\mu(y)\\
&\leq \int_Y {\sqrt{\frac{h_m(a)-h_{\mu_y^\mathcal{E}}(a)}{-2\log\lambda}}}\dif\mu(y) \sqrt{-\log{\theta}}+\theta\\
&\leq  \sqrt{\int_Y \frac{h_m(a)-h_{\mu_y^\mathcal{E}}(a)}{-2\log\lambda}\dif\mu(y)} \sqrt{-\log{\theta}}+\theta\\
&=  \sqrt{\frac{h_m(a)-h_{\mu}(a)}{-2\log\lambda}} \sqrt{-\log{\theta}}+\theta\\
\end{align*}
Where we have used the concavity of the function $x\mapsto \sqrt{x}$.
\end{proof}

\begin{cor}\label{cor:max-entrp-rig-simple}
Under the hypothesis of the Theorem \ref{thm:rigidity-max-entr} there is for every $\varepsilon>0$ an explicitly computable constant $C_{\varepsilon} >0$ such that
\begin{equation}\label{eq:sqrt-bound}
\left|h_m(a)-h_\mu(a)\right|\leq C_{\varepsilon} \left(h_m(a)-h_\mu(a)\right)^{1/2-\varepsilon}\left[(-\log\lambda)^{-1/2}+2\right]
\end{equation}
Where $C_{\varepsilon}$ depends \emph{only} on $\varepsilon$ and $h_m(a)$.
\end{cor}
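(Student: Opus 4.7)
The plan is to derive the bound from elementary considerations once the hypothesis of Theorem \ref{thm:rigidity-max-entr} is unpacked; no further analytic input beyond that hypothesis is needed. The hypothesis posits that the Haar measure $m$ is a measure of maximal entropy for the right $a$-action, so $h_\mu(a) \leq h_m(a)$ for every $a$-invariant probability measure $\mu$ on $Y$. Hence $|h_m(a) - h_\mu(a)| = h_m(a) - h_\mu(a) \geq 0$, and combined with the non-negativity of measure-theoretic entropy this gives the a priori bound $0 \leq h_m(a) - h_\mu(a) \leq h_m(a)$.

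Next I would invoke the elementary power inequality: for any $\varepsilon > 0$, $M > 0$ and $x \in [0,M]$, one has $x \leq M^{1/2+\varepsilon} x^{1/2-\varepsilon}$, which follows from $(x/M)^{1/2+\varepsilon} \leq 1$ (with the convention $0^{1/2+\varepsilon} = 0$). Substituting $x = h_m(a) - h_\mu(a)$ and $M = h_m(a)$ gives
\[
|h_m(a) - h_\mu(a)| \leq h_m(a)^{1/2+\varepsilon}\bigl(h_m(a) - h_\mu(a)\bigr)^{1/2-\varepsilon}.
\]

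Finally I would set $C_\varepsilon := h_m(a)^{1/2+\varepsilon}/2$. Since $\lambda \in (0,1)$ forces $(-\log\lambda)^{-1/2} + 2 \geq 2$, this yields
\[
|h_m(a) - h_\mu(a)| \leq 2C_\varepsilon \bigl(h_m(a) - h_\mu(a)\bigr)^{1/2-\varepsilon} \leq C_\varepsilon \bigl(h_m(a) - h_\mu(a)\bigr)^{1/2-\varepsilon}\bigl[(-\log\lambda)^{-1/2}+2\bigr],
\]
with $C_\varepsilon$ depending only on $\varepsilon$ and $h_m(a)$, exactly as stated.

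The substantive point is not an analytic estimate from Theorem \ref{thm:rigidity-max-entr} itself, but a cosmetic recasting of the trivial a priori bound $h_m(a) - h_\mu(a) \leq h_m(a)$ in a self-referential form compatible with the way inequality \eqref{eq:sqrt-bound} is invoked downstream. The main step in the plan — identifying that the entropy deficit is uniformly bounded by $h_m(a)$ — is essentially automatic from the standing hypothesis, and the remaining calculation is a one-line manipulation of powers; accordingly there is no genuine obstacle, and the degenerate case $h_\mu(a) = h_m(a)$ is vacuous.
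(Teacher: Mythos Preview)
Your argument is technically correct for the inequality exactly as printed, but you have been misled by a typo in the statement: the left-hand side of \eqref{eq:sqrt-bound} is meant to be $|\mu(\varphi)-m(\varphi)|$, not $|h_m(a)-h_\mu(a)|$. This is clear both from the paper's own proof of the corollary (which bounds $|\mu(\varphi)-m(\varphi)|$ throughout) and from how the corollary is invoked in the proof of Theorem \ref{thm:rigidity-max-alg}. With the intended left-hand side the inequality is no longer self-referential and is not a tautology; your trivial power-inequality argument says nothing about $|\mu(\varphi)-m(\varphi)|$.

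The actual content you are missing is an optimization of the free parameter $\theta\in\lambda^{\mathbb{N}}$ in inequality \eqref{eq:pinsker} of Theorem \ref{thm:rigidity-max-entr},
\[
|\mu(\varphi)-m(\varphi)|\leq\sqrt{\frac{D}{-2\log\lambda}}\sqrt{-\log\theta}+\theta,\qquad D\coloneqq h_m(a)-h_\mu(a).
\]
The paper splits into cases: if $\sqrt{D}>\lambda$ one takes $\theta=\lambda$ and gets $|\mu(\varphi)-m(\varphi)|<2\sqrt{D}$; otherwise one picks $k$ with $\lambda^{k+1}\leq\sqrt{D}\leq\lambda^k$ and sets $\theta=\lambda^{k+1}$, so that $\theta\leq\sqrt{D}$ and $-\log\theta\leq-\tfrac12\log D-\log\lambda$, yielding a bound of the form $\sqrt{-D\log D}/(-\log\lambda)^{1/2}+O(\sqrt{D})$. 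The $\varepsilon$-loss then enters only through the elementary estimate $-\log D\leq C_\varepsilon' D^{-2\varepsilon}$ on the bounded interval $D\in[0,h_m(a)]$. None of this is visible in your proposal, which never invokes \eqref{eq:pinsker} at all.
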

\begin{proof}
Denote $D\coloneqq h_m(a)-h_\mu(a)$.
If $\sqrt{D}>\lambda$ take $\theta=\lambda$ in Theorem \ref{thm:rigidity-max-entr}
\begin{align*}
|\mu(\varphi)-m(\varphi)|
&\leq\sqrt{\frac{D}{-2\log\lambda}}\sqrt{-\log{\theta}}+\theta
=\sqrt{\frac{D}{2}}+\lambda
<\sqrt{D}\left(1/\sqrt{2}+1\right)\\
&<2\sqrt{D}
\end{align*}

Otherwise, let $k\in\mathbb{N}$ such that $\lambda^{k+1}\leq\sqrt{D}\leq \lambda^k$ and take $\theta=\lambda^{k+1}$. We have 
\begin{align*}
\theta&\leq  \sqrt{D}\\
-\log\theta&\leq-\frac{1}{2}\log D-\log\lambda
\end{align*}
and
\begin{align}
|\mu(\varphi)-m(\varphi)|
&\leq\sqrt{\frac{D\log D}{4\log\lambda} +\frac{D}{2}}+\sqrt{D}
\label{eq:sqrt-log-bound}\\
&<\sqrt{\frac{D\log D}{4\log\lambda}} +\sqrt{D}/\sqrt{2}+\sqrt{D}
\nonumber
\end{align}

For all $\varepsilon>0$ we have $\frac{\log r}{r^{-\varepsilon}}\to_{r\to 0^+}0$, hence there is $C_\varepsilon'>0$ such that $-\log r\leq C_\varepsilon' r^{-\varepsilon}$ for all $0\leq r\leq \sqrt{h_m(a)}$.
\begin{equation*}
|\mu(\varphi)-m(\varphi)|
\leq \sqrt{C_\varepsilon'}/2\,D^{1/2-\varepsilon}
\left[\left(-\log\lambda\right)^{-1/2}+2\right]
\end{equation*}

The claim follows by taking $C_\varepsilon=\max\{\sqrt{C_\varepsilon'}/2,1\}$ and applying inequality \eqref{eq:sqrt-bound}  and \eqref{eq:sqrt-log-bound}.
\end{proof}

The following should be compared with \cite[Theorem 1.1]{Ruhr}, see the discussion in \S\ref{sec:intro-max}.
\begin{thm}\label{thm:rigidity-max-alg}
Let $\mathbf{G}$ be a connected simply connected absolutely almost simple linear algebraic group defined over $\mathbb{Q}$. Let $S$ be a finite set of places for $\mathbb{Q}$ including $\infty$, such that $G_S\coloneqq \prod_{v\in S} \mathbf{G}(\mathbb{Q}_v)$ is not compact. 

Let $p\in S$ be a finite place such that $G_p\coloneqq\mathbf{G}(\mathbb{Q}_p)$ is not compact. Fix $a\in G_p\cap\mathbf{G}(\mathbb{Q})$ $\mathbb{Q}_p$-regular and semisimple and not contained in a compact subgroup. 

Denote by $A<G_p$ the maximal split subtorus of a maximal torus including the element $a$. Let $\Delta$ be the affine Bruhat-Tits building associated to $G_p$.
Denote by $H<G_p$ the pointwise stabilizer of the apartment corresponding to $A$ in $\Delta$. 

Let $\Gamma<G_S$ be a congruence lattice and denote by $m$ the Haar probability measure on $\lfaktor{\Gamma}{G_S}$.

Let $\varphi\colon\sdfaktor{\Gamma}{G_S}{H}\to[0,1]$ be a continuous function that is smooth in the $p$-adic coordinate, i.e.\ when considered as a function on $\lfaktor{\Gamma}{G_S}$ it is invariant under some compact open subgroup $K_0<G_p$ containing $H$. Then for any $\varepsilon>0$ there is an explicitly computable constant $C>0$ depending only on $\varepsilon$, $G_p$, $K_0$ and $a$ such that for any $a$-invariant measure $\mu$ we have
\begin{equation*}
|\mu(\varphi)-m(\varphi)|\leq C \left(h_m(a)-h_\mu(a)\right)^{1/2-\varepsilon}
\end{equation*}
\end{thm}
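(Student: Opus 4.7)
The plan is to deduce the theorem from Corollary~\ref{cor:max-entrp-rig-simple}, applied to a suitable state space $X = \sdfaktor{\Gamma}{G_S}{K}$ through which $\varphi$ factors. To invoke that corollary I need to produce (i) a compact open $K < G_p$ with $H \le K \le K_0$ such that $\lfaktor{\Gamma}{G_S} \to X$ has property (M) with respect to the right $a$-action, (ii) a norm gap $\lambda \coloneqq \|T\|_{L^2_0(X)} < 1$, (iii) $a$-homogeneity of the Haar measure $m$, and (iv) maximality of the entropy $h_m(a) = \sup_\nu h_\nu(a)$.

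For (i), I would start from the arrow subgroup $K_\to$ associated to $a^{-1}$ and a special vertex $v_0$ in the apartment corresponding to $A$ provided by Proposition~\ref{prop:M-algebraic}. This $K_\to$ contains $H$ and gives property (M), but it need not be contained in the arbitrary $K_0$ from the hypothesis. I would refine the construction (for instance, by varying the choice of $v_0$ along the apartment, or by intersecting with stabilizers of additional chambers compatible with the $a^{-1}$-sector combinatorics that underlies Corollary~\ref{cor:M-criterion}) to produce $K$ with $H \le K \le K_0$ and with the same $K^{(s,t)}$ intersection structure as an arrow subgroup, so that the transfer remark following Corollary~\ref{cor:M-criterion} delivers property (M). With $K \le K_0$ the function $\varphi$ factors through $X$.

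For (ii), the norm gap $\lambda < 1$ would follow from the quantitative decay of matrix coefficients for the unitary $G_S$-representation on $L^2_0(\lfaktor{\Gamma}{G_S})$ (Burger--Sarnak, Clozel, Oh and collaborators, as cited in the introduction), applied to the $K$-bi-invariant vectors that control the averaging operator $T = P_X U P_X$; the resulting rate depends only on $G_p$, $K$, and $a$, and hence only on $G_p$, $K_0$ and $a$. Item (iii) is Proposition~\ref{prop:homogeneous}, and (iv) is the classical maximality of Haar measure for semisimple actions on $S$-arithmetic quotients cited in \S\ref{sec:intro-max}.

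With (i)--(iv) in hand, Corollary~\ref{cor:max-entrp-rig-simple} applied to $\mu$ and $\varphi$ on $X$ yields
\[
|\mu(\varphi) - m(\varphi)| \le C_\varepsilon \bigl(h_m(a) - h_\mu(a)\bigr)^{1/2-\varepsilon} \bigl[(-\log\lambda)^{-1/2} + 2\bigr],
\]
and absorbing the $\lambda$-dependent factor into a single constant $C$ depending on $\varepsilon$, $G_p$, $K_0$, and $a$ gives the stated bound. The main obstacle I foresee is step (i): the arrow subgroup naturally contains $H$ but is not automatically inside an arbitrary $K_0$, so one must work out the combinatorial refinement that enforces $K \le K_0$ while preserving the sector/chamber structure that drives property (M). A secondary technical point is extracting the norm bound in step~(ii) in a way that makes $\lambda$ depend only on the data listed in the theorem.
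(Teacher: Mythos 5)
Your overall framework (reduce to Corollary~\ref{cor:max-entrp-rig-simple} via a state space through which $\varphi$ factors, norm gap from the matrix-coefficient bounds, homogeneity from Proposition~\ref{prop:homogeneous}, maximality of $h_m$) matches the paper, but your step (i) --- which you yourself flag as the main obstacle --- is a genuine gap, and the refinements you sketch do not close it. Property (M) for the right $a$-action is established in the paper only for arrow subgroups $K_\to=I^+\cap I^-$ via the specific chamber geometry in Proposition~\ref{prop:arrow-M} (the Weyl distance between $\mathcal{C}^-$ and $a.\mathcal{C}^+$ being the longest element, the strong-isometry argument, the sector containments). Translating $v_0$ along the apartment only conjugates $K_\to$ and does not shrink it, and intersecting with stabilizers of additional chambers destroys exactly the coset structure $K=\coprod_j\omega_j K^{(0,1)}$ with $\omega_j\in K^{(-\infty,0)}$ that Corollary~\ref{cor:M-criterion} requires; there is no argument, and no obvious one, that an arbitrary compact open $K$ with $H\le K\le K_0$ admits property (M) for the \emph{same} element $a$.

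The paper's resolution is different and is the key idea you are missing: do not shrink the subgroup for the fixed element $a$, but pass to powers of $a$. For the $a^{2n}$-action the natural arrow subgroup is $K_n\coloneqq K_{(a^n.v_0,\,a^{-n}.v_0)}$, which stabilizes the rhomboid with corners $a^n.v_0$ and $a^{-n}.v_0$; these rhomboids increase to the whole apartment, so $\bigcap_{n\ge1}K_n=H$, and since $\left\{K_n\setminus K_0\right\}_n$ is a descending sequence of compact sets with empty intersection, $K_n\subseteq K_0$ for some finite $n$. Proposition~\ref{prop:M-algebraic} gives property (M) for $X=\faktor{Y}{K_n}$ with respect to $a^{2n}$, the norm gap $\lambda=\|T_{a^{2n}}\|_{L^2_0(X)}<1$ comes from \cite[\S 1.6]{GMO} for this compact open subgroup, and Corollary~\ref{cor:max-entrp-rig-simple} is then applied to the element $a^{2n}$, not to $a$. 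The identity $h_\nu(a^{2n})=2n\,h_\nu(a)$ converts the resulting bound back into one for $a$, at the cost of a factor $\sqrt{2n}$ absorbed into the constant $C$ (which legitimately depends on $K_0$ and $a$, since $n$ does). Without this passage to powers, your plan has no mechanism to reconcile ``$\varphi$ factors through $X$'' with ``$X$ has property (M)''.
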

\begin{remark}
In this setting the Haar measure is a measure of maximal entropy, \cite[\S 9]{MargulisTomanov} and \cite[\S 7.8]{Pisa}
\end{remark}
\begin{proof}
For the $S$-arithmetic quotients we consider there is an $L^2$-norm gap for the averaging operator $T$ for \emph{any} state space of the form $\faktor{Y}{V}$ where $V<G_p$ is a compact-open subgroup. The specific result we use appears in \cite[\S 1.6]{GMO}. See the discussion in \S\ref{sec:intro-max} regarding bounds on matrix coefficients.

If $\mu$ is $a$ invariant then it is $a^n$ invariant for all $n\in\mathbb{N}$. Let $\Delta$ be the affine Bruhat-Tits building associated to $G_p$. Let $\mathcal{A}\subset\Delta$ be the apartment corresponding to $A$.
For any special vertex $v\in\mathcal{A}$ we have associated in \S \ref{sec:walks-buildings} an arrow subgroup  $K_{(v,a^{-2n}.v)}$ for the $a^{-2n}$ action which stabilizes the rhomboid bounded by the corners $v$ and $a^{-2n}.v$. 

Fix a special vertex $v_0\in\mathcal{A}$ and consider the arrow subgroups $K_n\coloneqq K_{(a^n.v_0,a^{-n}.v_0)}$. The corresponding rhomboids $\mathcal{R}_n$ are a monotone ascending sequence of convex subsets of $\mathcal{A}$ which cover the apartment as $n\to\infty$. Hence $\bigcap_{n=1}^\infty K_n=H$ -- the pointwise stabilizer of the apartment. By Proposition \ref{prop:M-algebraic} all the state spaces $\faktor{Y}{K_n}$ have property (M) with respect to the right action by $a^{2n}$. 

If $\varphi$ is invariant under $K_0$ then the sequence $\left\{K_n\setminus K_0\right\}_{n=1}^\infty$ is a descending sequence of compact sets having empty intersection, hence there exists $n\in\mathbb{N}$ such that $K_n\setminus K_0=\emptyset \Leftrightarrow K_n\subseteq K_0$ and $\varphi$ is $K_n$ invariant. Let $T_{a^{2n}}$ be the averaging operator associated to $\faktor{Y}{K_n}$ and the right action by $a^{2n}$. Set $\lambda\coloneqq \|T_{a^{2n}}\|_{L^2_0(X)}$.

We have $\lambda<1$ by \cite[\S 1.6]{GMO}. By Corollary \ref{cor:max-entrp-rig-simple} 
\begin{align*}
|\mu(\varphi)-m(\varphi)|
&\leq C_\varepsilon (h_m(a^{2n})-h_\mu(a^{2n}))^{1/2-\varepsilon}\left[(\log\lambda)^{-1/2}+2\right]\\
&=C_\varepsilon\sqrt{2n}\left[(\log\lambda)^{-1/2}+2\right] (h_m(a)-h_\mu(a))^{1/2-\varepsilon}\\
\end{align*}
Set now $C=C_\varepsilon\sqrt{2n}\left[(\log\lambda)^{-1/2}+2\right]$.
\end{proof}

\section{Non-Escape of Mass}\label{sec:non-escape}
There are two flavors of related results, one where we have a sequence of measures with average mass of Bowen balls exponentially small on finer and finer scale and the second where we have a sequence of measures with high metric entropy. We shall focus only on the second case, although results of the first type can be deduced using our methods.

See \S \ref{sec:intro-non-escape} for a discussion about the relation about our theorem for non-escape of mass and previously known results in the archimedean place.

\begin{prop}\label{prop:bigset-entropy}
Assume in addition to our standing assumptions that the Haar measure is a measure of maximal entropy.
Let $\mu$ be an $a$-invariant probability measure. For any closed subset $F\subset X$ we have for all $\theta\in\lambda^\mathbb{N}$
\begin{align*}
1-\mu(F)
\le &\frac{\log\theta}{\log\left(\theta+(1-\theta)(1-m(F))\right)}
\left[\frac{h_m(a)-h_\mu(a)}{-\log\lambda}+\frac{1}{-\log\theta} \right]\\
&+ \theta+(1-\theta)(1-m(F))
\end{align*}
\end{prop}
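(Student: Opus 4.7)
The plan is to apply Corollary~\ref{cor:entropy-ergodic}(a) to $\varphi = \chi_{F^c}$, the characteristic function of the open complement of $F$, and then convert the resulting divergence bound into a linear upper bound on $\mu(F^c) = 1-\mu(F)$. Throughout set $p := m(F^c) = 1-m(F)$, $q := \mu(F^c) = 1-\mu(F)$, and $b := \theta + (1-\theta)p$, so that the right-hand side of the proposition becomes $\frac{-\log\theta}{-\log b}\cdot\frac{h_m(a)-h_\mu(a)}{-\log\lambda} + \frac{1}{-\log b} + b$.

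The first step is an elementary divergence inequality:
\begin{equation*}
(q-b)(-\log b) \leq D(q\,\|\,b) + 1 \qquad\text{for all } q\in[0,1],\ b\in(0,1).
\end{equation*}
When $q\leq b$ the left-hand side is non-positive and the inequality is trivial; when $q>b$ the identity
\begin{equation*}
D(q\,\|\,b) - (q-b)(-\log b) = -H(q) - (1-q)\log(1-b) - b\log b
\end{equation*}
settles it, since $H(q) = -q\log q - (1-q)\log(1-q) \leq \log 2 < 1$ while $-(1-q)\log(1-b)$ and $-b\log b$ are non-negative. Dividing by $-\log b$ and rearranging yields the pointwise conversion $q \leq \frac{D(q\,\|\,b)+1}{-\log b} + b$.

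Next, assume $\mu$ is ergodic. If $q \leq b$ the proposition is immediate. Otherwise $q > b$ is precisely the admissibility condition $\theta < (q-p)/(1-p)$ of Corollary~\ref{cor:entropy-ergodic}(a), which applies to $\chi_{F^c}$ since it is the characteristic function of an open set; it provides
\begin{equation*}
D(q\,\|\,b) \leq \frac{(h_m(a)-h_\mu(a))(-\log\theta)}{-\log\lambda},
\end{equation*}
and combining with the previous step gives the stated bound exactly. The only subtlety is the boundary case $q = 1$, which the hypothesis $1 > \mu(\varphi)$ of the corollary excludes: here I would inspect the proof of Proposition~\ref{prop:limit-tight} and observe that its invocation of Theorem~\ref{thm:effective}(a) remains well-defined in the limit $\mu(\varphi) \to 1$ (the quantity $(\mu(\varphi) - (1-\kappa))/\kappa \to 1$ and $D(1\,\|\,b) = -\log b$ is finite), so the conclusion of the corollary extends to $q = 1$ with essentially the same proof.

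Finally, for a general $a$-invariant $\mu$, pass through the ergodic decomposition $\mu = \int\mu_y^{\mathcal{E}}\,\dif\mu(y)$. Since $m$ is a measure of maximal entropy under the standing assumptions, $h_m(a) - h_{\mu_y^{\mathcal{E}}}(a) \geq 0$ for $\mu$-almost every $y$, so the ergodic-case bound applied to each component gives
\begin{equation*}
\mu_y^{\mathcal{E}}(F^c) \leq \frac{-\log\theta}{-\log b}\cdot\frac{h_m(a)-h_{\mu_y^{\mathcal{E}}}(a)}{-\log\lambda} + \frac{1}{-\log b} + b,
\end{equation*}
valid also in the trivial regime $\mu_y^{\mathcal{E}}(F^c) \leq b$ since the added entropy term is non-negative. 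Integrating over $y$ against $\mu$ and invoking the additivity identity $h_\mu(a) = \int h_{\mu_y^{\mathcal{E}}}(a)\,\dif\mu(y)$ for ergodic decompositions yields the proposition. The main obstacle I foresee is the careful treatment of the $q = 1$ boundary case for ergodic components, which amounts to a small extension of Corollary~\ref{cor:entropy-ergodic}; once that is pinned down, the remaining steps are routine.
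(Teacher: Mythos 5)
Your proposal is correct and essentially reproduces the paper's own argument: the paper applies Corollary \ref{cor:entropy-ergodic}(b) to $\chi_F$ on each ergodic component (equivalent to your use of part (a) on $\chi_{F^c}$, since $D(a\,\|\,b)=D(1-a\,\|\,1-b)$ and part (b) is part (a) for $1-\varphi$), uses the same elementary estimate $D(a\,\|\,b)\geq -a\log b-(1-a)\log(1-b)-1$, and then integrates over the ergodic decomposition, splitting into the components with $\mu_y^{\mathcal{E}}(F)<(1-\theta)m(F)$ and the rest exactly as in your ``trivial regime.'' The boundary case you flag (an ergodic component with $\mu_y^{\mathcal{E}}(F^c)=1$, i.e.\ $\mu_y^{\mathcal{E}}(F)=0$, which violates the hypothesis $0<\mu(\varphi)$ of the corollary) is silently glossed over in the paper's proof as well, so your explicit treatment of it is a harmless refinement rather than a different route.
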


\begin{proof}
Let $\mathcal{E}\subseteq\mathcal{Y}$ the $\sigma$-algebra of $a$-invariant sets. The conditional measure $\mu_y^\mathcal{E}$ is well-defined, $a$-invariant and ergodic for all $y\in Y_0$, with $\mu(Y_0)=1$. Fix $\theta\in\lambda^\mathbb{N}$. Set $Y_1\coloneqq\left\{y\in Y_0 \mid \mu_y^\mathcal{E}(F)<(1-\theta)m(F) \right\}$.
By Corollary \ref{cor:entropy-ergodic} for any $y\in Y_1$
\begin{equation}\label{eq:ergodic-C-KL}
\frac{D\left(\mu_y^\mathcal{E}(F) \,\|\, (1-\theta)m(F)\right)}{-\log\theta}
\leq\frac{h_m(a)-h_{\mu_y^\mathcal{E}}(a)}{-\log\lambda}
\end{equation}

We now expand $D\left(\mu_y^\mathcal{E}(F) \,\|\, (1-\theta)m(F)\right)$ using the explicit definition of the Kullback-Leibler divergence. For any $a,b\in[0,1]$
\begin{align*}
D\left(a \,\|\, b\right)&=-a\log(b)-(1-a)\log(1-b)-H(a)\\
&\geq -a\log(b)-(1-a)\log(1-b)-1
\end{align*}
where $H(a)=-a\log(a)-(1-a)\log(1-a)\leq 1$ is the binary entropy function in natural base. Using this inequality and \eqref{eq:ergodic-C-KL} we have for all $y\in Y_1$
\begin{align*}
&(1-\mu_y^\mathcal{E}(F))\frac{\log\left(\theta+(1-\theta)(1-m(F))\right)}{\log\theta}\\
&\leq\mu_y^\mathcal{E}(F) \frac{\log\left((1-\theta)m(F)\right)}{\log\theta}+(1-\mu_y^\mathcal{E}(F))\frac{\log\left(\theta+(1-\theta)(1-m(F))\right)}{\log\theta}\\
&\leq\frac{h_m(a)-h_{\mu_y^\mathcal{E}}(a)}{-\log\lambda}+\frac{1}{-\log\theta} 
\end{align*}

Denote $c\coloneqq\frac{\log\left(\theta+(1-\theta)(1-m(F))\right)}{\log\theta}>0$.
Integrating over $Y_1$ we have
\begin{equation}\label{eq:Y1-bound}
\int_{Y_1}(1-\mu_y^\mathcal{E}(F))\dif\mu(y)
\leq c^{-1}\int_{Y_1}\frac{h_m(a)-h_{\mu_y^\mathcal{E}}(a)}{-\log\lambda}\dif\mu(y)+c^{-1}\mu(Y_1)\frac{1}{-\log\theta} 
\end{equation}

Set $Y_2\coloneqq\left\{ y\in Y_0 \mid \mu_y^\mathcal{E}(F)\geq(1-\theta)m(F) \right\}$, then
\begin{align}
\int_{Y_2}(1-\mu_y^\mathcal{E}(F))\dif\mu(y)
&\leq \mu(Y_2)\left[\theta+(1-\theta)(1-m(F))\right]
\nonumber\\
&\leq c^{-1}\int_{Y_2}\frac{h_m(a)-h_{\mu_y^\mathcal{E}}(a)}{-\log\lambda}+\mu(Y_2)\left[\theta+(1-\theta)(1-m(F))\right]
\label{eq:Y2-bound}
\end{align}

Summing the inequalities \eqref{eq:Y1-bound} and \eqref{eq:Y2-bound} and using the linearity of the entropy function on the space of measures we have
\begin{align*}
1-\mu(F)
&\leq c^{-1}\left[\frac{h_m(a)-h_\mu(a)}{-\log\lambda}+\mu(Y_1)\frac{1}{-\log\theta}\right]\\
&+\mu(Y_2)\left[\theta+(1-\theta)(1-m(F))\right]\\
&\leq  c^{-1}\left[\frac{h_m(a)-h_\mu(a)}{-\log\lambda}+\frac{1}{-\log\theta}\right]+\theta+(1-\theta)(1-m(F))
\end{align*}
\end{proof}

\begin{cor}[Non-escape of mass]\label{cor:non-escape}
Suppose we are given a sequence of $a$-invariant probability measures $\mu_i$ on $Y$. Set $h=\liminf_{i\to\infty} h_{\mu_i}(a)$.

If $\mu_i$ converges to a measure $\mu$ in the weak-$*$ topology then
\begin{equation*}
\mu(Y)\geq 1- \frac{h_m(a)-h}{-\log(\lambda)}
\end{equation*}
\end{cor}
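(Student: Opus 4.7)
The plan is to derive Corollary \ref{cor:non-escape} from Proposition \ref{prop:bigset-entropy} by exhausting $Y$ with compact sets, passing to a subsequence along which the entropies attain their $\liminf$, invoking the Portmanteau theorem to push the weak-$*$ limit through, and then letting the scale parameter $\theta$ go to zero.

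First I would choose an increasing sequence of compact subsets $C_j\subset Y$ with $C_j\subset C_{j+1}$ and $\bigcup_j C_j=Y$, which exists since $Y$ is locally compact and $\sigma$-compact. Set $F_j\coloneqq \pi(C_j)\subset X$; each $F_j$ is compact, hence closed in $X$. The preimage $\pi^{-1}(F_j)=C_j K$ contains $C_j$, so by monotone convergence $m(\pi^{-1}(F_j))\to 1$ and $\mu(\pi^{-1}(F_j))\to \mu(Y)$. Throughout I will identify $F_j$ with its $K$-saturation $\pi^{-1}(F_j)\subset Y$ and write $m(F_j)$ and $\mu(F_j)$ for the corresponding masses.

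Next, fix $\theta\in\lambda^{\mathbb{N}}$. Proposition \ref{prop:bigset-entropy} applied to $\mu_i$ and the closed set $F_j$ yields
\begin{equation*}
1-\mu_i(F_j)\leq c_{j,\theta}\left[\frac{h_m(a)-h_{\mu_i}(a)}{-\log\lambda}+\frac{1}{-\log\theta}\right]+\theta+(1-\theta)(1-m(F_j)),
\end{equation*}
where $c_{j,\theta}\coloneqq \log\theta/\log\bigl(\theta+(1-\theta)(1-m(F_j))\bigr)$ is independent of $i$ and satisfies $c_{j,\theta}\to 1$ as $j\to\infty$. Extract a subsequence $\{i_k\}$ with $h_{\mu_{i_k}}(a)\to h$; it still converges weak-$*$ to $\mu$. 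Since $\pi^{-1}(F_j)$ is closed in $Y$, the Portmanteau theorem gives $\mu(F_j)\geq \limsup_k \mu_{i_k}(F_j)$, so passing to the limit $k\to\infty$ in the displayed inequality yields
\begin{equation*}
1-\mu(F_j)\leq c_{j,\theta}\left[\frac{h_m(a)-h}{-\log\lambda}+\frac{1}{-\log\theta}\right]+\theta+(1-\theta)(1-m(F_j)).
\end{equation*}

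Finally I would let $j\to\infty$, so that $m(F_j)\to 1$, $\mu(F_j)\to \mu(Y)$, $c_{j,\theta}\to 1$, and the term $(1-\theta)(1-m(F_j))$ vanishes, producing
\begin{equation*}
1-\mu(Y)\leq \frac{h_m(a)-h}{-\log\lambda}+\frac{1}{-\log\theta}+\theta,
\end{equation*}
and then let $\theta\to 0$ along $\lambda^{\mathbb{N}}$ (i.e.\ $\theta=\lambda^k$ with $k\to\infty$) to kill the last two terms. There is no real obstacle here, since all the substance is packaged in Proposition \ref{prop:bigset-entropy}; the only points demanding a bit of care are the subsequence extraction, which replaces the $\liminf$ of entropies by a genuine limit while preserving weak-$*$ convergence, and remembering to apply the Proposition to $K$-saturations of compact sets in $Y$, so that a bound on closed subsets of the state space $X$ yields a bound on the mass of subsets of $Y$ approximating $Y$ from inside.
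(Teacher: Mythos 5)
Your argument is correct and follows essentially the same route as the paper: apply Proposition \ref{prop:bigset-entropy} to an exhaustion of $X$ by compact sets, pass to the limit in $i$ using upper semicontinuity of the mass of compact sets, then let the exhaustion grow and $\theta\to0$. One caveat: your justification ``since $\pi^{-1}(F_j)$ is closed in $Y$, the Portmanteau theorem gives $\mu(F_j)\geq\limsup_k\mu_{i_k}(F_j)$'' is not valid as stated, because under weak-$*$ convergence with possible escape of mass the closed-set upper bound of Portmanteau fails in general (take $\mu_i=\delta_i$ on $\mathbb{R}$ converging to the zero measure and $F=\mathbb{R}$). What saves the step is that $\pi^{-1}(F_j)=C_jK$ is in fact \emph{compact}, being the continuous image of $C_j\times K$, and for compact sets the inequality $\limsup_k\mu_{i_k}(C_jK)\leq\mu(C_jK)$ does hold under weak-$*$ convergence; the paper makes this precise by viewing compact subsets of $Y$ as closed subsets of the one-point compactification. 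With that justification repaired, your subsequence extraction, the limit $j\to\infty$, and the final limit $\theta\to0$ along $\lambda^{\mathbb{N}}$ go through exactly as you describe.
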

\begin{proof}
Any compact subset $\Omega\subset Y$ is a closed subset of the one-point compactification of $Y$, hence $\mu(\Omega)\geq\limsup_{i\to\infty} \mu_i(\Omega)$. Fix a compact subset $\Omega\subset X$. Applying Proposition \ref{prop:bigset-entropy} to $\mu_i(\Omega)$ for all $i\in\mathbb{N}$ and taking $i\to\infty$ we have for all $\theta\in\lambda^\mathbb{N}$
\begin{align}
1-\mu(\Omega)
\leq&\frac{\log\theta}{\log\left(\theta+(1-\theta)(1-m(\Omega))\right)}
\left[\frac{h_m(a)-h}{-\log\lambda}+\frac{1}{-\log\theta} \right]
\label{eq:closed-big-limit}\\
&+ \theta+(1-\theta)(1-m(\Omega)) \nonumber
\end{align}

The space $Y$ is $\sigma$-compact, using the quotient map one can push forward  a countable collection of compact sets covering $Y$ to such a collection covering $X$, hence $X$ is also $\sigma$-compact.

If we now take an ascending sequence of compact sets $\Omega_k\subset X$ such $\bigcup_{k=1}^\infty \Omega_k=X$ then taking the limit $k\to\infty$ in inequality \eqref{eq:closed-big-limit} we have
\footnote{$\mu(X)=\mu(Y)$ and $m(Y)=1$}
\begin{align*}
1-\mu(Y)
\leq \frac{h_m(a)-h}{-\log\lambda}+\frac{1}{-\log\theta}+ \theta
\end{align*}
The claim follows by taking the limit $\theta\to0$.
\end{proof}

The following definition appears here in the form of \cite[\S2.2]{GMO}.
\begin{defi} \label{def:eta-function}
{\cite{Oh}.}
Let $\mathbf{G}$ be a connected absolutely almost simple group defined over $\mathbb{Q}$. Fix a finite rational prime $p$ and set $G_p\coloneqq \mathbf{G}(\mathbb{Q}_p)$. Let $A<G_p$ be a maximal $\mathbb{Q}_p$-split torus, $A^+$ a closed positive Weyl chamber.

Let $\Phi^+$ be a system of positive roots in the set of all non-multipliable roots of $G_p$ relative to $A^+$. Choose a maximal strongly orthogonal system $\mathcal{S}$ in $\Phi^+$ as defined and constructed in \cite{Oh}. Define
\begin{equation*}
\eta=\prod_{\alpha\in\mathcal{S}} \alpha
\end{equation*}
\end{defi}

\begin{remark}
We refer to \cite[Theorem A]{Oh} to a list of formulae  for $\eta$ for simple linear groups. We cite the relevant expression for $\mathbf{G}=\mathbf{SL}_n$ from \cite[Example 5.1]{COU}. Let $A<\mathbf{SL}_n(\mathbb{Q}_p)$ be the diagonal subgroup and set
\begin{equation*}
A^+=\left\{\diag(a_1,\ldots,a_n) \mid a_i\in\mathbb{N}, a_i>0 \textrm{ and }\forall 1\leq i\leq n-1\colon a_{i+1}|a_i  \right\}
\end{equation*}
then for all $a\in A^+$
\begin{equation*}
\eta(a)=\prod_{i=1}^{\lfloor n/2 \rfloor } \frac{|a_i|_p}{|a_{n+1-i}|_p}
\end{equation*}

For concreteness take $a=\diag(p^{n-1},p^{n-3},\ldots,p^{-(n-1)})$ then
\begin{equation*}
\begin{cases}
p^{2k^2} & \textrm{ if } n=2k\\
p^{2k(k+1)} & \textrm{ if } n=2k+1
\end{cases}
\end{equation*}
In contrast $p^{h_m(a)}=p^{2n(n+1)(n+2)/3}$. 
\end{remark}

\begin{thm}\label{thm:non-escape}
Let $\mathbf{G}$ be a connected simply connected absolutely almost simple linear algebraic group defined over $\mathbb{Q}$. Let $S$ be a finite set of places for $\mathbb{Q}$ including $\infty$, such that $G_S\coloneqq \prod_{v\in S} \mathbf{G}(\mathbb{Q}_v)$ is not compact. 

Let $p\in S$ be a finite place such that $G_p\coloneqq\mathbf{G}(\mathbb{Q}_p)$ is not compact, set $r=2$ if $\rank_{\mathbb{Q}_p} \mathbf{G}=1$ and $r=1$ if the rank is higher. 

Fix $a\in G_p\cap\mathbf{G}(\mathbb{Q})$ a $\mathbb{Q}_p$-regular and semisimple element not belonging to a compact subgroup. Let $A<G_p$ be a maximal split subtorus of a maximal torus containing $a$, set $A^+$ to be the closed Weyl chamber corresponding to $a$. Define $\eta$ with respect to these choices.  

Let $\Gamma<G_S$ be a congruence lattice and denote by $m$ the Haar probability measure on $\lfaktor{\Gamma}{G_S}$.

Suppose we are given a sequence of $a$-invariant probability measures $\mu_i$ on $Y$. Set $h=\liminf_{i\to\infty} h_{\mu_i}(a)$.
If $\mu_i$ converges to a measure $\mu$ in the weak-$*$ topology then
\begin{equation*}
\mu(Y)\geq 1-2r\frac{h_m(a)-h}{-\log\eta(a)}
\end{equation*}
\end{thm}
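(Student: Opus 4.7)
The plan is to apply Corollary~\ref{cor:non-escape} to the iterates $a^n$ for increasingly large $n$ on a fixed state space, letting $n\to\infty$ to absorb the constants arising from the bound on matrix coefficients.

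Fix $X \coloneqq \faktor{Y}{K}$ with $K<G_p$ the arrow subgroup associated to $a^{-1}$ and the apartment of $A$ in the Bruhat-Tits building of $G_p$. By Proposition~\ref{prop:M-algebraic}, $Y\to X$ has property~(M) with respect to the right action by $a$; since property~(M) for $S$ passes to every iterate $S^n$, the same state space has property~(M) with respect to every $a^n$, and by Proposition~\ref{prop:M-basic-properties} the associated averaging operator equals $T_a^n$. Set $\lambda_n \coloneqq \|T_{a^n}\|_{L^2_0(X)}$.

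Since $L^2_0(X) = L^2_0(Y)^K$ sits inside the unitary representation of $G_p$ on $L^2_0(Y)$, which has no non-trivial $G_p$-invariant vectors, the matrix-coefficient bounds of Oh~\cite{Oh}, Clozel-Oh-Ullmo~\cite{COU} and Gorodnik-Maucourant-Oh~\cite{GMO}, specialized to $K$-invariant vectors, supply a constant $C>0$ (depending on $K$ but not on $n$) with
\begin{equation*}
\lambda_n \;\leq\; C\,\eta(a^n)^{1/(2r)} \;=\; C\,\eta(a)^{n/(2r)}.
\end{equation*}
Each $\mu_i$ is also $a^n$-invariant, with $h_{\mu_i}(a^n) = n\,h_{\mu_i}(a)$, and $h_m(a^n) = n\,h_m(a)$. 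Applying Corollary~\ref{cor:non-escape} with the element $a^n$ and the state space $X$ yields, for $n$ large enough that $\lambda_n<1$,
\begin{equation*}
\mu(Y) \;\geq\; 1 - \frac{n\bigl(h_m(a) - h\bigr)}{-\log \lambda_n} \;\geq\; 1 - \frac{n\bigl(h_m(a) - h\bigr)}{\tfrac{n}{2r}\bigl(-\log \eta(a)\bigr) - \log C},
\end{equation*}
and passing to the limit $n\to\infty$ the additive constant $\log C$ becomes negligible, producing $\mu(Y)\geq 1-2r\,\frac{h_m(a)-h}{-\log\eta(a)}$, which is the claim.

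The main technical hurdle is extracting the bound $\lambda_n \leq C\,\eta(a)^{n/(2r)}$ with $C$ uniform in $n$ from the available matrix-coefficient literature. Since the element $a^n$ enters the unitary dual only through $\eta(a^n)=\eta(a)^n$ while $K$ stays fixed throughout, the dependence on the subgroup should factor out into a constant depending only on $K$; verifying this carefully requires tracking the constants in \cite{Oh,COU,GMO} when restricted to the trivial $K$-type. The factor $r=2$ in the rank-one case records the residual uncertainty from the Ramanujan conjecture for $\mathrm{GL}_2$ and would disappear under its full resolution.
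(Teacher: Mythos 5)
Your proposal is correct and follows essentially the same route as the paper: arrow subgroup and Proposition~\ref{prop:M-algebraic} to get property~(M) for every $a^n$ on a fixed state space, the bound of \cite{GMO} on $\|T_{a^n}\|_{L^2_0(X)}$ in terms of $\eta(a)^n$, then Corollary~\ref{cor:non-escape} applied to $a^n$ and the limit $n\to\infty$. The only (harmless) discrepancy is that \cite[Theorem 1.19 and Lemma 2.3]{GMO} give the exponent $(1/2-\varepsilon)/r$ with a constant $c(\Gamma,K_\to,\varepsilon)$ rather than exactly $1/(2r)$, so one keeps the $\varepsilon$ throughout and takes $\varepsilon\to0$ after $n\to\infty$, exactly as the paper does.
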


\begin{remark}
The factor of $r=2$ in the rank $1$ case is conjectured to be redundant \cite[Conjecture 2.15]{GMO}, this is related to the Ramanujan conjecture.
\end{remark}

\begin{proof}
Fix an arrow subgroup $K_\to<G_p$ corresponding to the right action by $a$ on $Y\coloneqq \lfaktor{\Gamma}{G_S}$. By Proposition \ref{prop:M-algebraic} the state space $X\coloneqq\faktor{Y}{K_\to}$ has property (M) for the right action by $a$, hence for the action by $a^n$ for all $n\in\mathbb{N}$. For $n\in\mathbb{N}$ let $T_{a^n}$ be the averaging operator induced by the $a^n$ action.

By \cite[Theorem 1.19 and Lemma 2.3]{GMO} for each $\varepsilon>0$ there exists $c=c(\Gamma,K_\to,\varepsilon)>0$ such that for all $n\in\mathbb{N}$
\begin{equation*}
\|T_{a^n}\|_{L^2_0(X)}\leq c \eta(a^n)^{\left(1/2-\varepsilon\right)/r}=c \eta(a)^{n\left(1/2-\varepsilon\right)/r}
\end{equation*}

Apply Corollary \ref{cor:non-escape} to the action by $a^n$ and use the fact that $h_{\mu_i}(a^n)=nh_{\mu_i}(a)$ to see that
\begin{equation*}
\mu(Y)\geq 1-\frac{nh_m(a)-nh}{-\left[n\left(1/2-\varepsilon\right)/r\right]\log\eta(a)-\log c}
\end{equation*}
The theorem follows by taking first the limit $n\to\infty$ and then $\varepsilon\to 0$.
\end{proof}

\appendix

\section{Local Entropy}
For lack of a good reference we present a proof of the following partial extension of the Brin-Katok \cite{BrinKatok} theorem to second countable metric spaces.
\begin{thm}\label{thm:bk-second-countable}
Let $(Y,d)$ be a second countable  metric space with a Borel $\sigma$-algebra $\mathcal{Y}$. Let $\mu$ be a Borel probability measure on $Y$ and $S\colon Y\to Y$ be an invertible continuous ergodic measure preserving transformation.
Define for every $y\in Y$, $r>0$ and $n\in\mathbb{N}$
\begin{equation*}
B_r^{(-n,n)}(y)=\left\{z\in Y \mid \forall -n\leq i\leq n\colon  d(S^i(z),S^i(y))<r \right\}
\end{equation*}

Then for $\mu$-almost every $y\in Y$
\begin{equation*}
\lim_{r\to 0} \liminf_{n\to\infty} -\frac{\log \mu(B_r^{(-n,n)}(y))}{2n+1}\geq h_\mu(S)
\end{equation*}
\end{thm}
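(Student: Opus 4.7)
The plan is to adapt the classical Brin-Katok proof to the second-countable (non-compact) setting by replacing finite partitions with countable measurable partitions of finite entropy, and to deduce the pointwise lower bound on the Bowen-ball exponent from the Shannon-McMillan-Breiman theorem together with the Birkhoff ergodic theorem.

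Fix $\varepsilon > 0$. Using second countability of $Y$, I would first construct a countable measurable partition $\xi$ of $Y$ with (i) atoms of arbitrarily small diameter, (ii) $\mu(\partial A)=0$ for every atom $A$, and (iii) $H_\mu(\xi)<\infty$ and $h_\mu(S,\xi)>h_\mu(S)-\varepsilon$. The zero-boundary property is obtained by selecting atoms from a countable base whose boundary spheres are $\mu$-null — for any fixed center only countably many radii can give spheres of positive measure. Finiteness of $H_\mu(\xi)$ is arranged by tapering the atom measures. The Shannon-McMillan-Breiman theorem for countable partitions of finite entropy under an ergodic transformation then yields, for $\mu$-a.e.\ $y$,
\begin{equation*}
-\frac{1}{2n+1}\log\mu(\xi_{-n}^n(y)) \to h_\mu(S,\xi) > h_\mu(S) - \varepsilon,
\end{equation*}
where $\xi_{-n}^n = \bigvee_{i=-n}^n S^{-i}\xi$.

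Next I would introduce the boundary neighbourhood $U_r = \{z \in Y : d(z,\partial\xi)<r\}$. Since $\mu(\partial\xi)=0$ we have $\mu(U_r)\downarrow 0$ as $r\downarrow 0$, and by Birkhoff's ergodic theorem the proportion of indices $i\in[-n,n]$ with $S^i y\in U_r$ tends to $\mu(U_r)$ for $\mu$-a.e.\ $y$. At a \emph{good} index $i$, i.e.\ one with $S^iy\notin U_r$, the ball $B_r(S^iy)$ lies entirely in the $\xi$-atom of $S^iy$, so any $z\in B_r^{(-n,n)}(y)$ must satisfy that $S^iz$ belongs to the same $\xi$-atom as $S^iy$. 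Consequently $B_r^{(-n,n)}(y)$ is contained in the union of those atoms of $\xi_{-n}^n$ whose $\xi$-coordinate sequence agrees with that of $y$ at every good index. Combining SMB with an entropy estimate for the contribution of the bad indices, one extracts
\begin{equation*}
\liminf_{n\to\infty}-\frac{\log\mu(B_r^{(-n,n)}(y))}{2n+1} \;\geq\; h_\mu(S,\xi) - \mathrm{err}(r),
\end{equation*}
with $\mathrm{err}(r)\to 0$ as $r\to 0$. Sending $r\to 0$ and then $\varepsilon\to 0$ gives the claim.

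The main obstacle lies in the bad-index contribution. Since $\xi$ is only countable, a single ball $B_r(S^iy)$ may meet infinitely many atoms of $\xi$, so the naive count of $\xi_{-n}^n$-atoms intersecting $B_r^{(-n,n)}(y)$ is infinite. I would circumvent this either by (a) truncating: work on a compact subset $K\subset Y$ with $\mu(K)>1-\varepsilon$, take $\xi$ to be a fine finite partition of $K$ together with the single atom $Y\setminus K$, and use Birkhoff to control how often orbits visit $Y\setminus K$; or (b) by a direct conditional-entropy argument: the bad indices contribute at most $|\text{bad}|\,H_\mu(\xi)/(2n+1) = O(\mu(U_r)H_\mu(\xi))$ to the exponent, which makes $\mathrm{err}(r)$ explicit. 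The remaining care is in making these approximations compatible with the $\liminf$ over $n$ and the subsequent $r\to 0$, $\varepsilon\to 0$ limits.
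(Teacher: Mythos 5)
Your overall route coincides with the paper's: the only genuinely new ingredient in the second-countable, possibly non-compact setting is the construction of a partition with $\mu$-null atom boundaries whose dynamical entropy is close to $h_\mu(S)$, after which the argument is the classical Brin--Katok one. The paper does exactly your option (a): it takes mutual refinements of $\mu$-continuity balls centred at a countable dense subset, intersected with a large ball $\Omega_l$, together with the single unbounded atom $Y\setminus\Omega_l$; this finite partition generates the Borel $\sigma$-algebra (so Kolmogorov--Sinai gives $h_\mu(S,\xi_l)\to h_\mu(S)$ -- small diameters are not needed), and the rest of the paper's proof is an explicit reproduction of Brin--Katok's argument.

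The gap is in how you propose to handle the bad indices. First, the obstacle is not that a countable partition lets a ball meet infinitely many atoms: even for the finite partition of option (a), the set $\zeta_n(y)$ of points whose $\xi$-itinerary agrees with that of $y$ at the good times is a union of exponentially many atoms of $\xi^{(-n,n)}$, and Shannon--McMillan--Breiman controls only the measure of the single atom containing $y$; it gives no control on the measures of the other atoms in that union, so no ``number of compatible atoms times the SMB bound'' estimate can work, finite or not. Second, your fix (b) is an in-expectation statement: conditional entropy bounds the \emph{average} conditional information of the bad coordinates given the good ones, and since the set of bad times is itself random (it depends on $y$ and $n$), this does not upgrade to the required almost-sure, eventually-in-$n$ upper bound on $\mu(\zeta_n(y))$. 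The classical proof, and the paper, close this gap by a different mechanism: one bounds the multiplicity with which the typical sets $\zeta_n$ can overlap (a fixed point lies in at most $\exp[(2n+1)H(2\mu(E^{(\varepsilon)}))]$ of them), deduces from this a bound on the number of typical $\zeta_n$'s of atypically large measure, and then runs a Borel--Cantelli argument combining SMB for $\xi$ with SMB for an auxiliary partition $\eta$ recording visits to the boundary collar. If by ``adapting the classical Brin--Katok proof'' you mean importing that counting/Borel--Cantelli machinery wholesale after the truncation of option (a), your proposal is correct and is essentially the paper's proof; as written, the crucial mechanism is replaced by a heuristic that does not suffice. (A minor point, shared with the paper: in a general metric space the step ``$d(S^iy,\partial\xi)>r$ implies $B_r(S^iy)\subseteq\xi(S^iy)$'' should be formulated with the metric $r$-collar of each atom rather than the topological boundary, since atoms may be clopen with empty boundary; the collars still decrease to a $\mu$-null set, so the argument is unaffected.)
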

\begin{proof}
We prove that for all $\delta>0$ there exists $\varepsilon>0$ such that $\mu$-almost every $y\in Y$ for all $n$ large enough and $r<\varepsilon$
\begin{equation*}
\log \mu(B_r^{(-n,n)}(y))\geq \exp\left[-(2n+1)(h_\mu(S)-\delta)\right]
\end{equation*}

\paragraph{Constructing a partition with $\mu$-continuous atoms and high entropy.}
Recall that $C\subseteq Y$ is called a $\mu$-continuity set if $\mu(\partial C)=0$.
For any point $y\in Y$ all but countable many of the balls $\left\{B_r(y) \mid r\in \mathbb{R}_{>0} \right\}$ are $\mu$-continuity sets. Hence for all $y\in Y$ there is a countable dense subset of $\mathbb{R}_{>0}$
\begin{equation*}
R_y=\left\{r_y^1,r_y^2,\ldots\right\}
\end{equation*}
such that $B_{r_y^i}(y)$ is a $\mu$-continuity set for all $i\in\mathbb{N}$.

Let $\left\{\Omega_k = B_{\rho_k}(y_0) \right\}_{k=1}^\infty$ be a sequence of $\mu$-continuous  balls centered at a common point $y_0$ with radii $\rho_k\to_{k\to\infty}\infty$. 

Let $\{y_j\}_{j=1}^\infty$ be a dense countable subset of $X$. Define the finite partition $\xi_l$ to be composed of all the mutual refinements of the sets 
\begin{equation*}
\left\{ B_{r^i_{y_j}} \cap \Omega_k \mid 1\leq i,j,k \leq l \right\} \cup \left\{ Y\setminus \Omega_l \right\}
\end{equation*}

For every open ball $B(y,r) \subset Y$ we can find a subsequence $\left\{y_{j_m}\right\}_{m=1}^\infty$ such that $d(y_{j_m}, y)<1/m$. For each $m\in\mathbb{N}$ let $i_m$ be such that $|r^{i_m}_{y_{j_m}}-r|<1/m$. Let $k\in\mathbb{N}$ such that $B(y,r+2)\subset \Omega_k.$. The ball $B_m\coloneqq B_{r^{i_m}_{y_{j_m}}}(y_{j_m})\subseteq B(y,r+2)$ belongs to $\sigma(\xi^l)$ for $l=\max\left\{i_m, j_m, k\right\}$. We have
\begin{equation*}
B(y,r)=\bigcap_{M=1}^\infty \bigcup_{m=M}^\infty B_m
\end{equation*}
Hence the $\sigma$-algebra generated by all the partitions $\left\{\xi_l\right\}_{l=1}^\infty$ is $\mathcal{Y}$.

These partitions satisfy $\sigma(\xi_l)\nearrow_{l\to\infty} \mathcal{Y}$ hence $h_\mu(S,\xi_l)
\to_{l\to_\infty} h_\mu(S)$. Moreover, all the subsets of the partitions $\left\{\xi_l \right\}_{l=1}^\infty$ are $\mu$-continuity sets.

Fix $\xi=\xi_l$ for $l$ large enough so that 
\begin{equation}\label{eq:xi-entropy}
h_\mu(\xi, S)>h_\mu(S)-\delta/4
\end{equation}

The proof continues from here using the same method as \cite{BrinKatok}; we provide the details for completeness sake.
\paragraph{Picking up good atoms}
For a set $C\subseteq Y$ and $\varepsilon>0$ define 
\begin{equation*}
C^{(\varepsilon)}\coloneqq\left\{y\in Y| d(y, C)\leq\varepsilon \right\}
\end{equation*} 
Set $E\coloneqq\bigcup_{W\in \xi} \partial W $. Because $\mu(E)=0$ we have $\mu\left(E^{(\varepsilon)}\right)\to_{\varepsilon\to 0} 0$. 

For any $y\in Y$ let 
\begin{equation*}
I_n(y)\coloneqq\left\{-n\leq i \leq n\mid S^{i}y\not\in E^{(\varepsilon)} \right\}
\end{equation*}
Define $\zeta_n(y) \coloneqq \bigcap_{i\in I_n(y)} S^{-i}\left(\xi\right)(y)$ 

\paragraph{The relation between $\zeta_n(y)$ and $B_\varepsilon^{(-n,n)}(y)$}
The set $\zeta_n(y)$ consists of all the points whose orbit in the times $i\in I_n(y)$ visits the same atoms of $\xi$ as $y$. If $y'\in B_\varepsilon^{(-n,n)}(y)$ then in all the times $-n\leq i \leq n$ we have $S^i(y')\in B_\varepsilon(S^i(y))$. For any $i\in I_n(y)$ we have $B_\varepsilon(S^i(y))\subseteq \xi(y)$, hence $S^i(y')\in \xi(a)$. This proves that $B_\varepsilon^{(-n,n)}(y)\subseteq \zeta_n(y)$.

Thus we need only prove for $\mu$-almost every $y\in Y$
\begin{equation*}
\lim_{\varepsilon\to 0} \liminf_{n\to\infty} \frac{-\log\mu\left(\zeta_n(y)\right)}{2n+1}\geq h_\mu(S)-\delta
\end{equation*}

\paragraph{Estimating the number of typical sets $\zeta_n(y)$ containing a common point} 
Define
\begin{equation*}
\mathcal{Z}_n\coloneqq \left\{\zeta_n(y) \mid y\in Y\textrm{ and } |I_n(y)|>(2n+1)(1-2\mu(E^{(\varepsilon)})) \right\}
\end{equation*}  
The family $\mathcal{Z}_n$ is a collection of subsets of $Y$. The sets $\zeta_n(y)$ in $\mathcal{Z}_n$ are those for which $y$ visits the set $E^{(\varepsilon)}$ in the typical way. Indeed, applying the ergodic theorem to the function $\chi_{E^{(\varepsilon)}}$ we see that for $\mu$-almost every $y\in Y$ one has that $\zeta_n(y)\in \mathcal{Z}_n$ for all large enough $n$.

The sets in $\mathcal{Z}_n$ are not necessarily mutually disjoint. Fix $y_0\in Y$, how many sets in $\mathcal{Z}_n$ contain it? If $y_0\in \zeta_n(y)$ then for all $i\in I_n(y)$ we have that $\xi(S^i y)=\xi(S^i y_0)$. Hence each such $\zeta_n(y)$ is the intersection of at least $(2n+1)(1-2\mu(E^{(\varepsilon)}))$ sets from the collection
\begin{equation*}
\left\{S^{-i}(\xi)(y_0)\mid -n\leq i\leq n \right\}
\end{equation*}
This implies that the number of sets in $\mathcal{Z}_n$ containing $y$ is bounded above by
\begin{align*}
F_n(\varepsilon)
&\coloneqq\sum_{m=(2n+1)(1-2\mu(E^{(\varepsilon)}))}^{2n+1} \binom{2n+1}{m}
=\sum_{m=0}^{(2n+1)2\mu(E^{(\varepsilon)})} \binom{2n+1}{m}\\
&\leq \exp\left[(2n+1)H(2\mu(E^{(\varepsilon)}))\right]
\end{align*}
Where $H(a)=-a\log a -(1-a)\log(1-a)$ is the binary entropy function in natural base.
What is important for us is that the bound is uniform in $y$ and 
\begin{equation}\label{eq:covering-eps}
H(2\mu(E^{(\varepsilon)}))\to_{\varepsilon\to0} 0
\end{equation}

\paragraph{Estimating the number of typical sets $\zeta_n(y)$  having atypically big measure} 
We want to estimate how many sets there are in the family
\begin{equation*}
\mathcal{Z}^\mathrm{big}_n\coloneqq \left\{C\in \mathcal{Z}_n \mid \mu(C)> \exp\left[-(2n+1)(h_\mu(S)-\delta)\right] \right\}
\end{equation*}
The sets in $\mathcal{Z}^\mathrm{big}_n$ are those having measure larger that what we expect asymptotically. Because $\mu$-almost every $y\in Y$ has $\zeta_n(y)\in \mathcal{Z}_n$ for all $n$ large enough,
we deduce that for $\mu$-almost every $y\in Y$ for all $n$ large enough if 
\begin{equation}\label{eq:mu-zeta-n}
\mu(\zeta_n(y))>\exp\left[-(2n+1)(h_\mu(S)-\delta)\right]
\end{equation}
then $\zeta_n(y)\in\mathcal{Z}^\mathrm{big}_n$.

We now bound the size of $\mathcal{Z}^\mathrm{big}_n$.
\begin{align*}
|\mathcal{Z}^\mathrm{big}_n|\exp\left[-(2n+1)(h_\mu(S)-\delta)\right] &\leq \sum_{C\in \mathcal{Z}^\mathrm{big}_n} \mu(C)=\int \sum_{C\in \mathcal{Z}^\mathrm{big}_n} \chi_C(y) \dif\mu (y)\\
&\leq \int F_n(\varepsilon) \dif\mu(y)=F_n(\varepsilon)
\end{align*}
Hence 
\begin{equation}\label{eq:ZnBig-bound}
|\mathcal{Z}^\mathrm{big}_n|\leq \exp\left[(2n+1)\left(h_\mu(S)-\delta+H(2\mu(E^{(\varepsilon)}))\right)\right] 
\end{equation}

\paragraph{Concluding the proof}
Define the auxiliary partition 
\begin{equation*}
\eta\coloneqq \left\{ W\cap E^{(\varepsilon)} \mid W\in \xi \right\} \cup \left\{ Y\setminus E^{(\varepsilon)} \right\}
\end{equation*}
The number of atoms in $\eta$ does not depend on $\varepsilon$ but it has one atom, $Y\setminus E^{(\varepsilon)}$, whose measure goes to $1$ when $\varepsilon\to 0$, hence 
\begin{equation}\label{eq:eta-bound}
h_\mu(S,\eta)\leq h_\mu(\eta)\to_{\varepsilon\to 0} 0
\end{equation}

For any partition $\alpha$ of $Y$ denote $\alpha^{(-n,n)}\coloneqq \bigvee_{i=-n}^n S^{-i}(\alpha)$.
By the definition of $\zeta_n$ we see for all $y\in Y$ that 
\begin{align}
\zeta_n(y) &\cap \eta^{(-n,n)}(y)\subseteq \xi^{(-n,n)}(y)
\label{eq:zeta-eta-bound}\\
&\implies \mu\left(\zeta_n(y) \cap \eta^{(-n,n)}(y)\right) \leq \mu\left(\xi^{(-n,n)}(y) \right)\nonumber
\end{align}

Define
\begin{align*}
\mathcal{X}_n&=\left\{\xi^{(-n,n)}(y) \mid y\in Y \textrm{ and } \mu (\xi^{(-n,n)}) < \exp\left[-(2n+1)(h_\mu(S,\xi)-\delta/4)\right] \right\}\\
\mathcal{H}_n&=\left\{\eta^{(-n,n)}(y) \mid y\in Y \textrm{ and }  \mu (\eta^{(-n,n)} )> \exp\left[-(2n+1)(h_\mu(S,\eta)+\delta/4)\right] \right\}
\end{align*}
These are families of typical atoms for $\xi^{(-n,n)}$ and $\eta^{(-n,n)}$ respectively. By the Shannon-MacMillan-Breiman for $\mu$-almost every $y\in Y$ we have for all $n$ large enough $\xi^{(-n,n)}(y)\in \mathcal{X}_n$ and $\eta^{(-n,n)}(y)\in \mathcal{H}_n$. All the sets in  $\mathcal{H}_n$ are mutually disjoint, thus
\begin{equation}\label{Hn-bound}
|\mathcal{H}_n| \leq \exp\left[(2n+1)(h_\mu(S,\eta)+\delta/4)\right] 
\end{equation}

Finally let
\begin{align*}
\mathcal{B}_n=\Big\{\zeta_n(y)\cap\eta^{(-n,n)}(y) \mid y\in Y &\textrm{, } \xi^{(-n,n)}(y)\in\mathcal{X}_n \\
&\textrm{, }  \eta^{(-n,n)}(y)\in\mathcal{H}_n \textrm{ and } \zeta_n(y)\in \mathcal{Z}^\mathrm{big}_n\Big\}
\end{align*} 
For any $C\in \mathcal{B}_n$ we have by \eqref{eq:zeta-eta-bound} and the definition of $\mathcal{X}_n$
\begin{equation}\label{eq:zeta-eta-bound2}
\mu(C)\leq \exp\left[-(2n+1)(h_\mu(S,\xi)-\delta/4)\right] 
\end{equation}

We can now give the main bound on the measure of the union of all the sets in $\mathcal{B}_n$ multiplying \eqref{eq:ZnBig-bound}, \eqref{Hn-bound} and \eqref{eq:zeta-eta-bound2}
\begin{align*}
&\mu\left({\bigcup_{C\in\mathcal{B}_n}}C \right) 
\leq |\mathcal{Z}^\mathrm{big}_n| |\mathcal{H}_n| \exp\left[-(2n+1)(h_\mu(S,\xi)-\delta/4)\right]\\ 
&\leq \exp\left[-(2n+1)\left(h_\mu(S,\xi)-h_\mu(S)-h_\mu(S,\eta)+\delta/2+H(2\mu(E^{(\varepsilon)}))\right)\right]\\
&\leq \exp\left[-(2n+1)\left(\delta/4-h_\mu(S,\eta)+H(2\mu(E^{(\varepsilon)}))\right)\right]
\end{align*}
By \eqref{eq:covering-eps} and \eqref{eq:eta-bound} for any $\varepsilon>0$ small enough we have 
\begin{equation*}
|h_\mu(S,\eta)-H(2\mu(E^{(\varepsilon)}))|<\delta/8
\end{equation*}
and
\begin{equation*}
\mu\left({\bigcup_{C\in\mathcal{B}_n}}C \right) <  \exp\left[-(2n+1)\delta/8\right]
\end{equation*}
\end{proof}
As this bound is summable, by Borel-Cantelli $\mu$-almost every $y\in Y$ belongs to no more then finitely many of these unions. Hence for $\mu$-almost every $y\in Y$ one of the three conditions appearing in the definition of $\mathcal{B}_n$ occurs only finitely many times, but that first two occur $\mu$-almost every $y$ for all $n$ large enough by the Shannon-McMillan-Breiman theorem. Hence $\mu$-almost every $y\in Y$ has $\zeta_n(y)\not\in \mathcal{Z}^\mathrm{big}_n$ for all $n$ large enough, but as we have discussed before this implies by the ergodic theorem that for $\mu$-almost every $y\in Y$ inequality \eqref{eq:mu-zeta-n} holds for all $n$ large enough.

\section{Homogeneity of the Haar Measure}
The following is standard, at least in the split case. We provide a proof for completeness sake.
\begin{prop}\label{prop:homogeneous}
Let $G=G_0\times G_p$ be a product of locally compact groups and $\Gamma<G$ a lattice. Suppose that $G_p=\mathbf{G}(\mathbb{Q}_p)$ for 
$\mathbf{G}$ a linear reductive group defined over $\mathbb{Q}$ and $p$ a finite rational prime. 

Let $a\in G_p$ be semisimple. By abuse of notation denote by $m$ the Haar measure on $G$ and $\lfaktor{\Gamma}{G}$ and normalize $m$ on $G$ so it induces a probability measure on $\lfaktor{\Gamma}{G}$.
Then there is a base for the topology of $G$ around the identity consisting of $(a,h_m(a))$-homogeneous neighborhoods.
\end{prop}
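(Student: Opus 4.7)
The plan is as follows. Since $a\in G_p$ commutes with every element of $G_0$, conjugation by $a$ is trivial on $G_0$; hence for any identity neighborhoods $B_0\subseteq G_0$ and $B_p\subseteq G_p$, the Bowen ball of $B\coloneqq B_0\times B_p$ factors as $B^{(s,t)}=B_0\times B_p^{(s,t)}$. It therefore suffices to build a base of $(a,h_m(a))$-homogeneous neighborhoods in $G_p$ and take products with an arbitrary identity neighborhood base in $G_0$.

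Inside $G_p$, the semisimplicity of $a$ makes $\Ad(a)$ diagonalizable on $\mathfrak{g}_p\otimes\overline{\mathbb{Q}_p}$, yielding a $\mathbb{Q}_p$-vector space decomposition $\mathfrak{g}_p=\mathfrak{u}^-\oplus\mathfrak{z}\oplus\mathfrak{u}^+$ according to whether the associated character $\chi$ of $\Ad(a)$ satisfies $|\chi|_p<1$, $=1$, or $>1$. Let $U^\pm$ and $Z=Z_{G_p}(a)$ be the corresponding closed subgroups. The structure theory of $p$-adic reductive groups supplies the open Bruhat cell $U^-\times Z\times U^+\to G_p$ -- a homeomorphism onto a neighborhood of the identity, along which the Haar measure factors, up to a fixed constant and using the unimodularity of $G_p$, as a product of Haar measures on the factors -- as well as an identity neighborhood base of compact open subgroups $B^-\subset U^-$, $B^+\subset U^+$, and $B^0\subset Z$ satisfying $aB^-a^{-1}\subseteq B^-$, $a^{-1}B^+a\subseteq B^+$, and $aB^0a^{-1}=B^0$.

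Setting $B\coloneqq B^-B^0B^+$, the families $(a^{-i}B^-a^i)_i$ and $(a^{-i}B^+a^i)_i$ are respectively ascending and descending in $i$, and combined with the uniqueness of the $U^-ZU^+$-factorization of each $a^iga^{-i}\in B$ this yields, for $s\leq 0\leq t$,
\begin{equation*}
B^{(s,t)}=(a^{-s}B^-a^s)\cdot B^0\cdot(a^{-t}B^+a^t).
\end{equation*}
The Jacobian of $g\mapsto a^{-i}ga^i$ on $U^\pm$ is $\prod_{\alpha\in\Phi(\mathfrak{u}^\pm)}|\alpha(a)|_p^{-i\dim\mathfrak{g}_\alpha}$; together with the Margulis--Tomanov entropy formula
\begin{equation*}
h_m(a)=-\sum_{\alpha\in\Phi(\mathfrak{u}^-)}\dim\mathfrak{g}_\alpha\,\log|\alpha(a)|_p=\sum_{\alpha\in\Phi(\mathfrak{u}^+)}\dim\mathfrak{g}_\alpha\,\log|\alpha(a)|_p
\end{equation*}
(the two expressions coinciding by unimodularity), a direct volume computation gives
\begin{equation*}
m(B^{(s,t)})=C_B\,e^{-h_m(a)(t-s)}\geq C_B\,e^{h_m(a)}\cdot e^{-h_m(a)(t-s+1)},
\end{equation*}
where $C_B>0$ depends only on $B^-,B^0,B^+$ and the fixed Haar normalizations. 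Hence $B$ is $(a,h_m(a))$-homogeneous with constant $C_Be^{h_m(a)}$, and shrinking the three factors produces the required base.

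The substantive work is not the Jacobian but rather marshalling the structural inputs: the open Bruhat cell with its Haar measure factorization, the basis of $a$-stable compact open subgroups in each horospherical (which exist because $\Ad(a)$ is contracting on $\mathfrak{u}^\pm$ with respect to the appropriate order), and the entropy formula. All are standard in $p$-adic reductive group theory, though the entropy formula requires some care when $\mathbf{G}$ is not $\mathbb{Q}_p$-split, in which case one works with the restricted $\mathbb{Q}_p$-root system and weighs each root by the $\mathbb{Q}_p$-dimension of its root space.
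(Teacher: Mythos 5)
Your route differs from the paper's (the paper never uses a group-level cell decomposition: it constructs, via Galois descent from the splitting field of a maximal torus containing $a$, $\Ad_a$-adapted $\mathbb{Z}_p$-lattices in $\mathfrak{g}_p$ and takes $\exp$ of their $p^l$-scalings), and your Jacobian/volume computation is the right computation; but as written there is a genuine gap in the middle factor of your decomposition. You define $\mathfrak{z}$ as the sum of $\Ad(a)$-eigenspaces with $|\chi|_p=1$, yet you take the group $Z=Z_{G_p}(a)$, whose Lie algebra is only the fixed space of $\Ad(a)$. These differ whenever $\Ad(a)$ has an eigenvalue $\lambda\neq1$ with $|\lambda|_p=1$, which is the typical situation for the semisimple elements the proposition must cover: e.g.\ $a=\diag(pu,pu^{-1},p^{-2})\in\mathbf{SL}_3(\mathbb{Q}_p)$ with $u\in\mathbb{Z}_p^\times$ of infinite order is regular, semisimple and unbounded, but $\Ad(a)$ has the unit eigenvalue $u^{2}\neq1$, so $\Lie Z_{G_p}(a)\subsetneq\mathfrak{z}$; the same happens for elements of non-split maximal tori (elliptic components produce unit eigenvalue ratios $\neq1$), precisely the non-split generality the paper insists on. In such cases $U^-\times Z\times U^+\to G_p$ is injective but \emph{not} open, so $B=B^-B^0B^+$ is not an identity neighborhood: your family is not a base of the topology, and moreover $m(B^{(s,t)})=0$, so the claimed lower bound is vacuous rather than a homogeneity estimate.

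The repair is to replace $Z_{G_p}(a)$ by the subgroup $G_a^0=\{g\in G_p\mid \{a^nga^{-n}\}_{n\in\mathbb{Z}}\text{ is bounded}\}$ (a Levi-type subgroup whose Lie algebra is $\mathfrak{z}$), for which $U^-\,G_a^0\,U^+$ is open and the factorization is unique; but then the input you assert as standard --- an identity-neighborhood base of compact open subgroups $B^0\subset G_a^0$ with $aB^0a^{-1}=B^0$ --- is no longer automatic (it is trivial only for the centralizer) and must be proved, e.g.\ by exhibiting an $\Ad_a$-invariant lattice in $\mathfrak{z}$. That integral-structure statement, uniformly over the non-split case, is exactly what the paper's appendix proof is organized around: it builds Galois-stable $\mathcal{O}_{\mathbb{L}}$-eigenlattices $L_{w,[\lambda]}$, descends them to $\mathbb{Z}_p$-lattices $L_{[\lambda]}$ with ${\Ad}_{a^{e}}(L_{[\lambda]})=F_{[\lambda]}L_{[\lambda]}$, and in particular gets $\Ad_a$-invariance on the modulus-one part for free, after which the volume estimate is the same determinant computation you perform (with the same citation $h_m(a)=-\log\det|{\Ad}_a\restriction_{\mathfrak{g}_-}|_p$). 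So your outline is salvageable, and in the special case where every modulus-one eigenvalue equals $1$ (e.g.\ $a$ in a split torus with $|\alpha(a)|_p\neq1$ for all restricted roots) it already works as a clean alternative; in general the misidentified middle group and the unproved $a$-invariant small subgroups of it are where the actual content lies.
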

\begin{proof}
It suffices to produce an $(a,h_m(a))$-homogeneous base for the topology of $G_p$. In order to conjugate by $a$ instead of $a^{-1}$ we prove the theorem for $a^{-1}$-Bowen balls. Evidently, $B_{a^{-1}}^{(s,t)}=B_{a}^{(-t,-s)}$ and $h_m(a^{-1})=h_m(a)$.

We consider $\mathbf{G}$ as defined over $\mathbb{Q}_p$, i.e.\  replace $\mathbf{G}$ with its extension of scalars to $\mathbb{Q}_p$.
Let $\mathbf{A}<\mathbf{G}$ be a maximal torus whose $\mathbb{Q}_p$ points include $a$ and denote by $\mathbb{L}$ the splitting field of $\mathbf{A}$, it is a finite Galois field extension of $\mathbb{Q}_p$. Let $\mathfrak{G}\coloneqq \Gal(\mathbb{L}/\mathbb{Q}_p)$ and denote by $e$ the ramification index of $\mathbb{L}/\mathbb{Q}_p$. Let $w$ be the single place of $\mathbb{L}$ extending $p$. 

Let $\mathfrak{g}$ be the Lie algebra $\mathbf{G}$ and denote $\mathfrak{g}_p\coloneqq \mathfrak{g}(\mathbb{Q}_p)$ and $\mathfrak{g}_w\coloneqq \mathfrak{g}(\mathbb{L})$. The $\mathbb{L}$-vector space $\mathfrak{g}_w$ is equipped with a Galois structure for $\mathfrak{G}$ such that $\mathbb{Q}_p$-vector space $\mathfrak{g}_p$ is the set of fixed points for the action of $\mathfrak{G}$.

Consider the decomposition of $\mathfrak{g}_w$ into $\Ad_a$-eigenspaces with eigenvalue $\lambda$
\begin{equation*}
\mathfrak{g}_w=\bigoplus_{\lambda} \mathfrak{g}_{w,\lambda}
\end{equation*}
Let $[\lambda]$ be the Galois orbit of $\lambda$. Denote the sum of all eigenspaces with eigenvalue Galois conjugate to $\lambda$ by
\begin{equation*}
\mathfrak{g}_{w,[\lambda]}=\bigoplus_{\sigma\in\mathfrak{G}} \mathfrak{g}_{w,\sigma(\lambda)}
\end{equation*}
The space $\mathfrak{g}_{w,[\lambda]}<\mathfrak{g}_w$ is $\mathfrak{G}$-invariant hence it defined over $\mathbb{Q}_p$, let $\mathfrak{g}_{[\lambda]}<\mathfrak{g}_p$ be the subspace of $\mathfrak{G}$ fixed vectors, then $\mathfrak{g}_{[\lambda]}\otimes_{\mathbb{Q}_p} \mathbb{L}=\mathfrak{g}_{w,[\lambda]}$.

Let $X_{1,[\lambda]},\ldots,X_{k,[\lambda]}$ be a $\mathfrak{G}$-stable eigenbase for $\mathfrak{g}_{w,[\lambda]}$. We construct an $\mathcal{O}_\mathbb{L}$ lattice in $\mathfrak{g}_{w,[\lambda]}$
\begin{equation*}
L_{w,[\lambda]}=\oplus_{i=1}^k X_{i,[\lambda]} \mathcal{O}_\mathbb{L}
\end{equation*}
If $e$ is the ramification index then there exists $F_{[\lambda]}\in\mathbb{Q}_p$ such that $\lambda^e=F_{[\lambda]} \omega$ for some $\omega\in\mathcal{O}_\mathbb{L}^\times$. Any $\sigma\in\mathfrak{G}$ acts by $\sigma(\lambda^e)=F_{[\lambda]} \sigma(\omega)$ with $\sigma(\omega)\in\mathcal{O}_\mathbb{L}^\times$. Thus for all $n\in\mathbb{Z}$
\begin{equation*}
{\Ad}_{a^{en}} (L_{w,[\lambda]})= F_{[\lambda]}^n L_{w,[\lambda]}
\end{equation*}

The lattice $L_{w,[\lambda]}$ is invariant under $\mathfrak{G}$, let $L_{[\lambda]}$ be the set of $\mathfrak{G}$ fixed points in $L_{w,[\lambda]}$. By definition $L_{[\lambda]}=\mathfrak{g}_{[\lambda]}\cap L_{w,[\lambda]}$. This is an intersection of a compact open subset of $\mathfrak{g}_{w,[\lambda]}$ with a subspace, hence $L_{[\lambda]}$
is a compact open subset of $\mathfrak{g}_{[\lambda]}$. Moreover, $L_{[\lambda]}$ is clearly a $\mathbb{Z}_p$ submodule of the finitely generated $\mathbb{Z}_p$ module $L_{w,[\lambda]}$. Because $\mathbb{Z}_p$ is Noetherian we conclude that $L_{[\lambda]}$ is a lattice of full rank in $\mathfrak{g}_{[\lambda]}$.

We now show for all $n\in\mathbb{Z}$ that ${\Ad}_{a^{en}} (L_{[\lambda]})= F_{[\lambda]}^n L_{[\lambda]}$. Fix $n\in\mathbb{Z}$. The two $\mathbb{L}$-linear automorphisms ${\Ad}_{a^{en}}$ and $F_{[\lambda]}^n \Id$ of $\mathfrak{g}_{w,[\lambda]}$ are defined over $\mathbb{Q}_p$, hence ${\Ad}_{a^{en}} (L_{[\lambda]})$ and $F_{[\lambda]}^n L_{[\lambda]}$ are the sets of Galois fixed points in ${\Ad}_{a^{en}} (L_{w,[\lambda]})= F_{[\lambda]}^n L_{w,[\lambda]}$. This proves ${\Ad}_{a^{en}} (L_{[\lambda]})= F_{[\lambda]}^n L_{[\lambda]}$.

Let $|\cdot|_w$ be an absolute value corresponding to $w$ on $\mathbb{L}$. Galois conjugate eigenvalues have the same absolute value.
If $|\lambda|_w\geq 1$ then for any $t_2\geq t_1$ we have $\Ad_{a^{t_1}} L_{w,[\lambda]}\subseteq \Ad_{a^{t_2}} L_{w,[\lambda]}$, by considering 
$\mathfrak{G}$ fixed points this implies also 
$\Ad_{a^{t_1}} L_{[\lambda]}\subseteq \Ad_{a^{t_2}} L_{[\lambda]}$. Similarly, if $|\lambda|_w\leq 1$ then for all $s_2\leq s_1$ we have $\Ad_{a^{s_1}}L_{[\lambda]}\subseteq \Ad_{a^{s_2}} L_{[\lambda]}$. If $|\lambda|_w=1$ then $L_{[\lambda]}$ is invariant under $\Ad_a$.

Split the eigenvalue classes $[\lambda]$ into three groups
\begin{align*}
&\Lambda_+\coloneqq \left\{[\lambda] \mid |\lambda|_w>1 \right\}\\
&\Lambda_0\coloneqq \left\{[\lambda] \mid |\lambda|_w=1 \right\}\\
&\Lambda_-\coloneqq \left\{[\lambda] \mid |\lambda|_w<1 \right\}
\end{align*}
Define accordingly
\begin{align*}
&\mathfrak{L}_+ \coloneqq \bigoplus_{[\lambda]\in \Lambda_+} L_{[\lambda]}\\
&\mathfrak{L}_0 \coloneqq \bigoplus_{[\lambda]\in \Lambda_0} L_{[\lambda]}\\
&\mathfrak{L}_- \coloneqq \bigoplus_{[\lambda]\in \Lambda_-} L_{[\lambda]}
\end{align*}
Set also $\mathfrak{g}_+$, $\mathfrak{g}_0$ and $\mathfrak{g}_-$  to be the corresponding subspaces of $\mathfrak{g}_p$.

Fix $\mathfrak{U}\subset\mathfrak{g}_p$ a neighborhood of $0$ on which $\exp$ is an isometry.
Let $l\in\mathbb{N}$ be large enough so that $p^l L_{[\lambda]}\subseteq\mathfrak{U}$ for all $[\lambda]$. Define the zero neighborhood $\mathfrak{B}_l\coloneqq p^l\mathfrak{L}_+\oplus p^l\mathfrak{L}_0\oplus p^l\mathfrak{L}_- \subseteq\mathfrak{U}$ and set $B_l\coloneqq\exp\mathfrak{B}_l$. The sets $B_l$ for all $l$ large enough form a base for the topology around the identity and we now prove that they possess the required homogeneity property.

Using the inclusion relation between $a$-conjugates of lattices $L_{[\lambda]}$ and the action of $\Ad_{a^{en}}$ on these lattices we deduce
for $ s\leq es'\leq 0\leq et' \leq t$
\begin{align*}
\mathfrak{B}_l^{(s,t)}\coloneqq \bigcap_{s\leq n\leq t} {\Ad}_{a^{n}}\left(\mathfrak{B}_l\right)
\supseteq p^l\mathfrak{L}_0 \oplus p^l\bigoplus_{[\lambda]\in\Lambda^+} F_{[\lambda]}^{t'} L_{[\lambda]}  
\oplus p^l\bigoplus_{[\lambda]\in\Lambda^-} F_{[\lambda]}^{s'} L_{[\lambda]}
\end{align*}
hence 
\begin{align*}
\vol(\mathfrak{B}_l^{(s,t)})
&\geq \vol(\mathfrak{B}_{l}) \left(\prod_{[\lambda]\in\Lambda^+} |F_{[\lambda]}|_p\right)^{s'}
\left(\prod_{[\lambda]\in\Lambda^-} |F_{[\lambda]}|_p\right)^{t'}\\
&= \vol(\mathfrak{B}_{l})|\det{\Ad}_{a^e}\restriction_{\mathfrak{g}_+}|_p^{s'} |\det{\Ad}_{a^e}\restriction_{\mathfrak{g}_-}|_p^{t'}
\end{align*}

The determinants of the restriction of $\Ad_a$ to $\mathfrak{g}_+$ and $\mathfrak{g}_-$ are reciprocal because each character of $\mathbf{A}$ that is a root has its negative as a root as well. Fixing $s\leq0\leq t$ and taking $s',t'$ extremal we have $\vol(\mathfrak{B}_l^{(s,t)})\geq \vol(\mathfrak{B}_{l}) \exp\left[(t-s+2e)\log\det|\Ad_a\restriction_{\mathfrak{g}_-}|_p\right]$. Because $\exp$ is an isometry on $\mathfrak{U}$ this implies a similar volume estimate for $B_l^{(s,t)}$.

By \cite[\S 7.9]{Pisa} one has $h_m(a)=-\log\det|\Ad_a\restriction_{\mathfrak{g}_-}|_p$.
\end{proof}
\begin{remark}
It is easy to deduce from the proof a lower bound on $B_l^{(s,t)}$ with the same exponent $-\log\det|\Ad_a\restriction_{\mathfrak{g}_-}|_p$. Using Theorem \ref{thm:bk-second-countable} and \cite[Proposition 3.2]{ELMVPeriodic} that would imply $h_m(a)=-\log\det|\Ad_a\restriction_{\mathfrak{g}_-}|_p$ without using leafwise measure as in  \cite{Pisa}.
\end{remark}

\bibliographystyle{alpha}
\bibliography{large_deviations}

\end{document}